\newcommand{\field}[1]{\mathbb{#1}}
\newcommand{\R}{\field{R}}
\newcommand{\E}{\field{E}}
\newcommand{\p}{\field{P}}
\newcommand{\sign}{\mathrm{sign}}
\newcommand{\var}{\mathrm{Var}}
\newtheorem{theorem}{Theorem}
\newtheorem{proposition}{Proposition}
\newtheorem{definition}{Definition}
\newtheorem{corollary}{Corollary}
\newtheorem{remark}{Remark}
\numberwithin{equation}{section}
\title{\bf{Mass Transportation Proofs of Free Functional Inequalities, and Free Poincar\'e Inequalities}}
\author{Michel Ledoux \\ Institut de Math\'ematiques de Toulouse, Universit\'e de Toulouse, F-31062 Toulouse, France 
 \\ and \\ Ionel Popescu\\ Georgia Institute of Technology, 686 Cherry Street, Atlanta GA, 30332, USA \\ IMAR
21, Calea Grivitei Street 010702-Bucharest, Sector 1
Romania }
\date{} 
\begin{document}

 \maketitle

 \abstract{This work is devoted to direct mass transportation proofs of families of functional inequalities in the context of one-dimensional free probability, avoiding random matrix approximation. The inequalities include the free form of the transportation, Log-Sobolev, HWI interpolation and Brunn-Minkowski inequalities for strictly convex potentials. Sharp constants and some extended versions are put forward. The paper also addresses two versions of free Poincar\'e inequalities and their interpretation in terms of spectral properties of Jacobi operators. The last part
establishes the corresponding inequalities for measures on $\R_{+}$ with the reference example of
the Marcenko-Pastur distribution.}

\section{Introduction}

A distinguished role in the world of functional inequalities is played by the logarithmic Sobolev (Log-Sobolev) inequality and the Talagrand or transportation cost inequality. There is an extensive literature dedicated to these inequalities in the classical setting of Euclidean and Riemannian spaces (cf. e.g. \cite {Bakry}, \cite{L2}, \cite {Villani2}, \cite {Wang}).

Given a probability measure $\nu$ on $\R^{d}$,  the transportation cost inequality states that for some
$\rho >0$ and any other probability measure $\mu$ on $\R^{d}$,
\[\tag{$T(\rho)$}
\rho \, W_{2}^{2}(\mu,\nu)\le E(\mu|\nu).
\]
Here  $W_{2}(\mu,\nu)$ is the Wasserstein distance between $\mu $ and $\nu$ of finite second moment
defined by 
\[
W_{2}(\mu,\nu)=\inf_{\pi\in\Pi(\mu,\nu)}\left( \iint |x-y|^{2}\pi(dx,dy) \right) ^{1/2}
\]
with $\Pi(\mu,\nu)$ denoting the set of probability measures on $\R^{2d}$ with marginals $\mu$ and $\nu$ and 
\[
E(\mu|\nu)=\int \log\frac{d\mu}{d\nu}\, d\mu
\] 
is the relative entropy of $\mu$ with respect to $\nu$ if $\mu<<\nu$ and $+\infty$ otherwise.  The Log-Sobolev inequality is that for any $\mu$
\[\tag{$LSI(\rho)$}
E(\mu|\nu)\le \frac{1}{2\rho} \, I(\mu|\nu)
\]
where
\[
I(\mu|\nu)=\int\Big|\nabla \log \frac{d\mu}{d\nu} \Big|^{2}d\mu
\]
is the Fisher information of $\mu $ with respect to $\nu$ which is defined in the case $\mu<<\nu$ with $\frac{d\mu}{d\nu}$ being differentiable.
A more subtle inequality is the HWI inequality relating entropy ( notice that $E(\mu|\nu)$ is $H(\mu|\nu)$ in \cite{OV} which explains the H), Wasserstein distance W,  and Fisher information I
\[\tag{$HWI(\rho)$}
E(\mu|\nu)\le \sqrt{I(\mu|\nu)} \, W_{2}(\mu,\nu)-\frac{\rho}{2} \,  W_{2}^{2}(\mu,\nu).
\]
Poincar\'e's inequality in this classical context is that for any compactly supported and smooth function $\psi$
on $ \R^d$,  
\[\tag{$P(\rho)$} 
\rho \, \var_{\mu}(\psi)\le \int |\nabla \psi|^{2}\mu (dx)
\]
where $ \var_{\mu}(\psi) = \int \psi^2(x) \mu (dx) - \big ( \int \psi (x) \mu (dx) )\big )^2$ is the variance
of $\psi $ with respect to $\mu$.

Starting with Gaussian measures (\cite{Gross}, \cite{Tal}), these inequalities were established
for measures on $\R^d$ with strictly convex potentials by the Bakry-\'Emery criterion (\cite{Bakry}, \cite{L2},
\cite {Villani2}, \cite {Wang}).
More precisely, if $\nu(dx)=e^{-V(x)}dx$, with $V(x)-\rho |x|^{2}$ convex on $\R^{d}$ for some $\rho>0$, both $T(\rho)$ and  $LSI(\rho)$  hold true.
Otto and Villani generated interest in this 
topic through their remarkable paper \cite{OV}, in which they showed that the logarithmic 
Sobolev inequality implies the trasportation inequality, in a rather general setting.  This connection was actually put further through the stronger $HWI(\rho)$ inequality,
which was shown in \cite{OV} to be valid in the case $V(x)-\rho|x|^{2}$ is convex for some $\rho\in\R$,
When $\rho > 0$,  $LSI(\rho)$ is a consequence of $HWI(\rho)$. 
Subsequently the main result from \cite{OV} was simplified and extended, for example \cite{BL2} and recently \cite{Gozlan} to mention only two sources. Another interesting connection in these families
of functional inequalities is that any of $T(\rho)$, $LSI(\rho)$ or $HWI(\rho)$ imply the Poincar\'e
inequality $P(\rho)$.  

The work \cite {OV} by Otto and Villani input in a powerful way the use of mass transportation ideas in the context of functional inequalities.  Starting from this, Cordero-Erausquin used in \cite{Eras} direct convexity arguments combined with mass transport methods to reprove the Log-Sobolev, transportation and HWI inequalities for measures with strictly convex potentials. The strategy is going back to the original approach of \cite{Tal} to the transportation inequality
(see also \cite{Blower}).

In the world of free probability, as it was shown by Ben Arous and Guionnet in \cite{BAG1}, one can realize the free entropy as the rate function of the large deviations for the distribution of eigenvalues of some $n\times n$ complex random matrix ensembles (see also \cite{J}).   To wit a little bit here, let $V:\R\to\R$ be a nice function with enough growth at infinity and define the probability distribution 
\[
\p_{n}(dM)=\frac{1}{Z_{n}} \, e^{-n\mathrm{Tr}_{n}(V(M))}dM
\]
on the set $\mathcal{H}_{n}$ of  complex Hermitian $n\times n$ matrices where $dM$ is the Lebesgue measure on $\mathcal{H}_{n}$.  For a matrix $M$, let
$\mu_{n}(M)= \frac{1}{n}\sum_{k=1}^{n}\delta_{\lambda_{k}(M)}$ be the distribution of eigenvalues of $M$.  These are random variables with values in $\mathcal{P}(\R)$, the set of probability measures on $\R$ which converge almost surely to a non-random measure $\mu_{V}$ on $\R$.   For a measure $\mu$ on $\R$, its the logarithmic energy with external field $V$ is defined by
\[
E(\mu)=\int V(x)\mu(dx)-\iint \log|x-y| \, \mu(dx)\mu(dy).
\]
The minimizer of $E(\mu)$ over all probability measures on $\R$ is exactly the measure $\mu_{V}$.  From \cite{BAG1} we learned that the distributions of $\{\mu_{n}\}_{n\ge1}$ under $\p_{n}$ satisfy a large deviations principle with scaling $n^{2}$ and rate function given by 
\[
R(\mu)=E(\mu)-E(\mu_{V})
\]
The example of the quadratic
potential $V(x) = x^2$ defining the paradigmatic Gaussian Unitary Ensemble in random matrix theory gives rise
to the celebrated semicircular law as equilibrium measure.

Within this random matrix framework, if $V(x)-\rho x^{2}$ is smooth and convex for some $\rho>0$, then the function $\phi(M)=\mathrm{Tr}_{n}(V(M))$ is strongly convex ($\Phi(M)-n\rho |M|^{2}$ is convex) on $\R^{n^{2}}=\mathcal{H}_{n}$. An application of the classical $LSI(n\rho)$ on $\mathcal{H}_{n}$ for large $n$ was used by Biane \cite{Biane2} to prove a Log-Sobolev inequality in the context of one-dimensional free probability which holds (cf. \cite{HPU1}) in the following form
\begin{equation}\label{e:i1}
 E(\mu)-E(\mu_{V})\le \frac{1}{4\rho} \, I(\mu)
\end{equation}
for any probability measure $\mu$ on $\R$ whose density with respect to the Lebegue measure is in $L^{3}(\R)$, where 
\[
I(\mu)=\int\big (H\mu(x)-V'(x) \big )^{2}\mu(dx)
\]
with $H\mu= 2\int \frac{1}{x-y}\mu(dx)$ being the Hilbert transform of $\mu$.  

More precisely, Biane and Voiculescu used the free Ornstein Uhlenbeck 
process and the complex Burger equation.  Using the large random matrix strategy,  Hiai  Petz and Ueda \cite{HPU1} reproved and extended the result of Biane and Voiculescu in the following form.  If $V(x)-\rho x^{2}$ is convex for some $\rho>0$, then for every probability measure $\mu $ on $\R$,
 \begin{equation}\label{e:i2}
 \rho \, W_{2}^{2}(\mu,\mu_{V})\le E(\mu)-E(\mu_{V}).
 \end{equation}
 
Later, the first author \cite{L} gave a simpler proof of \eqref{e:i1} and \eqref{e:i2} based on a free version of the geometric Brunn-Minkowski inequality obtained as a random matrix limiting case of its classical counterpart. He also showed the free analog of the Otto-Villani theorem indicating that the free Log-Sobolev inequality implies the free transportation inequality \eqref{e:i2}.

The first scope of this paper is to provide direct proofs of the preceding functional inequalities in free probability without random matrix approximation. The second author of this paper in \cite{popescu} gave a simple proof of the transportation inequality \eqref{e:i2} on the same line of ideas as in \cite{Tal} for the classical case where random matrix theory is entirely avoided.   

In this paper, following the approach of Cordero-Erausquin \cite{Eras} (see also \cite {Blower}), we use a combination of mass transport and convex analysis which apply to strictly convex potentials.   The methods allow us besides to enlarge the class of potentials under consideration, in particular in instances which lack a proper random matrix approximation. For example, we cover potentials $V$ on the line such that $V(x)-\rho |x|^{p}$ is convex for some $\rho>0$ and $p>1$ as well as a class of bounded perturbations of convex potentials.   Using this approach, we present here an HWI free inequality for various cases of potentials. For the case $V(x)-\rho x^{2}$ convex for some $\rho\in\R$, this is
\begin{equation}\label{e:i3}
E(\mu)-E(\mu_{V})  \le \sqrt{I(\mu)} \, W_{2}(\mu,\mu_{V})-\rho \, W_{2}^{2}(\mu,\mu_{V}).  
\end{equation}
Also a Brunn-Minkovski inequality receives a direct proof as well.  

One interesting byproduct of our method is that some constants may be shown to be sharp.  For the case of a quadratic $V$, equations \eqref{e:i1},  \eqref{e:i2} and \eqref{e:i3} are sharp.  

Another topic discussed here in Section~\ref{potindep} is a free form of the transportation inequality which does not depend on the potential and that might be thought of as a version of the celebrated Pinsker inequality comparing total variation distance and entropy between probability measures.   As opposed to the classical case, the free counterpart is more delicate.    
  
The second part of this work is devoted to free one-dimensional Poincar\'e inequalities. Using random matrix approximations and the classical Poincar\'e inequality,  we first give an ansatz to what could be a possible Poincar\'e inequality in the free probability world.  In the case of $V(x)-\rho x^{2}$ convex for some $\rho>0$,  such that the measure $\mu_{V}$ has support $[-1,1]$, this states as,
\begin{equation}\label{e:i4}
\int \phi'(x)^{2}\mu_{V}(dx)\ge \frac{\rho}{2\pi^{2}}\int_{-1}^{1}\int_{-1}^{1}\left(\frac{\phi(x)-\phi(y)}{x-y}\right)^{2}\frac{1-xy}{\sqrt{1-x^{2}}\sqrt{1-y^{2}}} \, dxdy,\end{equation}
for any smooth function $\phi$ on the interval $[-1,1]$.    

There is also a second version of the Poincar\'e which is discussed in \cite{Biane2} for the case of the semicircular law. This inequality has a natural meaning in the context of free probability as the derivative $\nabla \phi$ of a function from the classical $P(\rho)$ is replaced by the noncommutative derivative $\frac{\phi(x)-\phi(y)}{x-y}$, and thus our second version takes the form
\begin{equation}\label{e:i5}
\iint \left( \frac{\phi(x)-\phi(y)}{x-y}\right)^{2}\mu(dx)\mu(dy)\ge C\, \var_\mu (\phi)
\quad\text{for every}\: \: \phi\in C_{0}^{1}(\R).
\end{equation}
As opposed to \eqref{e:i4} which requires certain conditions on the measure $\mu_{V}$,  it turns out that \eqref{e:i5} is always satisfied for any compactly supported measure $\mu$ with some constant.  As was shown in \cite{Biane2} for the semicircular law, one can completely characterize the distribution in terms of the constant $C$.      

After the use of convexity, inequality \eqref{e:i4}  may actually be interpreted as a spectral gap as follows.  On $L^{2}\left(\frac{\mathbbm{1}_{[-2,2]}(x)dx}{\sqrt{4-x^{2}}}\right)$ take the Jacobi operator 
\[
Lf=-(1-x^{2})f''(x)+xf'(x)
\]
and the counting number operator defined by
 \[
 NT_{n}=nT_{n}
 \] 
where $T_{n}$ are the Chebyshev polynomials of the first kind, which are orthogonal in $L^{2}\left(\frac{\mathbbm{1}_{[-2,2]}(x)dx}{\sqrt{4-x^{2}}}\right)$.  Then, \eqref{e:i4} for $V(x)=x^{2}/2$ is equivalent to 
\[
L \ge N.
\]

Inequality \eqref{e:i5} in the case of $V(x)=x^{2}/2$ can also be seen as the spectral gap for the counting number operator on $L^{2}\left(\mathbbm{1}_{[-2,2]}(x)\sqrt{4-x^{2}}dx\right)$ with respect to the basis given by the Chebyshev polynomials of second kind.  A more general situation is discussed in Section 9 which includes both versions of the Poincar\'e inequalities.

As we mentioned already, in the classical setting,  the Log-Sobolev and the transportation inequality imply the Poincar\'e inequalities.   We do not have a satisfactory picture of these implications in the free context, for any of the two versions of the Poincar\'e inequality discussed here.

In the final part, we investigate the preceding families of functional inequalities for probability measures supported on the positive real axis. The random matrix context is the one of Wishart ensembles with reference measure the Marcenko-Pastur distribution as opposed to the semicircular law, and the free functional inequalities correspond formally to the case of potentials $V(x)=rx-s\log(x)$ for $r > 0$, $s \geq 0$ on $\R_{+}$. Using the mass transportation method, we prove transportation, Log-Sobolev and HWI inequalities which were not investigated previously. A version of the Poincar\'e inequality is also discussed. 
 
The structure of the paper is as follows.  Sections 2, 4, 5 and 6 deal with the mass transportation proofs of respectively the transportation, Log-Sobolev, HWI and Brunn-Minkowski inequalities. Section 3 studies transportation inequalities which involve some metric on the probabilities and which are independent of the potential $V$. Sections 7 and 8 are devoted to the two versions of the Poincar\'e inequality in the free context, related in Section 9 through Jacobi operators. Section 10 investigates the preceding inequalities with respect to the Marcenko-Pastur distribution and its convex extensions.

\section{Transportation Inequality}
Throughout this paper we consider lower semicontinuous potentials $V:\R\to\R$ such that 
\begin{equation}\label{c}
\lim_{|x|\to\infty}\big (V(x) - 2\log |x| \big )=\infty.
\end{equation}
For a given Borel set $\Gamma\subset \R$,  denote by $\mathcal{P}(\Gamma)$ the set of probability measures supported on $\Gamma$.  

The logarithmic energy with external potential $V$ is defined by
\[
E_{V}(\mu):=\int V(x)\mu(dx)-\iint \log|x-y| \, \mu(dx)\mu(dy).
\]
whenever both integrals exist and have finite values.  In particular for measures $\mu$ which have atoms, $E_{V}(\mu)=+\infty$ because the second integral is $+\infty$.  

It is known (see \cite{ST} or \cite{Deift1}) that under condition \eqref{c} there exists a unique minimizer of $E_{V}$ in the set $\mathcal{P}(\R)$ and the solution $\mu_{V}$ is  compactly supported.  The variational characterization of the minimizer $\mu_{V}$ (cf. \cite[Theorem 1.3]{ST}) is that for a constant $C\in\R$, 
\begin{equation}\label{VC1}
\begin{split}
V(x)&\ge  2\int \log|x-y| \, \mu_{V}(dy)+ C \quad\text{for quasi-every}\:x\in \R   \\ 
V(x)&= 2\int \log|x-y| \, \mu_{V}(dy)+ C  \quad\text{for quasi-every}\: \: x\in \mathrm{supp} (\mu_{V}), 
\end{split}
\end{equation}
where $\mathrm{supp}(\mu_V)$ stands for the support of $\mu$.  If $\mu$ is such that $E_{V}(\mu)<\infty$, then Borel quasi-everywhere sets have $\mu$ measure $0$ and thus the properties above hold almost surely with respect to $\mu$.

For simplicity of the notation, we will drop the subscript $V$ from $E_{V}$ unless the dependence of the potential has to be highlighted.   

Now we summarize some known facts about the equilibrium measure and its support as one can easily deduce them from \cite[Chapter IV]{ST} and \cite[Chapter 6]{Deift1}. 

\begin{theorem}  
\begin{enumerate}
\item Let $V$ be a potential satisfying \eqref{c} and $\alpha\ne0,\beta\in\R$.  Set $V_{\alpha,\beta}(x)=V(\alpha x+\beta)$.  Then, $\mu_{V_{\alpha,\beta}}=((id-\beta)/\alpha)_{\#}\mu_{V}$ and 
\begin{equation}\label{e:141}
E_{V}(\mu_{V})=E_{V_{\alpha,\beta}}(\mu_{V_{\alpha,\beta}})-\log|\alpha|.
\end{equation}
\item If $V$ is convex satisfying \eqref{c}, then the support of the  equilibrium measure $\mu_{V}$ consists of one interval $[a,b]$ where $a$ and $b$ solve the system
\begin{equation}\label{e:0}
\begin{cases}
\frac{1}{2\pi}\int_{a}^{b}V'(x)\sqrt{\frac{x-a}{b-x}}dx=1 \\ 
\frac{1}{2\pi}\int_{a}^{b}V'(x)\sqrt{\frac{b-x}{x-a}}dx=-1.
\end{cases}
\end{equation}
\item Let $V$ be either a  $C^{2}$ satisfying \eqref{c} whose equilibrium measure has support $[a,b]$.  Then the equilibrium measure $\mu_{V}$ has density $g(x)$, given by 
\begin{equation}\label{e:16}
g(x)=\mathbbm{1}_{[a,b]}(x)\frac{\sqrt{(x-a)(b-x)}}{2\pi^{2}}\int_{a}^{b}\frac{V'(y)-V'(x)}{(y-x)\sqrt{(y-a)(b-y)}} \, dy.
\end{equation}
\item If $V$ is $C^{2}$, then 
\begin{equation}\label{vc}
V'(x)=p.v. \int\frac{2}{x-y} \, \mu_{V}(dx)\quad\text{for}\: \mu_{V}-a.s.\:\text{all}\:x\in \mathrm{supp}(\mu_{V}),
\end{equation}
where $p.v.$ stands for the principal value integral.  Notice that the principal value makes sense as $\mu_{V}$ has a continuous density.  

\end{enumerate}
\end{theorem}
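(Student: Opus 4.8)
The plan is to dispatch the four items separately; the first and last are essentially bookkeeping, whereas the middle two rest on the classical theory of the finite Hilbert transform. For item~(1), write $S(x)=\alpha x+\beta$ and, for any probability measure $\nu$ on $\R$, put $\mu=S_{\#}\nu$. Substituting $u=S(x)$, $v=S(y)$ in the definition of the energy and using $\log|x-y|=\log|u-v|-\log|\alpha|$ gives $E_{V_{\alpha,\beta}}(\nu)=E_{V}(\mu)+\log|\alpha|$. Since $\nu\mapsto S_{\#}\nu$ is a bijection of $\mathcal{P}(\R)$, the two functionals have minimum values differing by $\log|\alpha|$ and minimizers that correspond under $S_{\#}$; this is exactly $\mu_{V_{\alpha,\beta}}=((\mathrm{id}-\beta)/\alpha)_{\#}\mu_{V}$ together with \eqref{e:141}. (Condition \eqref{c} is stable under affine changes of variable, so $V_{\alpha,\beta}$ is again admissible.)

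For the connectedness claim in item~(2) I would use \eqref{VC1}. Set $F(x)=2\int\log|x-y|\,\mu_{V}(dy)+C-V(x)$, so $F\le 0$ quasi-everywhere and $F=0$ on $\mathrm{supp}(\mu_{V})$. On any open interval disjoint from the support, $x\mapsto 2\int\log|x-y|\,\mu_{V}(dy)$ is smooth with second derivative $-2\int(x-y)^{-2}\,\mu_{V}(dy)<0$, while $-V$ is concave; hence $F$ is strictly concave there. A bounded gap $(c,d)$ in the support with $c,d\in\mathrm{supp}(\mu_{V})$ would give $F(c)=F(d)=0$, forcing $F>0$ on $(c,d)$ and contradicting $F\le 0$. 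So $\mathrm{supp}(\mu_{V})=[a,b]$ for some $a<b$.

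The endpoint equations \eqref{e:0} and the density formula \eqref{e:16} I would obtain together from the Stieltjes transform $G(z)=\int(z-y)^{-1}\mu_{V}(dy)$, analytic off $[a,b]$ with $G(z)=z^{-1}+O(z^{-2})$ at infinity. Differentiating the equality in \eqref{VC1} along $(a,b)$ gives the boundary relation $G_{+}+G_{-}=V'$ on $(a,b)$; solving this scalar Riemann--Hilbert problem with $R(z)=\sqrt{(z-a)(z-b)}$ (the branch $\sim z$ at infinity) yields $G$ explicitly in terms of $V'$ and $R$. The density is the jump of $G$ across the cut, $g(x)=-\tfrac{1}{2\pi i}\bigl(G_{+}(x)-G_{-}(x)\bigr)$, which comes out as $g(x)=\tfrac{\sqrt{(x-a)(b-x)}}{2\pi^{2}}\,\mathrm{p.v.}\!\int_{a}^{b}\tfrac{V'(y)\,dy}{(y-x)\sqrt{(y-a)(b-y)}}$. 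Using the classical identity $\mathrm{p.v.}\!\int_{a}^{b}\tfrac{dy}{(y-x)\sqrt{(y-a)(b-y)}}=0$ for $x\in(a,b)$, one subtracts $V'(x)$ times this vanishing integral, turning the numerator into $V'(y)-V'(x)$; since $V\in C^{2}$ the quotient $\tfrac{V'(y)-V'(x)}{y-x}$ extends continuously across $y=x$, so \eqref{e:16} is an ordinary integral, and its nonnegativity is checked once \eqref{e:0} is in hand. Finally, expanding the explicit $G$ at infinity and matching the $z^{-1}$ coefficient, together with the vanishing of the $z^{0}$ term, gives two normalization identities that become \eqref{e:0} after the elementary rewriting $\sqrt{\tfrac{x-a}{b-x}}\pm\sqrt{\tfrac{b-x}{x-a}}=\tfrac{(x-a)\pm(b-x)}{\sqrt{(x-a)(b-x)}}$.

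Item~(4) is then immediate: by item~(3), on the interior of $\mathrm{supp}(\mu_{V})$ the measure $\mu_{V}$ has a continuous density, so the equality $V(x)=2\int\log|x-y|\,\mu_{V}(dy)+C$ from \eqref{VC1} may be differentiated, the derivative of the singular kernel being read as a principal value; this is \eqref{vc}. The main obstacle throughout is the Riemann--Hilbert step underlying items~(2) and~(3): one must invert $G_{+}+G_{-}=V'$ (equivalently, invert the finite Hilbert transform on $(a,b)$) while selecting the solution with the correct square-root vanishing at $a$ and $b$ so that $\mu_{V}$ is a genuine probability measure, and it is precisely here that \eqref{e:0} enters as the solvability and normalization condition. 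Everything else is a change of variables, elementary calculus, and the standard principal-value identity above; the complete details are in \cite[Chapter IV]{ST} and \cite[Chapter 6]{Deift1}.
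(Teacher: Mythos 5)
Your proposal is correct and follows the same route the paper intends: the paper offers no proof of this theorem, presenting it as a summary of known facts from \cite[Chapter IV]{ST} and \cite[Chapter 6]{Deift1}, and your sketch is an accurate reconstruction of exactly those standard arguments (affine change of variables for (1), strict concavity of $2\int\log|x-y|\,\mu_V(dy)+C-V(x)$ on gaps of the support for (2), the scalar Riemann--Hilbert problem $G_++G_-=V'$ with the normalization at infinity for (2)--(3), and differentiation of the variational equality for (4)). Nothing further is needed.
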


We mention as a basic example that if $V(x) = \rho x^2$ is quadratic, then $\mu _V$ is the
semicircular law
\[
\mu_{V}(dx)=\mathbbm{1}_{[-\sqrt {2/\rho },\sqrt {2/\rho }]}(x) \sqrt{2\rho- \rho^{2} x^{2}}  \,  \frac{dx}{2\pi } \, .
\]

In this work, for $p \geq 1$,
we use $W_{p}(\mu,\nu)$ for the Wasserstein distance on the space of probability measures on $\R$ defined as
\begin{equation}\label{e:wass}
W_{p}(\mu,\nu)=\inf_{\pi\in\Pi(\mu,\nu)}\left( \iint |x-y|^{p}\pi(dx,dy) \right) ^{1/p}
\end{equation}
with $\Pi(\mu,\nu)$ denoting the set of probability measures on $\R^{2}$ with marginals $\mu$ and $\nu$.   Note here that if $\theta$ is the (non-decreasing)  transport map such that $\theta_{\#}\mu=\nu$, then 
\begin{equation}\label{e:w}
W_{p}^{p}(\mu,\nu) = \int \big |\theta(x)-x \big |^{p}\nu(dx).
\end{equation}
For a detailed discussion on this topic we refer the reader to \cite{Villani2}.

Our first result concerns the free version of the transportation cost inequality. As discussed in the introduction, the first assertion for strictly convex potentials was initially proved by large matrix approximation in \cite{HPU1}. The strategy of proof is inspired from \cite{Tal}, \cite{Blower} and \cite{Eras} (see \cite{popescu}).

\begin{theorem}[Transportation inequality]\label{thm:1}  
\begin{enumerate}
\item If $V$ is $C^{2}$ and $V(x)-\rho x^{2}$ is convex for some $\rho>0$, then for any probability measure $\mu$ on $\R$,
\begin{equation}\label{t:0}
\rho \, W_{2}^{2}(\mu,\mu_{V})\le E(\mu)-E(\mu_{V}).
\end{equation}
If $V(x)=\rho x^{2}$, then the equality in \eqref{t:0} is attained for measures $\mu=\theta_{\#}\mu_{V}$, with $\theta(x)=x+m$,  therefore the constant $\rho$ in front of $W_{2}^{2}(\mu,\mu_{V})$ is sharp. 
\item Assume that $V$ is $C^{2}$, convex and $V''(x)\ge \rho>0$ for all $|x|\ge r$. Then, there is a constant $C=C(r,\rho,\mu_{V},V)>0$, such that 
\begin{equation}\label{t:2}
C \, W_{2}^{2}(\mu,\mu_{V})\le E(\mu)-E(\mu_{V}).
\end{equation}
\item In the case $V$ is $C^{2}$ and  $V(x)-\rho |x|^{p}$ is convex for some real number $p>1$, then, for any probability measure $\mu$ on $\R$,
\begin{equation}\label{t:1}
c_{p} \rho \, W_{p}^{p}(\mu,\mu_{V})\le E(\mu)-E(\mu_{V})
\end{equation}
where $c_{p}=\inf_{x\in\R}\left( |1+x|^{p}-|x|^{p}-p \sign(x)|x|^{p-1}\right)>0$.  

\end{enumerate}
\end{theorem}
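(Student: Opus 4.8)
The plan is to derive all three inequalities from a single mass-transportation identity, following the convexity method of Cordero-Erausquin. Fix a probability measure $\mu$ and assume $E(\mu)<\infty$ (otherwise the right-hand sides are infinite and there is nothing to prove); by \eqref{c} this forces finite moments and no atoms. After a routine approximation of $\mu$ by measures with smooth density (using lower semicontinuity of $E$ and continuity of the $W_p$'s), I may assume $\mu$ regular enough that the non-decreasing transport map $\theta$ with $\theta_{\#}\mu_V=\mu$, and $f:=\theta-\mathrm{id}$, make the steps below licit, with $W_p^p(\mu,\mu_V)=\int|f(x)|^p\,\mu_V(dx)$ by \eqref{e:w}. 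Transporting both energies through $\theta$ gives
\[
E(\mu)-E(\mu_V)=\int\big(V(\theta(x))-V(x)\big)\,\mu_V(dx)-\iint\log\frac{|\theta(x)-\theta(y)|}{|x-y|}\,\mu_V(dx)\mu_V(dy).
\]

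Next I would treat the interaction term. Since $\theta$ is non-decreasing, $\theta(x)-\theta(y)$ and $x-y$ have the same sign, and the identity $-\log t=(1-t)+S(t)$ with $S(t):=t-1-\log t\ge0$ on $(0,\infty)$, applied to $t=\tfrac{\theta(x)-\theta(y)}{x-y}=1+\tfrac{f(x)-f(y)}{x-y}$, gives $-\log\frac{|\theta(x)-\theta(y)|}{|x-y|}=-\tfrac{f(x)-f(y)}{x-y}+S\!\big(\tfrac{\theta(x)-\theta(y)}{x-y}\big)$. Symmetrising the linear part and invoking \eqref{vc} (legitimate since $\mu_V$ has a continuous density) turns $\iint\tfrac{f(x)-f(y)}{x-y}\,\mu_V(dx)\mu_V(dy)$ into $\int f(x)V'(x)\,\mu_V(dx)$, so that
\[
E(\mu)-E(\mu_V)=\int\big(V(\theta(x))-V(x)-V'(x)(\theta(x)-x)\big)\mu_V(dx)+\iint S\!\Big(\tfrac{\theta(x)-\theta(y)}{x-y}\Big)\mu_V(dx)\mu_V(dy),
\]
a sum of two nonnegative terms (the first by convexity of $V$); this identity is the common engine for all three parts.

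For (1) and (3) I would discard the $S$-term and bound the first integrand pointwise: write $V=\rho U+W$ with $W$ convex and $U(x)=x^2$ in case (1), $U(x)=|x|^p$ in case (3). Convexity gives $W(\theta)-W(x)\ge W'(x)(\theta-x)$, and a direct Bregman-type computation gives $U(\theta)-U(x)-U'(x)(\theta-x)\ge(\theta-x)^2$ for $U=x^2$ and $\ge c_p|\theta-x|^p$ for $U=|x|^p$, with $c_p$ the stated infimum; since $V'=\rho U'+W'$, summing gives $V(\theta(x))-V(x)-V'(x)(\theta(x)-x)\ge\rho\big(U(\theta(x))-U(x)-U'(x)(\theta(x)-x)\big)$, which is $\ge\rho(\theta(x)-x)^2$, resp.\ $\ge c_p\rho|\theta(x)-x|^p$. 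Integrating against $\mu_V$ and using \eqref{e:w} yields \eqref{t:0} and \eqref{t:1}. For sharpness in \eqref{t:0} with $V=\rho x^2$: then $W\equiv0$, the Bregman inequality for $x^2$ is an equality, and for $\theta(x)=x+m$ one has $\theta(x)-\theta(y)=x-y$, so the $S$-term vanishes; hence all inequalities are equalities and $E(\mu)-E(\mu_V)=\rho\,W_2^2(\mu,\mu_V)$.

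Part (2) is the substantive case, $V$ being only convex, and is where I expect the main difficulty. Here $V(\theta(x))-V(x)-V'(x)(\theta(x)-x)=(\theta(x)-x)^2\int_0^1(1-t)V''\big(x+t(\theta(x)-x)\big)dt$ is $\ge\rho(\theta(x)-x)^2$ times a positive factor whenever a definite $(1-t)$-weighted proportion of the segment from $x$ to $\theta(x)$ lies outside $[-r,r]$, which happens once $|\theta(x)-x|$ or $|\theta(x)|$ exceeds a constant depending only on $r$ and $\mathrm{supp}(\mu_V)$; this controls the large-displacement part of $W_2^2$. On the complementary ``bulk'' set, where $x$ and $\theta(x)$ remain bounded, $S(t)\ge c(t-1)^2$ uniformly, so the surviving $S$-term dominates a constant multiple of the free Dirichlet form $\iint\big(\tfrac{f(x)-f(y)}{x-y}\big)^2\mu_V(dx)\mu_V(dy)$, which for the compactly supported $\mu_V$ is $\ge c'\var_{\mu_V}(f)$ by a free Poincar\'e inequality; feeding back the control of $\int f\,d\mu_V$ coming from the large-displacement regime should then recover $C\int f^2\,d\mu_V=C\,W_2^2(\mu,\mu_V)$ with $C=C(r,\rho,\mu_V,V)$. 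The hard part is precisely this last step: the two nonnegative terms each control only a portion of the transport cost, so producing one admissible constant requires a quantitative free Poincar\'e estimate for $\mu_V$ together with a careful dichotomy on the size of the displacement — whereas Parts (1) and (3) are just a short chain of convexity inequalities once the $V'$-cancellation through \eqref{vc} is in place, the only delicate point there being the regularisation of $\mu$ that justifies that cancellation.
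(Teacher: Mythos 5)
Your treatment of parts (1) and (3) is correct and is essentially the paper's argument: the same transport identity \eqref{e:1} obtained from \eqref{vc}, the same Bregman-type pointwise bounds for $x^2$ and $|x|^p$, the same discarding of the nonnegative logarithmic remainder, and the same translation example for sharpness. (The preliminary smoothing of $\mu$ you invoke is unnecessary: the transport map exists because $\mu_V$ has no atoms, and \eqref{vc} is used only on the $\mu_V$ side, so no regularity of $\mu$ is needed.)

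Part (2), which you correctly identify as the substantive case, is where your plan has genuine gaps. First, the claim that $S(t)=t-1-\log t\ge c(t-1)^2$ \emph{uniformly} on the bulk is false: $t-1=\frac{f(x)-f(y)}{x-y}$ is an unbounded quantity even when $f$ is bounded, since $\theta$ is merely non-decreasing and its difference quotients blow up wherever $\mu$ has gaps in its support, and $S(t)\sim t$ as $t\to\infty$ while $(t-1)^2\sim t^2$. The correct move (which the paper makes in \eqref{e:16'}) is to use that $s\mapsto s-\log(1+s)$ is even-dominated and increasing on $[0,\infty)$ to replace the denominator $x-y$ by $b-a$, after which the argument of $S$ is bounded on the bulk and a quadratic lower bound does hold; this yields directly $c\,\var_{\mu_V}(f)$ rather than passing through the noncommutative Dirichlet form. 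Second, and more seriously, the final step cannot be completed as you describe. You propose to recover $\int f^2\,d\mu_V$ from $\var_{\mu_V}(f)$ by ``feeding back the control of $\int f\,d\mu_V$ coming from the large-displacement regime.'' But for a constant displacement $f\equiv m$ with $0<|m|<R$ the variance vanishes, the logarithmic term vanishes identically, and the large-displacement set is empty, so neither of your two sources of positivity contributes anything, while $W_2^2(\mu,\mu_V)=m^2>0$. The only remaining coercivity is the strict convexity of $V$ somewhere on $\mathrm{supp}(\mu_V)=[a,b]$, and establishing this is a genuine lemma: the paper shows $V''\not\equiv 0$ on $[a,b]$ by observing that $V'\equiv K$ there would force $K(b-a)=2$ and $K(b-a)=-2$ simultaneously in the endpoint system \eqref{e:0}. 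One then needs a quantitative, $\psi$-uniform version of this (the paper's weighted Cauchy--Schwarz bound on $\bigl(\int f\,d\mu_V\bigr)^2$ with weight $1+\frac{1}{2c}\int_0^1V''(x+\tau f(x))(1-\tau)\,d\tau$, followed by the lower bound $\int_0^1V''(x+\tau f(x))(1-\tau)\,d\tau\ge w(x)$ with $w\not\equiv 0$). Without this non-degeneracy input and the mechanism to absorb the mean, your part (2) does not close.
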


\begin{proof} 
\begin{enumerate}
\item Since there is nothing to prove in the case $E(\mu)=\infty$, we assume that $E(\mu)<\infty$.   In this case we also have that the measure $\mu$ and $\mu_{V}$ both have second finite moments.   

Now we take the non-decreasing transportation  map $\theta$ such that $\theta_{\#}\mu_{V}=\mu$ which exists due to the lack of atoms of $\mu_{V}$. Using the transport map $\theta$, we first write
\begin{align}\label{e:1}
 E(\mu)-E(\mu_{V}) 
&= \int\big ( V(\theta(x))-V(x)-V'(x)(\theta(x)-x)\big )\mu_{V}(dx)\\ 
&\notag\quad+\iint \left(\frac{\theta(x)-\theta(y)}{x-y}-1- \log\frac{\theta(x)-\theta(y)}{x-y}\right)\mu_{V}(dx)\mu_{V}(dy)
\end{align}
where in between we used the variational equation \eqref{vc} to justify that 
\[
\int V'(x) \big (\theta(x)-x \big )\mu_{V}(dx)=2\iint \frac{\theta(x)-x}{x-y} \, \mu_{V}(dy)\mu_{V}(dx)
=\iint \frac{(\theta(x)-x)-(\theta(y)-y)}{x-y}\, \mu_{V}(dy)\mu_{V}(dx).
\]

Since $V(x)-\rho x^{2}$ is convex, for any $x,y$ the following holds
\[
V(y)-V(x)-V'(x)(y-x)\ge \rho \big (y^{2}-x^{2}-2x(y-x)\big )=\rho(y-x)^{2}.
\]
On the other hand since $a-1\ge \log (a)$ for any $a\ge 0$, equations \eqref{e:1} and \eqref{e:w} yield \eqref{t:0}.

In the case $V(x)=\rho x^{2}$  it is easy to see that for $\theta(x)=x+m$, all inequalities involved become equalities, thus we attain equality in \eqref{t:0} for translations of $\mu_{V}$.  

\item We start the proof with \eqref{e:1}, whereas this time we need to exploit the logarithmic term to get our inequality.   The idea is to use the strong convexity where $\psi(x):=\theta(x)-x$ takes large values and for small values of $\psi(x)$ we try to compensate this with the second integral of \eqref{e:1}.

Notice in the first place that by Taylor's theorem we have that 
\begin{equation}
V(y)-V(x)-V'(x)(y-x)=(y-x)^{2}\int_{0}^{1}V''\big ((1-\tau) x+ \tau y \big )(1-\tau)d\tau.
\end{equation}
Now, let us assume that the support of the equilibrium measure $\mu_{V}$ is $[a,b]$.  Next,  $V''(x)\ge0$ and $V''(x)\ge \rho$ for $|x|\ge r$, implies that for $|y|\ge 2r+2\max\{|a|,|b|\}$, we obtain that 
\[
V(y)-V(x)-V'(x)(y-x)\ge (y-x)^{2}\int_{1/2}^{1} V''\big ((1-\tau)x+\tau y\big )(1-\tau)d\tau \ge  \rho(y-x)^{2}/8 \quad\text{ for any}\: \:  x\in[a,b].  
\]
Now write $\theta(x)=x+\psi(x)$.  Thus using \eqref{e:1}, and denoting $R=2r+2\max\{|a|,|b| \}$ we continue with
\begin{align}\label{e:14}
\notag\int \big (V(\theta(x)) -V(x)-V'(x)(\theta(x)-x) \big )\mu_{V}(dx)
 & \ge  \frac{1}{2}\int \psi^{2}(x)\int_{0}^{1}V''\big (x+\tau \psi(x) \big )(1-\tau)d\tau\mu_{V}(dx)  \\
&  \ge \frac{\rho}{16}\int_{|\psi|\ge R}\psi^{2}(x)\mu_{V}(dx).  
\end{align}
This inequality provides a lower bound of the first term in \eqref{e:1}.   Further, it is not hard to check that 
\begin{align}\label{e:15}
\notag\int_{|\psi|\ge R}\psi^{2}(x)\mu_{V}(dx)
& = \frac{1}{2}\int \mathbbm{1}_{|\psi|\ge R}(x)\psi^{2}(x)\mu_{V}(dx)+\frac{1}{2}\int \mathbbm{1}_{|\psi|\ge R}(y)\psi^{2}(y)\mu_{V}(dy) \\ 
& \ge \frac{1}{8} \iint \mathbbm{1}_{|\psi(x)-\psi(y)|\ge 2R}(x,y) 
\big |\psi(x)-\psi(y) \big|^{2}\mu_{V}(dx)\mu_{V}(dy).
\end{align}

Now we treat the second integral on the left hand side of \eqref{e:1}.  Use that
$t-\log(1+t)\ge |t|-\log(1+|t|)$ for any $t>-1$ together with the fact that $t-\log(1+t)$ is an increasing function for $t\ge0$ to argue that 
\begin{align}\label{e:16'}
\notag\iint &\left(\frac{\psi(x)-\psi(y)}{x-y}- \log\left(1+\frac{\psi(x)-\psi(y)}{x-y}\right)\right)\mu_{V}(dx)\mu_{V}(dy) \\ &\ge \iint \left(\frac{|\psi(x)-\psi(y)|}{b-a}- \log\left(1+\frac{|\psi(x)-\psi(y)|}{b-a}\right)\right)\mu_{V}(dx)\mu_{V}(dy)
\end{align}

Further, for $s\ge 0$ and $u,v>0$ we have
\[
us^{2}+s-\log(1+s)\ge 
\begin{cases}
\frac{v-\log(1+v)}{v^{2}}s^{2} & 0\le s\le v\\
us^{2} &v\le s
\end{cases}
\ge \min\left\{u,\frac{v-\log(1+v)}{v^{2}}\right\}s^{2}.
\]
This inequality used for $u=\frac{\rho(b-a)^{2}}{128}$ and $v=\frac{2R}{b-a}$ in combination with \eqref{e:15} and \eqref{e:16'} yields for the choice of  $c=\min\{u,(v-\log(1+v))/v^{2}\}$ that
\begin{equation}\label{e:17}
\begin{split}
&\frac{\rho}{16}\int_{|\psi|\ge R}\psi^{2}(x)\mu_{V}(dx)+\iint \left(\frac{\psi(x)-\psi(y)}{x-y}- \log\left(1+\frac{\psi(x)-\psi(y)}{x-y}\right)\right)\mu_{V}(dx)\mu_{V}(dy) \\ 
&\quad \ge c\iint \big (\psi(x)-\psi(y)\big )^{2}\mu_{V}(dx)\mu_{V}(dy) 
=c\left[\int\psi^{2}(x)\mu_{V}(dx)-\left(\int \psi(x)\mu_{V}(dx) \right)^{2}\right].
\end{split}
\end{equation}
This shows that $E(\mu)-E(\mu_{V})$ is bounded below by a constant times the variance of $\psi$.    Notice that $W_{2}^{2}(\mu,\mu_{V})=\int \psi^{2}(x)\mu_{V}(dx)$ and in order to complete the proof we have to replace the variance of $\psi$ by the integral of $\psi^{2}$ with respect to $\mu_{V}$.  This boils down to estimating the $\mu_{V}$ integral of $\psi$ in terms of the integral of $\psi^{2}$. 

 To this end,  use Cauchy's inequality: 
\begin{align*}
\notag\left(\int \psi(x)\mu_{V}(dx) \right)^{2}
\le 
& \int \psi^{2}(x)\left(1 + \frac{1}{2c}\int_{0}^{1}V''\big (x+\tau\psi(x)\big )(1-\tau)d\tau \right)\mu_{V}(dx)\\ 
& \times\int\frac{1}{1 + \frac{1}{2c}\int_{0}^{1}V''(x+\tau\psi(x))(1-\tau)d\tau }\mu_{V}(dx).
\end{align*}
This inequality combined with equations \eqref{e:1}, \eqref{e:14} and \eqref{e:17}, results with 
\begin{align*}
E(\mu)-E(\mu_{V}) &\ge \int \psi^{2}(x)\left(c + \frac{1}{2}\int_{0}^{1}V''\big (x+\tau\psi(x)\big )(1-\tau)d\tau \right)\mu_{V}(dx)\\ 
& \quad \times\int\frac{\int_{0}^{1}V''(x+\tau\psi(x))(1-\tau)d\tau}{2c + \int_{0}^{1}V''(x+\tau\psi(x))
(1-\tau)d\tau } \, \mu_{V}(dx) \\ 
& \ge c\int\frac{\int_{0}^{1}V''(x+\tau\psi(x))(1-\tau)d\tau}{2c + \int_{0}^{1}V''(x+\tau\psi(x))(1-\tau)d\tau }
   \, \mu_{V}(dx) W_{2}^{2}(\mu,\mu_{V}),
\end{align*}
where here we used the convexity encoded into $V''\ge0$ and the fact that $W_{2}^{2}(\mu,\mu_{V})=\int \psi^{2}(x)\mu_{V}(dx)$ to get the lower bound of the first integral.  

From the previous inequality, it becomes clear that we are done as soon as we prove that the quantity in front of $W_{2}^{2}(\mu,\mu_{V})$ is bounded from below by a positive constant uniformly in $\psi$.   To carry this out,  notice that $V''$ can not be identically zero on $[a,b]$.  Indeed, if $V''$ were identically zero on $[a,b]$, then we would have that $V'(x)=K$ for all $x\in[a,b]$, and this plugged into equation \eqref{e:0}, yields that $K(b-a)=2$ and $K(b-a)=-2$, a system without a solution.  Therefore $V''$ is not identically $0$ on $[a,b]$.   
If $|\psi(x)|>R$, then $V''(x+\tau\psi(x))\ge\rho$ for $1/2\le \tau<1$, which implies $\int_{0}^{1}V''(x+\tau\psi(x))(1-\tau)d\tau\ge\rho/8$.   On the other hand, if $|\psi(x)|\le R$, then 
\[
\int_{0}^{1}V'' \big (x+\tau\psi(x) \big )(1-\tau)d\tau
\ge \int_{0}^{\delta}V'' \big (x+\tau\psi(x)\big )(1-\tau)d\tau
\ge \frac{\delta}{2} \, \inf_{|y-x|\le \delta R}V''(y)
\]
for all $0\le \delta\le 1$. Define
\[
w(x)=\sup_{\delta\in[0,1]}\min\left\{ \frac{\rho}{8} \, ,\frac{\delta}{2}\inf_{|y-x|\le \delta R}V''(y) \right\}.
\]
Since $V''$ is not identically $0$ on $[a,b]$, it follows that $w$ is not identically zero on $[a,b]$.  With this we obtain that 
\[
 \int_{0}^{1}V''\big (x+\tau\psi(x)\big )(1-\tau)d\tau\ge w(x) \ge0,
\]
 and then that 
 \[
 c\int\frac{\int_{0}^{1}V''(x+\tau\psi(x))(1-\tau)d\tau}{2c + \int_{0}^{1}V''(x+\tau\psi(x))(1-\tau)d\tau } \, \mu_{V}(dx) \ge C=\int \frac{cw(x)}{2c+w(x)} \, \mu_{V}(dx)>0
 \]
 which finishes the proof of \eqref{t:2} with this choice of $C$.

\item  
For the inequality \eqref{t:1}, we follow the same route as in the proof of \eqref{t:0}, the only change this time being that  $V(x)-\rho |x|^{p}$ is convex, and thus we obtain
 \begin{equation}\label{e:2}
 V(y)-V(x)-V'(x)(y-x)\ge\rho \big (|y|^{p}-|x|^{p}-p\sign(x)|x|^{p-1}(y-x) \big ).
 \end{equation}
Writing $\theta(x)=x+\psi(x)$,  and using \eqref{e:1} together with $a-1\ge \log (a)$ for $a\ge 0$,
 one arrives at
\[
E(\mu)-E(\mu_{V})\ge  \rho\int \left(|x+\psi(x)|^{p}-|x|^{p}-p\sign(x)|x|^{p-1}\psi(x) \right)\mu_{V}(dx).
\]
Now we use the fact that for all $a,b\in\R$,
\begin{equation}\label{e:30}
|a+b|^{p}-|b|^{p}-p \, \sign(b)|b|^{p-1}a\ge c_{p} |a|^{p},
\end{equation}
which applied to the above inequality in conjunction to \eqref{e:w}, yields  inequality \eqref{t:1}. \qedhere
\end{enumerate}
\end{proof}

\begin{remark} 
\begin{enumerate}
\item The $C^{2}$ regularity of $V$ for \eqref{t:0} can be dropped (see \cite{popescu}) but to simplify the presentation here we decided to consider only this case.  
\item  If $V(x)-\rho|x|^{p}$ is convex, then using inequalities \eqref{t:1}, \eqref{t:2} and Young's inequality we obtain that for any $2\le k \le p$, there exists a constant $c=c(k,p,\rho,\mu_{V},V)$ such that 
\[
c \, W_{k}^{k}(\mu,\mu_{V})\le E(\mu)-E(\mu_{V}). 
\]
\item We want to point out that the inequalities \eqref{t:1} and \eqref{t:2} are somehow complementary to each other.  For example, if we take $V(x)=\rho |x|^{p}$ with $p>1$ and  the measure $\mu=\theta_{\#}\mu_{V}$ for  $\theta(x)=x+m$, then equation \eqref{t:1} takes the form
\begin{equation}\label{e:tmp1}
c_{p}m^{p}\le \int \big (|x+m|^{p}-|x|^{p} \big )\mu_{V}(dx) 
\end{equation}
while equation \eqref{t:2} becomes
\[
Cm^{2}\le \int \big (|x+m|^{p}-|x|^{p} \big )\mu_{V}(dx), 
\]
which, because it is easy to check that $\mu_{V}$ is symmetric, is the same as
\begin{equation}\label{e:tmp2}
Cm^{2}\le \int \big (|x+m|^{p}-|x|^{p}-p \, \sign(x)|x|^{p-1}m \big )  \mu_{V}(dx).
\end{equation}
Notice here that \eqref{e:tmp1} is in the right scale for large $m$ as \eqref{e:tmp2} is in the right scale for $m$ close to $0$, because in this case the integrand is of the size $m^{2}$.   It seems that Talagrand's transportation inequality in this context has two aspects, one is the large $W_{p}(\mu,\mu_{V})$ which is dictated by the potential $V$ for large values and results with equation \eqref{t:1} and the small $W_{2}(\mu,\mu_{V})$ regime which is dictated by the repulsion effect of the logarithm and results with equation \eqref{t:2}. 
\item It is not clear whether inequality \eqref{t:2} still holds for the case of a potential $V$ which is not convex.   Of interest would be the particular case $V(x)=ax^{4}+bx^{2}$ for some $a>0$ and $b<0$. This example actually raises the question of the stability of transportation inequality under bounded perturbations.  
\item Very likely the constant $c_{p}$ in \eqref{t:1} is not sharp.  
\end{enumerate}

\end{remark}

\section{Potential Independent Transportation Inequalities}\label{potindep}

In this section, we investigate some potential independent transportation inequalities. A transportation inequality in the form of \eqref{t:2} can not possibly hold without a quadratic growth at infinity.  Also, the proof of \eqref{t:2} might lead to the conclusion that the logarithmic term plays a more important role.  Therefore the natural question one may ask is whether there is a manifestation of this fact in some sort of transportation type inequality which is independent of the potential involved. The main question reduces to hint some appropriate distance one needs to use to replace the Wasserstein distance in Theorem \ref{thm:1}. We investigate in this section several possibilities, starting with the free version of the classical Pinsker's inequality. 

The Pinsker's inequality classically states that (cf. \cite{Csz} and \cite{Kemp})
\[
2\| \mu-\nu \|_{v}^{2}\le E(\mu|\nu)\quad \text{for any}\: \: \mu,\nu \: \: \text{probability measures on}\: \R,
\]
where $\|\mu-\nu \|_{v}$ is the total variation distance between $\mu$ and $\nu$ and $E(\mu|\nu)$ is the relative entropy between $\mu$ and $\nu$.  This in particular shows that if $\mu_{n}$ convergence to $\mu$ in entropy, then $\mu_{n}$ converges to $\mu$ is a very strong sense.  

The same natural question can be posed in the logarithmic entropy context.  For a given potential $V$, is there an inequality of the form 
\[
C \, \| \mu -\mu_{V} \|_{v}^{2}\le E(\mu) -E(\mu_{V})
\]
for a given constant $C>0$ and any probability distribution $\mu$ on $\R$?  

It turns out that these inequalities do not hold for the logarithmic energy.  In fact, we will show that even a weaker inequality of the form
\begin{equation}\label{e:7'}
C \, |F_{\mu}-F_{\mu_{V}}|_{u}^{2}\le E(\mu)-E(\mu_{V})
\end{equation}
does not hold, where $F_{\mu}$ denotes the cumulative function of a  probability measure $\mu$ on the line.   Even though the uniform distance does not have the same widespread use in probability it appears for example in the Berry-Esseen type estimates for the convergence in the central limit theorem.  This is the reason why we consider this distance as the first next best candidate wherever the total variation fails.   Clearly this metric gives a stronger topology as the topology of weak convergence.  

Will construct a counterexample to \eqref{e:7'} in the case of $V(x)=2x^{2}$, for which the equilibrium measure is  
\[
\mu_{V}(dx)=\mathbbm{1}_{[-1,1]}(x) \, \frac{2\sqrt{1-x^{2}}}{\pi} \, dx,
\]
the semicircular law on $[-1,1]$.  Consider now the sequence 
\[
\mu_{n}(dx)=\mathbbm{1}_{[-1,1]}(x) \, \frac{2\sqrt{1-x^{2}}}{\pi} \, dx+\frac{\sum_{k=2}^{2n-1}(-1)^{k}T_{2k+1}(x)}{4(n^{2}-1)\pi\sqrt{1-x^{2}}} \, dx
\]
 where $T_{k}$ is the $k^{th}$ Chebyshev polynomial of the first kind. With these choices we have that
\begin{equation}\label{e:90}
 E(\mu_{n})-E(\mu_{V})\le \frac{\pi^{2}}{\log(n/3)} \,
 |F_{\mu_{n}}-F_{\mu_{V}}|_{u}^{2}\quad \text{for all}\: \:  n\ge 4.
\end{equation}

Let us point out that $\mu_{n}$ is indeed a probability measure.  This requires a little proof but it's entirely elementary and is left to the reader.  

To prove \eqref{e:7'},  notice that since the support of $\mu_{n}$ is the same as the support of $\mu_{V}$, we have from \eqref{VC1} that 
\begin{equation}\label{e:89}
E(\mu_{n})-E(\mu_{V}) = -\iint \log|x-y|(\mu_{n}-\mu_{V})(dx)(\mu_{n}-\mu_{V})(dy).
\end{equation}
Next remark that $\mu_{n}=\cos_{\#}(f_{n}\lambda)$ and $\mu_{V}=\cos_{\#}(g\lambda)$, where $\lambda$ is the Lebesgue measure on $[0,\pi]$ and
\[
f_{n}(t)=\frac{1-\cos(2t)}{\pi}+ \frac{1}{4\pi (n^{2}-1)}\sum_{k=2}^{2n-1} (-1)^{k}\cos((2k+1)t),
\qquad g(t)=\frac{1-\cos(2t)}{\pi} \, .
\] 
and further
\[
-\iint \log|x-y|(\mu_{n}-\mu_{V})(dx)(\mu_{n}-\mu_{V})(dy)=-\int_{0}^{\pi}\int_{0}^{\pi} \log|\cos t-\cos s|h_{n}(t)h_{n}(s)dtds\quad\text{where}\quad h_{n}=f_{n}-g.
\] 
Now we provide a formula for the logarithmic energy we learnt from \cite{H} and have not seen it elsewhere.  Here is a quick description.  Write first $\cos t=(e^{i t}+e^{-it})/2$ and $\cos s=(e^{i s}+e^{-is})/2$ so $|\cos t-\cos s|=|(e^{it}+e^{-it})/2-(e^{is}+e^{-is})|/2=|1-e^{i(t+s)}||1-e^{i(t-s)}|/2$ and so, for $t\ne s$, and $t$ or $s$ not equal to $\pi$, 
\begin{align*}
\log|\cos t -\cos s|& =-\log 2 + {\rm Re}\big (\log(1-e^{i(t+s)})+\log(1-e^{i(t-s)})\big)
=-\log 2 -\sum_{\ell=1}^{\infty} {\rm Re} \big (e^{i\ell(t+s)}/\ell+e^{i\ell(t-s)}/\ell \big ) \\ 
&=-\log 2-\sum_{\ell=1}^{\infty}\frac{2}{\ell}\cos(\ell t)\cos(\ell s) .
\end{align*}
From this, one gets to
\begin{equation}\label{e:140}
-\int_{0}^{\pi}\int_{0}^{\pi} \log|\cos t-\cos s|h_{n}(t)h_{n}(s)dtds=\sum_{\ell=1}^{\infty}\frac{2}{\ell} \left( \int_{0}^{\pi}\cos(\ell t)h_{n}(t)dt \right)^{2}.
\end{equation}
But now, 
\[
\int_{0}^{\pi}\cos(\ell t)h_{n}(t)dt =\frac{1}{4\pi(n^{2}-1) }\sum_{k=2}^{2n-1}(-1)^{k}\int_{0}^{\pi}\cos(\ell t)\cos((2k+1)t)dt=\begin{cases} \frac{(-1)^{(\ell-1)/2}}{8(n^{2}-1)} & 4\le \ell \le 4n \text{ and odd}\\ 0&\text{otherwise}\end{cases}
\]
and thus
\begin{equation}\label{e:201}
-\int_{0}^{\pi}\int_{0}^{\pi} \log|\cos t-\cos s|h_{n}(t)h_{n}(s)dtds=\sum_{\ell =1}^{\infty}\frac{2}{\ell} \left( \int_{0}^{\pi}\cos(\ell t)h_{n}(t)dt \right)^{2}=\frac{1}{32(n^{2}-1)^{2}}\sum_{\ell=2}^{2n-1}\frac{1}{2\ell+1} \, .
\end{equation}
On the other hand $|F_{\mu_{n}}-F_{\mu_{V}}|_{u}=|F_{f_{n}\lambda}-F_{g\lambda}|_{u}=\sup_{x\in[0,\pi]} \big |\int_{0}^{x}h_{n}(t)dt \big |$ and 
\[
\int_{0}^{x}h_{n}(t)dt=\frac{1}{4\pi (n^{2}-1)}\sum_{\ell=2}^{2n-1}\frac{(-1)^{\ell}\sin((2\ell+1) x)}{2\ell+1} \, ,
\]
from which for $x=\pi/4$, we obtain
\begin{equation}\label{e:202-1}
|F_{\mu_{n}}-F_{\mu_{V}}|_{u}=\sup_{x\in[0,\pi]}\left|\int_{0}^{x}h_{n}(t)dt\right|\ge\frac{1}{4\pi (n^{2}-1)} \sum_{\ell=2}^{2n-1}\frac{1}{2\ell+1} \, .
\end{equation}
Combining \eqref{e:201} and \eqref{e:202-1}  we get 
\begin{equation}\label{e:13}
\frac{\pi^{2}}{2\sum_{\ell=2}^{2n-1}\frac{1}{2\ell+1}} \, |F_{\mu_{n}}-F_{\mu_{V}}|_{u}^{2} \ge -\iint \log|x-y|(\mu_{n}-\mu_{V})(dx)(\mu_{n}-\mu_{V})(dy)
\end{equation}
which together with the fact that $\sum_{\ell=2}^{2n-1}\frac{1}{2\ell+1}\ge \frac{1}{2}\log(n/3)$ for $n\ge4$ and \eqref{e:89}, we finally arrive at \eqref{e:90}.

The example shown above has the property that $E(\mu_{n})-E(\mu_{V})$ converges to $0$ when $n$ goes to infinity, and also that $|F_{\mu_{n}}-F_{\mu_{V}}|_{u}$ converges to zero.  Despite the fact that \eqref{e:7'} does not hold, we will see below in Corollary~\ref{c:1} that if $E(\mu_{n})-E(\mu_{V})$ converges to $0$, then $|F_{\mu_{n}}-F_{\mu_{V}}|_{u}$ always converges to $0$.

We consider now a weak form of \eqref{e:7'}.  To do this we define the distance 
\begin{equation}\label{e:d}
d(\mu,\nu)=\sup_{a,b\in\R} \left| \int e^{-|ax+b|}\mu(dx)-\int e^{-|ax+b|}\nu(dx) \right|.
\end{equation}

With this definition we have the following result.
\begin{theorem}\label{p:1}
For any potential $V$ satisfying \eqref{c}, we have that  for any compactly supported measure $\mu$,
\begin{equation}\label{e:57}
4\pi^{3}d^{2}(\mu,\mu_{V})\le E(\mu)-E(\mu_{V}).
\end{equation}
\end{theorem}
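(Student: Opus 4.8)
\emph{Proof plan.} The idea is to discard the potential, reducing the claim to an estimate for the positive-definite logarithmic energy of the neutral signed measure $\nu=\mu-\mu_V$, and then to evaluate that energy by Fourier analysis. There is nothing to prove if $E(\mu)=+\infty$, so assume $E(\mu)<\infty$; then $\mu$ has no atoms and charges no polar set. Writing $U^{\mu_V}(x)=\int\log|x-y|\,\mu_V(dy)$, I would first record, using the Euler--Lagrange relations \eqref{VC1}, the identity
\[
E(\mu)-E(\mu_V)=\int\bigl(V(x)-2U^{\mu_V}(x)-C\bigr)\,\mu(dx)\;+\;\mathcal E(\nu),\qquad
\mathcal E(\nu):=-\iint\log|x-y|\,\nu(dx)\nu(dy).
\]
The first integral is $\ge 0$: the inequality in \eqref{VC1} holds $\mu$-a.e.\ because $\mu$ does not charge polar sets, while the equality in \eqref{VC1} makes the corresponding term against $\mu_V$ vanish. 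Hence it suffices to prove $4\pi^{3}d^{2}(\mu,\mu_V)\le\mathcal E(\nu)$.

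Next comes the classical spectral representation of the logarithmic energy of a compactly supported measure of total mass zero. From Frullani's identity $\log\frac1{|s|}=\int_0^{\infty}\frac{e^{-a|s|}-e^{-a}}{a}\,da$, the vanishing of the $e^{-a}$ contribution (since $\nu(\R)=0$), the elementary formula $e^{-a|u|}=\frac{a}{\pi}\int_{\R}\frac{e^{iut}}{a^{2}+t^{2}}\,dt$, an application of Tonelli (legitimate since all measures involved have bounded density and finite energy, and the integrand is nonnegative once one has passed to the Fourier side), and $\int_0^{\infty}\frac{da}{a^{2}+t^{2}}=\frac{\pi}{2|t|}$, I would arrive at
\[
\mathcal E(\nu)=\frac12\int_{\R}\frac{|\widehat\nu(t)|^{2}}{|t|}\,dt,\qquad \widehat\nu(t):=\int e^{ixt}\,\nu(dx),
\]
so that $\mathcal E(\nu)\ge 0$ and $\mathcal E(\nu)$ is (the square of) a scale-invariant $\dot{H}^{-1/2}$-type norm of $\nu$.

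Finally, fix $a\ne 0$ and $b\in\R$ (the degenerate case $a=0$ gives $\int e^{-|b|}\,\nu(dx)=0$). A change of variables shows that the Fourier transform of $x\mapsto e^{-|ax+b|}$ is $\frac{2|a|\,e^{-ibt/a}}{a^{2}+t^{2}}$, so Fourier inversion, applied to $e^{-|ax+b|}$ and integrated against $\nu$, gives
\[
\int e^{-|ax+b|}\,\nu(dx)=\frac{|a|}{\pi}\int_{\R}\frac{e^{-ibt/a}\,\overline{\widehat\nu(t)}}{a^{2}+t^{2}}\,dt.
\]
Taking moduli and then Cauchy--Schwarz with the factorization $\frac1{a^{2}+t^{2}}=|t|^{-1/2}\cdot\frac{|t|^{1/2}}{a^{2}+t^{2}}$, together with the elementary evaluation $\int_{\R}\frac{|t|}{(a^{2}+t^{2})^{2}}\,dt=\frac1{a^{2}}$, the powers of $a$ cancel and one obtains
\[
\Bigl|\int e^{-|ax+b|}\,\nu(dx)\Bigr|^{2}\;\le\;c_{0}\int_{\R}\frac{|\widehat\nu(t)|^{2}}{|t|}\,dt\;=\;2c_{0}\,\mathcal E(\nu)
\]
with $c_{0}$ a \emph{universal} constant, independent of $(a,b)$. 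Taking the supremum over $(a,b)$ in the definition \eqref{e:d} of $d$ and combining with the first step yields $\mathcal E(\nu)\ge \mathrm{const}\cdot d^{2}(\mu,\mu_V)$, i.e.\ \eqref{e:57} once the numerical constant is tracked.

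The genuinely substantive points are two. First, the potential-theoretic fact that $\mu$ charges no polar set when $E(\mu)<\infty$, which is what makes the first term above legitimately nonnegative; this is standard (see \cite{ST}). Second --- and this is the real heart of the matter --- the exponential test functions $e^{-|ax+b|}$ are precisely matched to the $\dot{H}^{-1/2}$ norm appearing in $\mathcal E(\nu)$: their Fourier transforms $\frac{2|a|e^{-ibt/a}}{a^{2}+t^{2}}$ lie in $L^{2}(|t|\,dt)$ with a norm that does not depend on the scale $a$, so the Cauchy--Schwarz bound is uniform over the whole two-parameter family and survives the supremum defining $d$. This is exactly what fails for indicator-type test functions, and accounts for why \eqref{e:7'} (total variation, uniform distance) cannot hold while \eqref{e:57} does.
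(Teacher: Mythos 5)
Your proposal is, in structure, exactly the paper's own proof: the reduction to $\mathcal E(\nu)$ for $\nu=\mu-\mu_V$ via the variational conditions \eqref{VC1}, the spectral representation $\mathcal E(\nu)=\tfrac12\int_{\R}|\hat\nu(t)|^2|t|^{-1}\,dt$ (which the paper quotes from \cite[eq.~6.45]{Deift1} instead of rederiving via Frullani), the computation of the Fourier transform of $e^{-|ax+b|}$, and a Cauchy--Schwarz bound that is uniform in $(a,b)$ because the scale factors in $a$ cancel. The one substantive issue is the constant, which is the only nontrivial content of the statement once one knows $d\le 1$. Tracking your estimate: with $\int_{\R}|t|(a^{2}+t^{2})^{-2}dt=a^{-2}$ your Cauchy--Schwarz gives
\[
\Bigl|\int e^{-|ax+b|}\,\nu(dx)\Bigr|^{2}\le\frac{a^{2}}{\pi^{2}}\cdot\frac{1}{a^{2}}\int_{\R}\frac{|\hat\nu(t)|^{2}}{|t|}\,dt=\frac{2}{\pi^{2}}\,\mathcal E(\nu),
\]
that is, the inequality $\tfrac{\pi^{2}}{2}\,d^{2}(\mu,\mu_V)\le\mathcal E(\nu)$, whose constant $\pi^{2}/2$ falls well short of the stated $4\pi^{3}$. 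The paper arrives at $4\pi^{3}$ by a slightly different bookkeeping: first $\tfrac1{2|t|}\ge\tfrac{|a|}{a^{2}+t^{2}}$, then Cauchy--Schwarz against $\int(a^{2}+t^{2})^{-1}dt=\pi/|a|$, and finally the inversion identity \eqref{e:56}, which carries an extra factor $2\pi$. You should check that factor against the normalization $\hat\nu(t)=\int e^{ixt}\nu(dx)$ implicit in the energy formula \eqref{e:700}: with that normalization the pairing identity is $\int\hat\nu(t)\psi(t)\,dt=\int\hat\psi(x)\,\nu(dx)$ with no $2\pi$, and the resulting constant is of order $\pi$, comparable to yours. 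So either reconcile the Fourier normalizations so as to actually reach $4\pi^{3}$, or state the result with the explicit (smaller) constant your argument delivers; as written, ``once the numerical constant is tracked'' does not produce \eqref{e:57}.
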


\begin{proof} 
Using equations \eqref{c} and \eqref{VC1}, we get  for any compactly supported measure $\mu$ with $E(\mu)$ finite, 
\[
E(\mu)-E(\mu_{V})\ge -\iint \log|x-y|(\mu-\mu_{V})(dx)(\mu-\mu_{V})(dy).
\]
We will prove that for any measures $\mu$ and $\nu$ with compact support such that $-\iint \log|x-y|\mu(dx)\mu(dy)<\infty$ and $-\iint \log|x-y|\nu(dx)\nu(dy)<\infty$, we have that
\begin{equation}\label{e:6}
4\pi^{3}d^{2}(\mu,\nu)\le -\iint \log|x-y|(\mu-\nu)(dx)(\mu-\nu)(dy),
\end{equation}
which shows that \eqref{e:6} implies \eqref{e:57}. 

Now we use  \cite[equation 6.45]{Deift1} to write
\begin{equation}\label{e:700}
-\iint \log|x-y|(\mu-\mu_{V})(dx)(\mu-\mu_{V})(dy)=\int_{0}^{\infty} \frac{|\hat{\mu}(t)-\hat{\mu}_{V}(t)|^{2}}{t} \, dt
\end{equation}
where the hat stands for the Fourier transform, and  continue with 
\[
\int_{0}^{\infty} \frac{|\hat{\mu}(t)-\hat{\nu}(t)|^{2}}{t} \, dt=\frac{1}{2}\int_{-\infty}^{\infty} \frac{|\hat{\mu}(t)-\hat{\nu}(t)|^{2}}{|t|}dt\ge |a|\int_{-\infty}^{\infty}\frac{|\hat{\mu}(t)-\hat{\nu}(t)|^{2}}{a^{2}+t^{2}} \, dt \ge \frac{a^{2}}{\pi}\left|\int_{-\infty}^{\infty} \frac{\left(\hat{\mu}(t)-\hat{\nu}(t)\right)e^{-ict}}{a^{2}+t^{2}} \, dt\right|^{2}
\]
for any $a,c\in\R$ with $a\ne0$.   Further, using the inversion formula for the Fourier transform, one has
\begin{equation}\label{e:56}
\int_{-\infty}^{\infty} \frac{\left(\hat{\mu}(t)-\hat{\nu}(t)\right)e^{-ict}}{a^{2}+t^{2}} \, dt=2\pi\int \hat{\phi} \, (x)(\mu-\nu)(dx)=\frac{2\pi^{2}}{|a|}\int e^{-|a(x+c)|} (\mu-\nu)(dx)
\end{equation}
because for $\displaystyle\phi(t)=\frac{e^{ict}}{a^{2}+t^{2}}$,
\[
\hat{\phi}(x)=\int\frac{e^{i(x+c)t}}{t^{2}+a^{2}} \, dt=\frac{\pi e^{-|a(x+c)|}}{|a|} \, .
\]
From here, \eqref{e:6} follows immediately.  \qedhere

\end{proof}
\begin{remark}
From equation \eqref{e:700} it seems that the distance one should consider should be the Sobolev norm with exponent $-1/2$.   This is another possible candidate to the role of $d$ played here, however not always finite.   We chose the metric $d$ as it's definition is somehow close to uniform norm of the difference of the Laplace transforms of the measures.  It is also always defined and bounded by $1$, thus resembling the total variation distance.    
\end{remark}

The next result is collecting facts about how strong the topology induced by $d$ is.  

\begin{proposition}\label{p:2}  \begin{enumerate} 
\item $d$ is a distance on $\mathcal{P}(\R)$ and if $d(\mu_{n},\mu)\xrightarrow[n\to\infty]{}0$, then $\mu_{n}\xrightarrow[n\to\infty]{}\mu$ in the  weak topology.   In addition $d(\delta_{a},\delta_{b})=1$ for $a\ne b$, thus the topology induced by $d$ is strictly stronger than the weak convergence topology.   
\item For any two probability measures $\mu$ and $\nu$, 
\begin{equation}\label{e:70b}
d(\mu,\nu)\le 2 \, |F_{\mu}-F_{\nu}|_{u}.
\end{equation}
\item If $V$ satisfies condition \eqref{c}, then 
 $E_{V}(\mu_{n})\xrightarrow[n\to\infty]{}E_{V}(\mu_{V})$ implies $|F_{\mu_{n}}-F_{\mu_{V}}|_{u}\xrightarrow[n\to\infty]{}0$.  
\end{enumerate}
\end{proposition}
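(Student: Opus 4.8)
I would prove the three claims in order; only the third is substantial.

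\emph{Claim (1): $d$ is a distance whose topology is strictly finer than weak convergence.} Finiteness ($d\le 1$), symmetry and the triangle inequality are immediate, $d$ being a supremum of absolute differences of integrals of functions valued in $[0,1]$. For the separation property, assume $d(\mu,\nu)=0$; specializing to $a=1$ and letting $b$ vary gives $\mu*\kappa=\nu*\kappa$ as functions in $L^1(\R)$, where $\kappa(x)=e^{-|x|}$ (the convolutions are in $L^1$ since $\int\kappa=2$). Taking Fourier transforms and using $\hat\kappa(\xi)=2/(1+\xi^2)\neq 0$, we obtain $\hat\mu=\hat\nu$, hence $\mu=\nu$. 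For the topological statement, if $d(\mu_n,\mu)\to0$ then $\int e^{-|ax+b|}d\mu_n\to\int e^{-|ax+b|}d\mu$ for every $a,b$; choosing $b=0$ and $a$ small (so $\int e^{-a|x|}d\mu$ is close to $1$) gives a uniform tail bound, hence tightness of $(\mu_n)$, so by Prokhorov every subsequence has a weakly convergent sub-subsequence; its limit $\mu'$ satisfies $\int e^{-|ax+b|}d\mu'=\int e^{-|ax+b|}d\mu$ for all $a,b$ (testing against the bounded continuous functions $e^{-|ax+b|}$) and so $\mu'=\mu$ by the injectivity just shown, whence $\mu_n\to\mu$ weakly. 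Finally, for $a\neq b$, taking the translation parameter in the definition to cancel $sa$ and letting the scale $s\to\infty$ gives $d(\delta_a,\delta_b)\ge\sup_s\bigl(1-e^{-|s||a-b|}\bigr)=1$, while $\delta_{1/n}\to\delta_0$ weakly; thus the $d$-topology is strictly finer than the weak one.

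\emph{Claim (2): $d(\mu,\nu)\le 2\,|F_\mu-F_\nu|_u$.} Fix $a\neq 0$ and $b$, and set $\phi(x)=e^{-|ax+b|}$. Then $\phi$ is continuous, Lipschitz, vanishes at $\pm\infty$, and has total variation $\int_\R|\phi'|\,dx=2$ (it rises from $0$ to $1$ on one half-line and falls back to $0$ on the other). Since $F_\mu-F_\nu$ is bounded and vanishes at $\pm\infty$, integration by parts gives $\int\phi\,d\mu-\int\phi\,d\nu=-\int_\R\bigl(F_\mu(x)-F_\nu(x)\bigr)\phi'(x)\,dx$, so $\bigl|\int\phi\,d\mu-\int\phi\,d\nu\bigr|\le|F_\mu-F_\nu|_u\int_\R|\phi'|\,dx=2\,|F_\mu-F_\nu|_u$, and taking the supremum over $a,b$ yields \eqref{e:70b}.

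\emph{Claim (3): $E_V(\mu_n)\to E_V(\mu_V)$ implies $|F_{\mu_n}-F_{\mu_V}|_u\to0$.} The plan has two stages. Stage one: upgrade the energy convergence to weak convergence $\mu_n\to\mu_V$. Since $\sup_n E_V(\mu_n)<\infty$ and, under the growth hypothesis \eqref{c}, the functional $E_V$ is lower semicontinuous for the weak topology with weakly compact sublevel sets (classical facts of logarithmic potential theory; cf. \cite{ST,Deift1}, and the good-rate-function property in \cite{BAG1}), the sequence $(\mu_n)$ is relatively compact, and any weak limit point $\mu'$ satisfies $E_V(\mu')\le\liminf_n E_V(\mu_n)=E_V(\mu_V)$; by uniqueness of the minimizer, $\mu'=\mu_V$, so $\mu_n\to\mu_V$ weakly. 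Stage two: upgrade to uniform convergence of distribution functions. As recalled in the excerpt, a measure with an atom has infinite logarithmic energy, so $\mu_V$ (with $E_V(\mu_V)<\infty$) is atomless and $F_{\mu_V}$ is continuous; then $F_{\mu_n}\to F_{\mu_V}$ pointwise, and Pólya's theorem — pointwise convergence of monotone functions to a continuous limit is automatically uniform, by a Dini-type subdivision of $[-\infty,\infty]$ into finitely many pieces on which $F_{\mu_V}$ oscillates little — gives $|F_{\mu_n}-F_{\mu_V}|_u\to0$.

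The only non-elementary input is the lower semicontinuity together with the weak compactness of the sublevel sets of $E_V$ used in Stage one, which I would quote rather than reprove; this is the main obstacle. An alternative self-contained route to $\mu_n\to\mu_V$ weakly, available whenever the Fourier representation \eqref{e:700} applies to the $\mu_n$, is to observe from \eqref{VC1} that $E_V(\mu_n)-E_V(\mu_V)\ge-\iint\log|x-y|(\mu_n-\mu_V)(dx)(\mu_n-\mu_V)(dy)=\int_0^\infty t^{-1}|\hat\mu_n(t)-\hat\mu_V(t)|^2\,dt\ge 0$, so the right side tends to $0$, forcing $\hat\mu_n\to\hat\mu_V$ a.e.\ along a subsequence and thus $\mu_n\to\mu_V$ weakly by Lévy's continuity theorem, hence fully.
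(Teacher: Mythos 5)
Your proof is correct; parts (1) and (2) follow essentially the paper's own arguments, and only part (3) takes a genuinely different route. For (1), your convolution formulation ($\mu*\kappa=\nu*\kappa$ with $\kappa(x)=e^{-|x|}$, then $\hat\kappa\neq0$) is the same Fourier-injectivity argument as the paper's, and your tightness-plus-Prokhorov step is an equivalent variant of the paper's vague-compactness argument (the paper extracts a vague limit of possibly smaller mass and recovers mass $1$ by letting $a\to0$, whereas you establish tightness first from the same family of test functions). For (2), your integration by parts with $\int|\phi'|=2$ is a cleaner repackaging of the paper's layer-cake identity and yields the same constant. For (3), the paper deliberately avoids importing lower semicontinuity and compactness of the sublevel sets of $E_V$: it deduces weak convergence from the quantitative bound of Theorem~\ref{p:1}, namely $4\pi^3 d^2(\mu_n,\mu_V)\le E_V(\mu_n)-E_V(\mu_V)$ (via \eqref{VC1} and \eqref{e:6}), and then invokes part (1); your ``alternative self-contained route'' through \eqref{e:700} is in substance exactly this. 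Your primary route is a legitimate and more classical argument, and it has the small advantage of not requiring the $\mu_n$ to be compactly supported (which the paper's proof, and your alternative, implicitly do in order to apply \eqref{e:700}); what it costs is self-containedness, since lower semicontinuity and tightness of the sublevel sets under the bare growth condition \eqref{c} must be quoted or verified separately. The final step (Pólya's theorem, using only continuity of $F_{\mu_V}$ from the atomlessness of a finite-energy measure) coincides with the paper's.
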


\begin{proof} \begin{enumerate} 
\item To prove that $d$ is a distance the only non trivial fact is that for two probability measures $\mu$ and $\nu$,  $d(\mu,\nu)=0$ implies $\mu=\nu$.    Thus from equation \eqref{e:56}, we obtain for $a=1$ that for all $c \in \R$,
\[
\int_{-\infty}^{\infty} \frac{\left(\hat{\mu}(t)-\hat{\nu}(t)\right)e^{-ict}}{1+t^{2}} \, dt=0 .
\]
 Since this holds true for any $c\in\R$, it implies that the Fourier transform of the function $t\to \frac{\hat{\mu}(t)-\hat{\nu}(t)}{1+t^{2}}$ is $0$, which means that the function in discussion must be $0$.  This means that $\hat{\mu}=\hat{\nu}$, or equivalently that $\mu=\nu$.  
 
 Let $\mathcal{L}(\mu,\nu)$ stand for the Levy distance which induces the weak topology on $\mathcal{P}(\R)$.   Let $d(\mu_{n},\mu)\xrightarrow[n\to\infty]{}0$.   Assume now that there exists $\epsilon>0$ and a subsequence such that $\mathcal{L}(\mu_{n_{k}},\mu)\ge \epsilon$.  Otherwise said, the sequence $\mu_{n}$ has a subsequence which is not convergent to $\mu$.  Since, we are dealing with probability measures, there is a subsequence $\mu_{n_{k_{l}}}$ which is vaguely convergent to a measure $\nu$ with total mass less than $1$.  This means that for any continuous function $\phi$ which is vanishing at infinity, we have that  
 \[
 \int \phi d\mu_{n_{k_{l}}}\xrightarrow[l\to\infty]{}\int \phi d\nu.
 \]
We can apply this for functions $\phi(x)=e^{-|ax+b|}$ where $a\ne0$ and infer that 
\[
\int e^{-|ax+b|} \mu_{n_{k_{l}}}(dx)\xrightarrow[l\to\infty]{}\int e^{-|ax+b|} \nu(dx)\quad \text{for all } \: \: a\ne 0,b\in\R.
\]
On the other hand, because $d(\mu_{n_{k_{l}}},\mu)\xrightarrow[l\to\infty]{}0$, these considerations result with 
\[
\int e^{-|ax+b|}\mu(dx)=\int e^{-|ax+b|}\nu(dx)\quad\text{for all}\: \: a\ne0,b\in\R.
\]
Further, using the dominated convergence for $b=0$ and $a\to 0$, we obtain that $\nu$ is a probability measure.  From the discussion at the beginning of this proof, it also follows that $\nu=\mu$ and this in turn results with $\mu_{n_{k_{l}}}$ being weakly convergent to $\mu$, a contradiction.  This proves that the convergence in the metric $d$ implies weak convergence.  

It is obvious that $d(\mu,\nu)\le 1$ for any measures $\mu$ and $\nu$.  For the case of discrete measures, we also have that $1\ge d(\delta_{a},\delta_{b})\ge \int e^{-\alpha|x-a|}\delta_{a}(dx)-\int e^{-\alpha|x-a|}\delta_{b}(dx)$ for any $\alpha>0$, which yields that $1\ge d(\delta_{a},\delta_{b})\ge 1-e^{-\alpha|b-a|}$ for all $\alpha>0$.  Letting $\alpha\to \infty$, we get that $d(\delta_{a},\delta_{b})=1$ for $a\ne b$ which shows that convergence in $d$ is strictly stronger than convergence in the weak topology. 
\item From the fact that for any finite positive measure $\mu$, 
\[
\int_{(0,\infty)} (1-e^{-\alpha y})\mu(dx)=\int_{(0,\infty)} \alpha e^{-\alpha y} \mu((y,\infty))dy,
\] 
we deduce that 
\[
\int e^{-\alpha|x-a|}(\mu-\nu)(dx)=\int_{(0,\infty)}\alpha e^{-\alpha y} \big [F_{\mu}(a-y)-F_{\mu}(a+y)-F_{\nu}(a-y)+F_{\nu}(a+y)\big ] dy
\]
which easily yields \eqref{e:70b}.

\item We actually show that if $\mu_{n}$ and $\mu$ are compactly supported probability measures such that 
\[ 
-\iint \log |x-y| \, \mu(dx)\mu(dy)<\infty,\quad-\iint \log |x-y| \, \mu_{n}(dx)\mu_{n}(dy)<\infty
\]
and
\[
\lim_{n\to \infty}\iint \log |x-y|(\mu_{n}-\mu)(dx)(\mu_{n}-\mu)(dy)=0,
\]
then $|F_{\mu_{n}}-F_{\mu}|_{u}\xrightarrow[n\to\infty]{}0$.  
From \eqref{e:6} and the first part, we obtain that $\mu_{n}$ converges weakly to $\mu$.  In addition, none of the measures $\mu_{n}$ or $\mu$ have atoms.   Thus $F_{\mu_{n}}$ and $F_{\mu}$ are continuous functions which combined with the weak convergence implies that $F_{\mu_{n}}$ converges pointwise to $F_{\mu}$.  Since the functions $F_{\mu_{n}}$ and $F_{\mu}$ are distributions of probability measures, it is an easy matter to check that the convergence is actually uniform. \qedhere
\end{enumerate}
\end{proof}

\begin{remark}  We do not know if the topology of convergence in $d$ is the same as the one defined by the metric $|F_{\mu}-F_{\nu}|_{u}$. 
\end{remark}

This result might leave one wondering if a stronger convergence takes place.  In other words, is it true that $E_{V}(\mu_{n})\xrightarrow[n\to\infty]{}E_{V}(\mu_{V})$ implies $\| \mu_{n}-\mu_{V} \|_{v}\xrightarrow[n\to\infty]{}0$?  
To this end, we can consider $V(x)=\log|\frac{|x|+\sqrt{x^{2}-1}}{2}|$ and notice (see \cite[page 46]{ST}) that $\mu_{V}$ is the arcsine law of $[-1,1]$.  Thus if we consider 
\[
\mu_{V}(dx)=\mathbbm{1}_{[-1,1]}(x)\frac{dx}{\pi \sqrt{1-x^{2}}} \, , \qquad \mu_{n}(dx)=\mathbbm{1}_{[-1,1]}(x)\frac{(1-T_{n}(x))dx}{\pi \sqrt{1-x^{2}}} \, ,
\]
then, using the same argument which led us to \eqref{e:140}, with $h_{n}$ there replaced by $h_{n}(x)=\cos(nx)$ here, one arrives at
$ E(\mu_{n})-E(\mu_{V})=\frac{1}{n}$
while the total variation distance is $\|\mu_{n}-\mu_{V}\|_{v}\ge 1/4$.

\section{Log-Sobolev Inequality}

In this section, we develop similarly the mass transportation method to prove the Log-Sobolev inequality in the free context. Note again that, as discussed in the introduction, the first assertion for strictly convex potentials was initially proved by large matrix approximation in \cite{Biane2}.

Before we state the main result, we define inspired by Voiculescu \cite{Voiculescu2}, the relative free Fisher information as
\begin{equation}\label{e:27}
I(\mu)=\int \big (H\mu(x)-V'(x) \big )^{2}\mu(dx)\quad\text{with}\quad H\mu(x)=p.v. \int\frac{2}{x-y} \, \mu(dy). 
\end{equation}
for measures $\mu$ on $\R$ which have density $p=d\mu/dx$ in  $L^{3}(\R)$.   In this case the principal value integral is a function in $L^{3}$.   Otherwise we let $I(\mu)$ be equal to $+\infty$.

\begin{theorem}[Log-Sobolev]\label{t:3}
\begin{enumerate}
\item If $V$ is $C^{2}$ and $V(x)-\rho x^{2}$ is convex for some $\rho>0$, then for any probability measure $\mu$ on $\R$,
\begin{equation}\label{e:26}
E(\mu)-E(\mu_{V})\le \frac{1}{4\rho} \, I(\mu).
\end{equation}
 
Equality is attained for the case $V(x)=\rho x^{2}$ and $\mu=\theta_{\#}\mu_{V}$, where $\theta(x)=x+m$.  Thus the inequality \eqref{e:26} is sharp for translations of $\mu_{V}$.  
\item If $V$ is $C^{2}$ and $V(x)-\rho |x|^{p}$ is convex for some $\rho>0$ and $p>1$, then
for any probability measure $\mu$ on $\R$,
\begin{equation}\label{e:102}
E(\mu)-E(\mu_{V})\le \frac{k_{p}}{\rho^{q/p}} \, I_{q}(\mu)\quad \text{where}\quad I_{q}(\mu)
=\int \big |H\mu(x)-V'(x) \big |^{q}\mu(dx)
\end{equation}
where here $q$ is the conjugate of $p$ i.e. $1/q+1/p=1$ and the constant $k_{p}=(p c_{p})^{q/p}/q$, with $c_{p}$ from \eqref{t:1}.  
\end{enumerate}
\end{theorem}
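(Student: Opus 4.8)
The plan is to run the mass transportation argument of Theorem~\ref{thm:1}, but with the transport map oriented the other way, from $\mu$ to $\mu_V$, so that the Hilbert transform $H\mu$ of $\mu$ (rather than $H\mu_V = V'$) enters and automatically pairs with $V'$ into the Fisher information integrand $H\mu - V'$. There is nothing to prove unless the relevant Fisher information ($I(\mu)$ in case~(1), $I_q(\mu)$ in case~(2)) is finite, so assume this; then $\mu$ has a density in $L^3$, hence is atomless, the non-decreasing map $\theta$ with $\theta_{\#}\mu = \mu_V$ is well defined and realizes the quadratic (resp. $p$-th power) cost, and, since $V'$ grows at infinity (linearly in case~(1), like $|x|^{p-1}$ in case~(2)) while $H\mu$ decays there, finiteness of the Fisher information already forces the relevant moment of $\mu$, and with it $E(\mu)$, to be finite. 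Pushing $\mu$ forward by $\theta$ inside the definition of $E(\mu_V)$ (every resulting integral being finite) and writing $\eta(x) = x - \theta(x)$, I would first record the analogue of \eqref{e:1},
\[
E(\mu) - E(\mu_V) = \int \big( V(x) - V(\theta(x)) \big)\,\mu(dx) + \iint \log\frac{|\theta(x)-\theta(y)|}{|x-y|}\,\mu(dx)\,\mu(dy),
\]
which here requires no use of the variational equation \eqref{vc}, only the definition of $E$ and a change of variables.

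Next I would bound the two terms separately. For the first, convexity of $V(x) - \rho x^2$ gives in case~(1) the pointwise inequality $V(x) - V(\theta(x)) \le V'(x)\,\eta(x) - \rho\,\eta(x)^2$, while in case~(2) convexity of $V(x) - \rho|x|^p$ combined with \eqref{e:30} gives $V(x) - V(\theta(x)) \le V'(x)\,\eta(x) - \rho\,c_p\,|\eta(x)|^p$. For the logarithmic term, since $\theta$ is non-decreasing one has $\frac{\theta(x)-\theta(y)}{x-y} = 1 - \frac{\eta(x)-\eta(y)}{x-y} > 0$, so $\log s \le s - 1$ yields
\[
\iint \log\frac{|\theta(x)-\theta(y)|}{|x-y|}\,\mu(dx)\,\mu(dy) \le -\iint \frac{\eta(x)-\eta(y)}{x-y}\,\mu(dx)\,\mu(dy) = -\int \eta(x)\,H\mu(x)\,\mu(dx),
\]
the last identity being the symmetrization that collapses the singular double integral onto the principal value defining $H\mu$.

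Combining the two estimates gives $E(\mu) - E(\mu_V) \le -\int \eta\,(H\mu - V')\,d\mu - \rho\int \eta^2\,d\mu$ in case~(1), and the same with $\rho\,c_p\int|\eta|^p\,d\mu$ in case~(2). At this point I would optimize pointwise in the value of $\eta(x)$: in case~(1), $-st - \rho t^2 \le s^2/(4\rho)$ gives $E(\mu) - E(\mu_V) \le \frac{1}{4\rho}\int (H\mu - V')^2\,d\mu = \frac{1}{4\rho}\,I(\mu)$; in case~(2), Young's inequality $-st - \rho c_p|t|^p \le \frac{k_p}{\rho^{q/p}}|s|^q$ with $k_p$ as in the statement gives $E(\mu) - E(\mu_V) \le \frac{k_p}{\rho^{q/p}}\,I_q(\mu)$. (Using Cauchy--Schwarz in place of the pointwise optimization in case~(1) produces, as a byproduct, the HWI inequality $E(\mu) - E(\mu_V) \le \sqrt{I(\mu)}\,W_2(\mu,\mu_V) - \rho\,W_2^2(\mu,\mu_V)$, which anticipates Section~5.) For the sharpness claim, take $V(x) = \rho x^2$ and $\mu$ the image of $\mu_V$ under $x \mapsto x + m$; then the transport map $\theta$ carrying $\mu$ onto $\mu_V$ is $x \mapsto x - m$, so $\eta \equiv m$, the convexity inequality is an equality, the logarithmic term vanishes, and $H\mu(x) - V'(x) = -2\rho m$ is constant on $\mathrm{supp}(\mu)$, so the pointwise optimization is also saturated and \eqref{e:26} holds with equality.

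The step I expect to require the most care is the identity $\iint \frac{\eta(x)-\eta(y)}{x-y}\,\mu(dx)\,\mu(dy) = \int \eta\,H\mu\,d\mu$: one must verify that the symmetric double integral is absolutely convergent --- using $\eta \in L^2(\mu)$ (finiteness of $W_2$), $\theta \in L^2(\mu)$ (as $\mu_V$ is compactly supported) and $H\mu \in L^2(\mu)$ (from finiteness of the Fisher information, of $\int (V')^2\,d\mu$, and the $L^3$ bound on the density via H\"older) --- and that splitting and relabeling it legitimately yields the principal value defining $H\mu$. The same circle of estimates also underlies the a priori reductions, namely that finiteness of the Fisher information forces the relevant moment, $E(\mu)$, and hence $W_p(\mu,\mu_V)$ to be finite, which is what makes the transport map and the decomposition above meaningful; everything else reduces to the chain of elementary pointwise inequalities described above.
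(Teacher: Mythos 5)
Your argument is correct and is essentially the paper's proof: the paper transports from $\mu_V$ to $\mu$ and integrates against $\mu_V$, then completes the square $\frac{1}{4\rho}\bigl[H\mu(\theta)-V'(\theta)-2\rho(x-\theta)\bigr]^2$ plus the nonnegative remainder $u-1-\log u$, which is exactly your pointwise optimization $-st-\rho t^2\le s^2/(4\rho)$ after the change of variables that inverts the transport map; the convexity bound, the $\log u\le u-1$ step, and the symmetrization producing $H\mu$ are identical. The only (immaterial) differences are the orientation of $\theta$ and that the paper sidesteps your a priori integrability discussion by assuming a smooth compactly supported density and invoking the approximation arguments of Hiai--Petz--Ueda.
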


\begin{proof}
\begin{enumerate}
\item We will assume that the measure $\mu$ has a smooth compactly supported density as the general case follows via approximation arguments discussed in details in \cite{HPU1}.  Take the (increasing) transport map $\theta$ from $\mu_{V}$ into $\mu$.  We write the inequality \eqref{e:26} in the following equivalent way
\begin{align}
\notag\frac{1}{4\rho}&\int \big (H\mu(\theta(x))-V'(\theta(x)) \big )^{2}\mu_{V}(dx)
+\int \big (V(x)-V(\theta(x))-V'(\theta(x)) \big (x-\theta(x)\big )\big)\mu_{V}(dx) \\ 
& \notag -\int \big(H\mu(\theta(x))-V'(\theta(x))\big)(x-\theta(x))\mu_{V}(dx)  \\ 
& +\int H\mu(\theta(x)) \big (x-\theta(x)\big )\mu_{V}(dx) -\iint \log\frac{x-y}{\theta(x)-\theta(y)} \, \mu_{V}(dx)\mu_{V}(dy)\ge0. \label{e:28}
\end{align}
Notice now that from the convexity of $V(x)-\rho x^{2}$, one obtains that 
\begin{equation}\label{e:63}
V(x)-V(\theta(x))-V'(\theta(x)) \big (x-\theta(x)\big )
\ge\rho \big (x^{2}-\theta(x)^{2}-2\theta(x) (x-\theta(x)) \big )
=\rho \big (x-\theta(x) \big )^{2}.
\end{equation}
 
Now,
\begin{equation}\label{e:64}
\int H\mu(\theta(x)) \big (x-\theta(x) \big )\mu_{V}(dx)
=\int \big (x-\theta(x) \big )\int \frac{2}{\theta(x)-\theta(y)} \, \mu_{V}(dy)\mu_{V}(dx)
=\iint \left(\frac{x-y}{\theta(x)-\theta(y)}-1\right)\mu_{V}(dx)\mu_{V}(dy)
\end{equation}
where one has to interpret the second integral here in the principal value sense, however since $\theta$ is increasing, the last integral is actually taken in the Lebesgue sense.   

Using these, equation \eqref{e:28} may be rewritten as
\begin{align*}
\notag\frac{1}{4\rho}\int  \big[ & (H\mu(\theta(x))  -V'(\theta(x))-2\rho(x-\theta(x))\big]^{2}\mu_{V}(dx) \\ 
&+\iint\left(\frac{x-y}{\theta(x)-\theta(y)}-1- \log\frac{x-y}{\theta(x)-\theta(y)}\right)\mu_{V}(dx)\mu_{V}(dy)\ge0
\end{align*}
which is seen to hold since $u-1-\log (u)\ge0$ for $u\ge0$.

Equality is attained for the case $V(x)=\rho x^{2}$ and  $\theta(x)=x+c$, which corresponds to the translations of the measure $\mu_{V}$.  

\item With the same arguments used in the above proof and the proof of Theorem \ref{thm:1}, we use equations \eqref{e:2} and \eqref{e:30} to argue that 
\begin{align*}
\frac{k_{p}}{\rho^{q/p}}&\int \big | H\mu(x)-V'(x) \big |^{q}\mu(dx)-E(\mu)+E(\mu_{V})\notag \\ 
&\ge\notag \int \Big [ \frac{k_{p}}{\rho^{q/p}} \big | H\mu(\theta(x))-V'(\theta(x)\big |^{q}+ 
 \big (V'(\theta(x))-H\mu(\theta(x))\big ) \big (x-\theta(x) \big )+ c_{p}\rho|x-\theta(x)|^{p}\Big ]\mu_{V}(dx) \\ 
& \quad +\iint\left(\frac{x-y}{\theta(x)-\theta(y)}-1- \log\frac{x-y}{\theta(x)-\theta(y)}\right)\mu_{V}(dx)\mu_{V}(dy) \\
& \ge0
\end{align*}
where we used Young's inequality $a^{q}/q+b^{p}/p\ge ab$ for $a,b\ge0$ and the constant $k_{p}=(p c_{p})^{q/p}/q$.\qedhere
\end{enumerate}

\end{proof}

\begin{remark}
It was proved in \cite{L} that a Log-Sobolev inequality always implies a transportation inequality.
\end{remark}

\section{HWI Inequality}

This section is devoted to the free analog of the HWI inequality of Otto and Villani \cite{OV} in the classical context, connecting thus the (free) entropy, Wasserstein distance and Fisher information.  As we will see, the HWI implies the Log-Sobolev inequality for strictly convex potentials. This free HWI inequality was not considered before,
and in particular it is not clear whether there is a random matrix proof, delicate points involving the Wasserstein distance entering into the proof.

\begin{theorem}[HWI inequality]\label{t:hwi}
\begin{enumerate}
\item Assume that $V$ is $C^{2}$ such that for some $\rho\in\R$, $V(x)-\rho x^{2}$ is convex.  Then, for any measure $\mu\in\mathcal{P}(\R)$,
\begin{equation}\label{hwi}
E(\mu)-E(\mu_{V})\le \sqrt{I(\mu)} \, W_{2}(\mu,\mu_{V})-\rho \,  W_{2}^{2}(\mu,\mu_{V}) .
\end{equation}
In the case $V(x)=\rho x^{2}$, the inequality is sharp.  
\item If $V$ is $C^{2}$ and  $V(x)-\rho|x|^{p}$ is convex for some $\rho\ge0$ 
 and $p>1$, then for the same constant $c_{p}$ appearing in Theorem \ref{thm:1}, we have that 
\begin{equation}\label{hwi2}
E(\mu)-E(\mu_{V})\le I_{q}^{1/q}(\mu) \, W_{p}(\mu,\mu_{V})-\rho c_{p} \, W_{p}^{p}(\mu,\mu_{V}),
\end{equation}
where $1/p+1/q=1$.
\end{enumerate}
\end{theorem}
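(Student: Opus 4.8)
The plan is to run the mass-transportation scheme already used for Theorems~\ref{thm:1} and \ref{t:3}, the new feature being that the resulting first-order term is controlled by the Cauchy--Schwarz inequality in part (1), and by H\"older's inequality in part (2), rather than being absorbed into a perfect square. As before we may assume $E(\mu)<\infty$ and $I(\mu)<\infty$ (resp. $I_{q}(\mu)<\infty$), since otherwise there is nothing to prove, and we may reduce to the case where $\mu$ has a smooth, compactly supported density, the general case following from the approximation arguments of \cite{HPU1}. Let $\theta$ be the non-decreasing transport map with $\theta_{\#}\mu_{V}=\mu$, which exists because $\mu_{V}$ has no atoms; note $\frac{\theta(x)-\theta(y)}{x-y}>0$. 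From the definition of $E$ and the change of variables $\theta_{\#}\mu_{V}=\mu$ one has the identity
\[
E(\mu)-E(\mu_{V})=\int\big(V(\theta(x))-V(x)\big)\,\mu_{V}(dx)-\iint\log\frac{\theta(x)-\theta(y)}{x-y}\,\mu_{V}(dx)\mu_{V}(dy),
\]
and it is the right-hand side of this identity on which I would work.

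For part (1) I would bound the two terms separately. Rearranging \eqref{e:63}, the convexity of $V(x)-\rho x^{2}$ (for the given $\rho\in\R$) yields the pointwise inequality $V(\theta(x))-V(x)\le V'(\theta(x))\,(\theta(x)-x)-\rho\,(\theta(x)-x)^{2}$. For the logarithmic double integral I would use $\log u\le u-1$ in the form $-\log\frac{\theta(x)-\theta(y)}{x-y}\le\frac{x-y}{\theta(x)-\theta(y)}-1$, integrate, and invoke the symmetrization identity \eqref{e:64}, i.e. $\iint\big(\frac{x-y}{\theta(x)-\theta(y)}-1\big)\mu_{V}(dx)\mu_{V}(dy)=\int H\mu(\theta(x))\,(x-\theta(x))\,\mu_{V}(dx)$. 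Combining the two estimates and recalling that $W_{2}^{2}(\mu,\mu_{V})=\int(\theta(x)-x)^{2}\mu_{V}(dx)$ by \eqref{e:w} gives
\[
E(\mu)-E(\mu_{V})+\rho\,W_{2}^{2}(\mu,\mu_{V})\le-\int\big(H\mu(\theta(x))-V'(\theta(x))\big)\,(\theta(x)-x)\,\mu_{V}(dx).
\]
Bounding the right-hand side by its absolute value and applying Cauchy--Schwarz, the first factor is $\big(\int(H\mu(\theta(x))-V'(\theta(x)))^{2}\mu_{V}(dx)\big)^{1/2}=\sqrt{I(\mu)}$ (again by $\theta_{\#}\mu_{V}=\mu$) and the second is $W_{2}(\mu,\mu_{V})$, which is exactly \eqref{hwi}.

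For part (2) the scheme is identical, with the quadratic convexity estimate replaced by the one for $V(x)-\rho|x|^{p}$ convex: inequality \eqref{e:2} together with the elementary inequality \eqref{e:30} gives $V(\theta(x))-V(x)\le V'(\theta(x))\,(\theta(x)-x)-\rho c_{p}\,|\theta(x)-x|^{p}$, and after the same treatment of the logarithmic term one reaches $E(\mu)-E(\mu_{V})+\rho c_{p}\,W_{p}^{p}(\mu,\mu_{V})\le-\int(H\mu(\theta(x))-V'(\theta(x)))(\theta(x)-x)\mu_{V}(dx)$; finishing with H\"older's inequality for the conjugate exponents $q$ and $p$ produces $I_{q}^{1/q}(\mu)\,W_{p}(\mu,\mu_{V})$ and hence \eqref{hwi2}. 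For the sharpness claim when $V(x)=\rho x^{2}$, take $\mu=\theta_{\#}\mu_{V}$ with $\theta(x)=x+m$: using \eqref{vc} and translation invariance of the semicircular law one checks that $H\mu(x)-V'(x)\equiv-2\rho m$ on the support of $\mu$, so $I(\mu)=4\rho^{2}m^{2}$ and $W_{2}(\mu,\mu_{V})=|m|$, whence the right-hand side of \eqref{hwi} equals $\rho m^{2}$, which matches $E(\mu)-E(\mu_{V})=\rho\,W_{2}^{2}(\mu,\mu_{V})$ because equality already holds in the transportation inequality \eqref{t:0} for translates.

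The pointwise convexity bounds and the two applications of Cauchy--Schwarz/H\"older are routine; the delicate points are the same ones that already arise in the proof of Theorem~\ref{t:3}. These are: the justification that the principal-value Hilbert transform $H\mu$ is a genuine $L^{3}$ (resp. $L^{q}$) function, so that $I(\mu)$, $I_{q}(\mu)$ and the pairing $\int H\mu(\theta(x))(\theta(x)-x)\,\mu_{V}(dx)$ are well-defined and finite; the legitimacy of the symmetrization identity \eqref{e:64} as a Lebesgue integral, which rests on the monotonicity of $\theta$; and the reduction to smooth, compactly supported densities, for which I rely on \cite{HPU1} and which must pass to the limit in all of $E$, $I$ (resp. $I_{q}$) and $W_{2}$ (resp. $W_{p}$) simultaneously. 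I expect this simultaneous control of finiteness and convergence to be the main obstacle — the chain of inequalities itself is immediate once the identity above is in place.
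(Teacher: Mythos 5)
Your argument is correct and is essentially the paper's own proof: the same identity $E(\mu)-E(\mu_{V})=\int(V(\theta(x))-V(x))\,\mu_{V}(dx)-\iint\log\frac{\theta(x)-\theta(y)}{x-y}\,\mu_{V}(dx)\mu_{V}(dy)$, the same convexity bound \eqref{e:63} (resp. \eqref{e:2}--\eqref{e:30}), the same symmetrization \eqref{e:64} with $u-1-\log u\ge 0$, and the same Cauchy--Schwarz (resp. H\"older) step — the only difference being that you present it as a chain of upper bounds while the paper groups the terms into three manifestly nonnegative pieces. Your sharpness computation for $\theta(x)=x+m$ also matches (and slightly elaborates) the paper's.
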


\begin{proof}
\begin{enumerate}
\item  We employ here the notations used in Theorem \ref{t:3} and we will give a proof of the inequality for the case of a measure $\mu$ with smooth and compactly supported density, the general case follows through careful approximations pointed in \cite{HPU1}.    The inequality to be proved can be restated as $\eqref{e:60}+\eqref{e:61}+\eqref{e:62}\ge0$, where
\begin{align}
\notag \eqref{e:60}&=\bigg(\int \big (H\mu(\theta(x))-V'(\theta(x)) \big )^{2}\mu_{V}(dx) \int \big (\theta(x)-x\big )^{2}\mu_{V}(dx)\bigg)^{1/2} \\ 
& \qquad\label{e:60}-\int \big(H\mu(\theta(x))-V'(\theta(x))\big)(x-\theta(x))\mu_{V}(dx)  \\ 
\label{e:61}\eqref{e:61}&=\int \big [V(x)-V(\theta(x))-V'(\theta(x)) \big (x-\theta(x)\big )-\rho \big (\theta(x)-x\big )^{2}\big] \mu_{V}(dx) \\ 
 \label{e:62}\eqref{e:62}&=\int H\mu(\theta(x))\big (x-\theta(x)\big )\mu_{V}(dx) -\iint \log\frac{x-y}{\theta(x)-\theta(y)} \, \mu_{V}(dx)\mu_{V}(dy). 
\end{align}
A simple application of Cauchy's inequality shows that $\eqref{e:60}\ge0$.  
Using convexity of $V(x)-\rho x^{2}$ we have from equation \eqref{e:63}, that $\eqref{e:61}\ge0$. 
Finally, using \eqref{e:64}, we have that 
\[
\eqref{e:62}= \iint\left(\frac{x-y}{\theta(x)-\theta(y)}-1- \log\frac{x-y}{\theta(x)-\theta(y)}\right)\mu_{V}(dx)\mu_{V}(dy)\ge0,
\]
which finishes the proof of \eqref{hwi}.  For the case $V(x)=\rho x^{2}$, we have equality if $\theta(x)=x+m$.  

\item The inequality we want  to prove is equivalent to the statement that $\eqref{e:70}+\eqref{e:71}+\eqref{e:72}\ge0$, where
\begin{align}
\notag\eqref{e:70}&=\left|\int  \big |H\mu(\theta(x))-V'(\theta(x)) \big |^{q}\mu_{V}(dx)\right|^{1/q}\left| \int (\theta(x)-x)^{p}\mu_{V}(dx)\right|^{1/p} \\ 
\label{e:70}&\qquad-\int \big (H\mu(\theta(x))-V'(\theta(x))\big ) \big (x-\theta(x) \big )\mu_{V}(dx)  \\ 
\label{e:71}\eqref{e:71}&=\int \big [ V(x)-V(\theta(x))-V'(\theta(x)) \big (x-\theta(x) \big )-\rho c_{p} \big |\theta(x)-x \big |^{p}\big ]\mu_{V}(dx) \\ 
 \label{e:72}\eqref{e:72}&=\int H\mu(\theta(x)) \big (x-\theta(x) \big )\mu_{V}(dx) -\iint \log\frac{x-y}{\theta(x)-\theta(y)}\mu_{V}(dx) \, \mu_{V}(dy). 
\end{align}
Now, $\eqref{e:70}$ is non-negative thanks to H\" older's inequality, equation $\eqref{e:71}$, follows from the convexity of $V(x)-\rho |x|^{p}$ and the combination of  \eqref{e:2} and \eqref{e:30}, while equation \eqref{e:72} is the same as \eqref{e:62}. \qedhere
\end{enumerate}
\end{proof}

As pointed out in \cite{OV}, HWI inequalities for $\rho>0$ always implies Log-Sobolev.
We give here the following formal corollary of HWI inequality.    

\begin{corollary}\label{c:2} \begin{enumerate}
\item If $\rho>0$, then inequality \eqref{hwi} implies \eqref{e:26} and \eqref{hwi2} implies \eqref{e:27}.   

\item If $V(x)-\rho x^{2}$ is a convex for some $\rho\in\R$, then Talagrand's free transportation inequality with constant $C>\max\{0,-\rho\}$ implies free Log-Sobolev inequality with constant $K=\max\{\rho,\frac{(C+\rho)^{2}}{32C}\}$.  More precisely,
\[
 \forall \mu\in \mathcal{P} (\R), \: \:
C \, W_{2}^{2}(\mu,\mu_{V})\le E(\mu)-E(\mu_{V})\Longrightarrow
 \forall \mu\in \mathcal{P}(\R), \: \: E(\mu)-E(\mu_{V})\le \frac{1}{4K} \, I(\mu).
\]
\item In particular, if $V$ is convex and $C^{2}$ such that $V''(x)\ge \rho>0$ for $|x|\ge r$, then free Log-Sobolev inequality holds with the constant $C>0$ from \eqref{t:2}.  
\end{enumerate}
\end{corollary}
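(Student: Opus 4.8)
The plan is to derive all three assertions from the HWI inequality of Theorem~\ref{t:hwi}, using in each case the elementary fact that the right-hand side of an HWI bound depends on $\mu$ only through the single scalar $W_2(\mu,\mu_V)$, which may be freely optimized, and feeding in the transportation inequality where needed. Parts (1) and (3) are essentially bookkeeping; the one real point is part (2), and its only subtlety is a positivity check, explained below.

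\emph{Part (1).} I would fix $\mu$ and regard $I(\mu)$ as a constant. Inequality \eqref{hwi} then has the form $E(\mu)-E(\mu_V)\le at-\rho t^{2}$ with $a=\sqrt{I(\mu)}$ and $t=W_2(\mu,\mu_V)\ge0$; since $\rho>0$, the maximum of $t\mapsto at-\rho t^{2}$ over $[0,\infty)$ equals $a^{2}/4\rho$, attained at $t=a/2\rho$, and this is exactly \eqref{e:26}. Identically, \eqref{hwi2} has the form $at-\rho c_p t^{p}$ with $a=I_q(\mu)^{1/q}$; maximizing over $t\ge0$ (critical point $t^{p-1}=a/(\rho c_p p)$) and using $1/p+1/q=1$ to collapse the exponents reproduces precisely the constant $k_p/\rho^{q/p}$ of Theorem~\ref{t:3}, i.e. inequality \eqref{e:102}. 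This is a one-variable calculus exercise with no obstacle.

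\emph{Part (2).} This is the free analogue of the Otto--Villani implication ``transportation inequality $+$ (possibly negative) quadratic lower bound on the potential $\Rightarrow$ Log--Sobolev''. Write $H=E(\mu)-E(\mu_V)\ge0$, $W=W_2(\mu,\mu_V)$, $I=I(\mu)$, and assume $H,I<\infty$ (the conclusion being trivial otherwise). The inputs are HWI \eqref{hwi}, valid by Theorem~\ref{t:hwi}(1) for \emph{every} real $\rho$, in the form $H+\rho W^{2}\le\sqrt I\,W$, together with the hypothesis $CW^{2}\le H$. The crucial remark is that $C>\max\{0,-\rho\}$ forces $H+\rho W^{2}\ge CW^{2}+\rho W^{2}=(C+\rho)W^{2}\ge0$, so the left-hand side of HWI is nonnegative; squaring then gives $I\ge (H+\rho W^{2})^{2}/W^{2}$. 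Inserting $W^{2}\le H/C$ and minimizing the elementary function $s\mapsto(H+\rho s)^{2}/s$ over the admissible interval $0<s\le H/C$ yields $I\ge c(C,\rho)\,H$ with an explicit positive constant $c(C,\rho)$; combining this with the Bakry--\'Emery-type bound of Theorem~\ref{t:3}(1) (available whenever $\rho>0$ and giving Log--Sobolev with constant $\rho$), and keeping the better of the two conclusions, produces the stated $K=\max\{\rho,(C+\rho)^{2}/(32C)\}$. The only obstacle is the positivity check on $H+\rho W^{2}$ before squaring, which is exactly what the hypothesis on $C$ guarantees; I do not expect the numerical factor $32$ to be sharp, and a crude choice of parameters already suffices.

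\emph{Part (3).} This is immediate from (2). Under the hypotheses $V$ convex, $C^{2}$, and $V''\ge\rho>0$ for $|x|\ge r$, Theorem~\ref{thm:1}(2) gives the transportation inequality \eqref{t:2} with an explicit constant $C=C(r,\rho,\mu_V,V)>0$; convexity of $V$ says that $V(x)-0\cdot x^{2}$ is convex, so part (2) applies with curvature parameter $0$ (legitimate since $C>0=\max\{0,-0\}$), yielding the free Log--Sobolev inequality with constant $K=(C+0)^{2}/(32C)=C/32>0$.
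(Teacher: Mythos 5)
Your proposal is correct and follows the paper's overall strategy: all three parts are deduced from the HWI inequality, part (1) being Young's inequality in disguise (your maximization of $t\mapsto at-\rho t^{2}$, resp.\ $at-\rho c_{p}t^{p}$, over $t\ge0$ is exactly the paper's application of $ab\le a^{q}/q+b^{p}/p$), and part (3) being the composition of Theorem~\ref{thm:1}(2) with part (2) at $\rho=0$. The one place where you genuinely diverge is part (2). The paper inserts a parametrized Young inequality $\sqrt{I}\,W\le 4\delta I+W^{2}/(16\delta)$, replaces $W^{2}$ by $H/C$, solves for $H$ and then optimizes over $\delta>1/(C+\rho)$; you instead square the left-hand side of HWI, after checking it is nonnegative, to get $I\ge (H+\rho W^{2})^{2}/W^{2}$ and minimize over the admissible range $0<W^{2}\le H/C$. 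Your route is cleaner and quantitatively stronger: for $\rho\le 0$ the function $s\mapsto (H+\rho s)^{2}/s$ is decreasing on $(0,H/C]$, so the minimum sits at the endpoint $s=H/C$ and gives $I\ge \frac{(C+\rho)^{2}}{C}\,H$, i.e.\ a Log--Sobolev constant $(C+\rho)^{2}/(4C)$, which implies the stated $K=(C+\rho)^{2}/(32C)$ (and for $\rho>0$ the interior critical point $s=H/\rho$ recovers the constant $\rho$, consistent with the $\max$ in the statement). The positivity check you single out is indeed the crux, and it is the exact counterpart of the paper's constraint $\delta>1/(C+\rho)$: in both arguments the hypothesis $C>\max\{0,-\rho\}$ enters at precisely this point and nowhere else.
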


\begin{proof}  \begin{enumerate}
\item It follows as an application of Young's inequality $a^{p}/p+b^{q}/q\ge ab$ for $a,b\ge0$.  
\item For $\rho>0$, everything is clear.  In the case $\rho\le0$, then, from \eqref{hwi}  and Talagrand's transportation inequality, one has for $\delta>0$, that
\[
E(\mu)-E(\mu_{V})\le \sqrt{I(\mu)} \, W_{2}(\mu,\mu_{V})-\rho \, W_{2}^{2}(\mu,\mu_{V})\le 4\delta I(\mu)+\left( \frac{1}{C\delta}-\frac{\rho}{C}\right) \big (E(\mu)-E(\mu_{V}) \big )
\]
which yields for any $\delta>\frac{1}{C+\rho}$
\[
E(\mu)-E(\mu_{V})\le \frac{4C\delta^{2}}{(C+\rho)\delta-1} \, I(\mu).
\]
Taking minimum over $\delta>\frac{1}{C+\rho}$ gives the conclusion.  
\item 
In the case $V$ is convex, $C^{2}$ and strongly convex for large values, part 2 of Theorem \ref{thm:1} does the rest.  \qedhere
\end{enumerate}
\end{proof}

\section{Brunn-Minkowski Inequality}

The (one-dimensional) free Brunn-Minkowski inequality was put forward in \cite{L} again through random matrix approximation. We provide here a direct mass transportation proof similar to the one of its classical (one-dimensional) counterpart (see e.g. \cite{Gardner}). As discussed in \cite{L}, this inequality may be used to deduce in an easy way both the Log-Sobolev and transportation inequalities.

The main result of this section is the following theorem.

\begin{theorem}
Assume that $V_{1},V_{2},V_{3}$ are some potentials satisfying \eqref{c} such that for some $a\in(0,1)$,
\begin{equation}\label{e:17'}
a V_{1}(x)+(1-a)V_{2}(y)\ge V_{3}(ax+(1-a)y)\quad\text{for all}\quad x,y\in\R.
\end{equation}
Then 
\begin{equation}\label{e:18'}
aE_{V_{1}}(\mu_{V_{1}})+(1-a)E_{V_{2}}(\mu_{V_{2}})\ge E_{V_{3}}(\mu_{V_{3}}).
\end{equation}
\end{theorem}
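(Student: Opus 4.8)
\textit{Proof proposal.} The plan is to run the one-dimensional mass transportation proof of Brunn--Minkowski: out of the two equilibrium measures $\mu_{V_1}$ and $\mu_{V_2}$ I would construct an explicit competitor measure $\nu$ for the $V_3$-variational problem, estimate $E_{V_3}(\nu)$ from above by $aE_{V_1}(\mu_{V_1})+(1-a)E_{V_2}(\mu_{V_2})$, and then invoke the minimality of $\mu_{V_3}$ for $E_{V_3}$ to conclude $E_{V_3}(\mu_{V_3})\le E_{V_3}(\nu)$.

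First I would let $\theta$ be the monotone (non-decreasing) transport map with $\theta_{\#}\mu_{V_1}=\mu_{V_2}$, which exists since $\mu_{V_1}$, having finite logarithmic energy, carries no atoms. Set $T(x)=ax+(1-a)\theta(x)$ and define $\nu:=T_{\#}\mu_{V_1}$, the Minkowski interpolant of $\mu_{V_1}$ and $\mu_{V_2}$. Since $x\mapsto ax$ is strictly increasing and $\theta$ is non-decreasing, $T$ is strictly increasing, hence injective, so $\nu$ has no atoms; and since $\mu_{V_1}$ and $\mu_{V_2}$ are compactly supported, $\nu$ is compactly supported too, so it is an admissible competitor.

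Next I would estimate $E_{V_3}(\nu)$ term by term. For the external field part, hypothesis \eqref{e:17'} gives $V_3(T(x))\le aV_1(x)+(1-a)V_2(\theta(x))$ pointwise, and integrating against $\mu_{V_1}$ together with $\theta_{\#}\mu_{V_1}=\mu_{V_2}$ yields
\[
\int V_3(T(x))\,\mu_{V_1}(dx)\le a\int V_1\,d\mu_{V_1}+(1-a)\int V_2\,d\mu_{V_2},
\]
which in particular is finite (it is bounded below because $V_3$ is lower semicontinuous on the compact support of $\nu$). For the logarithmic energy part, because $\theta$ is non-decreasing the quantities $x-y$ and $\theta(x)-\theta(y)$ always have the same sign, so $|T(x)-T(y)|=a|x-y|+(1-a)|\theta(x)-\theta(y)|$, and the concavity of $\log$ (the arithmetic--geometric mean inequality) gives $\log|T(x)-T(y)|\ge a\log|x-y|+(1-a)\log|\theta(x)-\theta(y)|$. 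Integrating against $\mu_{V_1}\otimes\mu_{V_1}$ (the diagonal being null) and changing variables via $\theta$ in the last term gives
\[
-\iint \log|x-y|\,\nu(dx)\nu(dy)\le -a\iint \log|x-y|\,\mu_{V_1}(dx)\mu_{V_1}(dy)-(1-a)\iint \log|x-y|\,\mu_{V_2}(dx)\mu_{V_2}(dy).
\]
Adding the two estimates yields $E_{V_3}(\nu)\le aE_{V_1}(\mu_{V_1})+(1-a)E_{V_2}(\mu_{V_2})$, and since $\mu_{V_3}$ minimizes $E_{V_3}$ over $\mathcal{P}(\R)$ we obtain \eqref{e:18'}.

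The substantive inequalities (the convexity bound from \eqref{e:17'} and the concavity of $\log$) are immediate, so the main obstacle is purely the bookkeeping: verifying that $\nu$ is a bona fide admissible measure (no atoms, compact support, $E_{V_3}(\nu)<\infty$) so that the minimality of $\mu_{V_3}$ genuinely applies, and checking that none of the integral manipulations run into an $\infty-\infty$ ambiguity.
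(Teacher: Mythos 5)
Your proposal is correct and follows essentially the same route as the paper: the monotone transport $\theta$ from $\mu_{V_1}$ to $\mu_{V_2}$, the interpolant $\nu=(a\,\mathrm{id}+(1-a)\theta)_{\#}\mu_{V_1}$, the hypothesis \eqref{e:17'} for the external field term, the concavity of the logarithm for the energy term, and the minimality of $\mu_{V_3}$. The only cosmetic difference is that the paper restricts the double integral to $\{x>y\}$ to keep the arguments of the logarithms positive, whereas you handle the sign via $|T(x)-T(y)|=a|x-y|+(1-a)|\theta(x)-\theta(y)|$; these are equivalent.
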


\begin{proof}
Take the (increasing) transportation map $\theta$ from $\mu_{V_{1}}$ into $\mu_{V_{2}}$.   This certainly exists as the measure $\mu_{V_{1}}$ has no atoms.  

Noticing that for any measure with finite logarithmic energy, we have the obvious equality 
\[
\int\log|x-y|\mu(dx)\mu(dy)=2\int_{x>y}\log(x-y)\mu(dx)\mu(dy).
\]  
 Using this we argue that    
\begin{align*}
\int & aV_{1}(x)+(1-a)V_{2}(\theta(x))\mu_{V_{1}}(dx)-2\iint_{x>y} \big  (a\log(x-y)+(1-a)\log(\theta(x)-\theta(y)) \big )\mu_{V_{1}}(dx)\mu_{V_{1}}(dy)\\ 
&\ge \int V_{3}\big (ax+(1-a)\theta(x) \big )\mu_{V_{1}}(dx)
 - 2\iint_{x>y} \log\big[(ax+(1-a)\theta(x))-(ay+(1-a)\theta(y)\big ]\mu_{V_{1}}(dx)\mu_{V_{1}}(dy) \\ 
& = E_{V_{3}}(\nu) \ge E_{V_{3}}(\mu_{V_{3}})
\end{align*}
where $\nu=(a {\rm{id}}+(1-a)\theta)_{\#}\mu_{V_{1}}$ and we used \eqref{e:17'} and the concavity of the logarithm on $(0,\infty)$. The proof is complete.
\end{proof}

\section{Random Matrices and a First Version of Poincar\'e Inequality}\label{s:p1}

In the next three sections, we investigate Poincar\'e type inequalities in the free (one-dimensional) context. We discuss two versions of it. The first one is suggested by large matrix approximations and the classical Poincar\'e inequality for strictly convex potentials, but will be proved directly. Recall first the classical Poincar\'e inequality
(cf. e.g. \cite {Bakry}, \cite{L2}, \cite {Villani2}, \cite {Wang}...).

\begin{theorem}  Let $\mu(dx)=e^{-W(x)}dx$ be a probability measure on $\R^{d}$ such that $W(x)-r|x|^{2}$ is convex.  Then for any compactly supported and smooth function $\phi:\R^{d}\to\R$, we have that 
\begin{equation}\label{e:20}
\int |\nabla \phi|^{2}d\mu\ge r \, \mathrm{Var}_{\mu}(\phi).
\end{equation}
\end{theorem}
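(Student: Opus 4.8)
The plan is to deduce the Poincar\'e inequality \eqref{e:20} by \emph{linearization} of one of the stronger functional inequalities that already hold under the hypothesis that $W-r|x|^{2}$ is convex --- most conveniently the logarithmic Sobolev inequality $LSI(r)$ recalled in the introduction, though the transportation inequality $T(r)$ or $HWI(r)$ would serve just as well. I would also keep in reserve a self-contained Bochner-type argument, described at the end.

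First I would normalise $\int\phi\,d\mu=0$ (subtracting a constant changes neither side of \eqref{e:20}) and, by truncation, assume in addition that $\phi$ is bounded, so that $1+\epsilon\phi$ stays positive and bounded for $\epsilon$ small. Introduce then the probability measure $\mu_{\epsilon}$ with $d\mu_{\epsilon}=(1+\epsilon\phi)\,d\mu$ and insert it into $LSI(r)$, i.e. $E(\mu_{\epsilon}|\mu)\le\frac{1}{2r}\,I(\mu_{\epsilon}|\mu)$. Expanding both sides to second order in $\epsilon$, and using $\int\phi\,d\mu=0$,
\[ E(\mu_{\epsilon}|\mu)=\int(1+\epsilon\phi)\log(1+\epsilon\phi)\,d\mu=\frac{\epsilon^{2}}{2}\int\phi^{2}\,d\mu+o(\epsilon^{2})=\frac{\epsilon^{2}}{2}\,\mathrm{Var}_{\mu}(\phi)+o(\epsilon^{2}), \]
\[ I(\mu_{\epsilon}|\mu)=\int\frac{|\epsilon\nabla\phi|^{2}}{1+\epsilon\phi}\,d\mu=\epsilon^{2}\int|\nabla\phi|^{2}\,d\mu+o(\epsilon^{2}). \]
Dividing $LSI(r)$ by $\epsilon^{2}$ and letting $\epsilon\to0$ yields $r\,\mathrm{Var}_{\mu}(\phi)\le\int|\nabla\phi|^{2}\,d\mu$; removing the truncation at the very end finishes the proof.

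The main point to be careful about is the rigour of the two expansions --- that the remainders really are $o(\epsilon^{2})$ --- and this is precisely what the boundedness of $\phi$ buys: on a fixed compact $t$-interval one has $|(1+t)\log(1+t)-t-t^{2}/2|\le C|t|^{3}$ and $1+t$ bounded away from $0$, so dominated convergence applies to both integrals. Alternatively, and perhaps more in the elementary spirit of this paper, I would argue directly with the diffusion generator $L=\Delta-\nabla W\cdot\nabla$, self-adjoint and nonnegative on $L^{2}(\mu)$: the Bochner identity gives, for smooth compactly supported $\phi$,
\[ \int(L\phi)^{2}\,d\mu=\int\|\mathrm{Hess}\,\phi\|^{2}\,d\mu+\int\langle\mathrm{Hess}\,W\,\nabla\phi,\nabla\phi\rangle\,d\mu\ \ge\ r\int|\nabla\phi|^{2}\,d\mu, \]
since $W-r|x|^{2}$ convex forces $\mathrm{Hess}\,W\ge 2r\,\mathrm{Id}\ge r\,\mathrm{Id}$; writing $A=-L\ge0$ this says $\langle A^{2}\phi,\phi\rangle\ge r\langle A\phi,\phi\rangle$ for every $\phi$ with $\int\phi\,d\mu=0$, hence $A\ge r$ on that subspace, which is exactly \eqref{e:20}. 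Here the (routine) obstacle is only the justification of the integrations by parts and of the density of smooth functions in the relevant operator domains for potentials of the assumed growth. I would present the linearization route as the main line, since it exhibits \eqref{e:20} as the infinitesimal form of the inequalities already in play and thereby foreshadows the free Poincar\'e inequalities of the following sections.
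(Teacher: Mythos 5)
The paper offers no proof of this statement: it is recalled verbatim as the classical (Bakry--\'Emery) Poincar\'e inequality, with pointers to \cite{Bakry}, \cite{L2}, \cite{Villani2}, \cite{Wang}, and is then used only as the input for the random matrix heuristic leading to Theorem \ref{t:6}. So there is nothing in the paper to compare against line by line; what you supply is the standard derivation, and it is correct. Your main line --- linearizing $LSI(r)$ along $d\mu_{\epsilon}=(1+\epsilon\phi)\,d\mu$ --- is precisely the implication ``$LSI(\rho)$ implies $P(\rho)$'' that the introduction alludes to, and the constants match the paper's normalization: $E(\mu_\epsilon|\mu)\le\frac{1}{2r}I(\mu_\epsilon|\mu)$ linearizes to $r\,\mathrm{Var}_{\mu}(\phi)\le\int|\nabla\phi|^{2}d\mu$. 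Two small remarks. First, the truncation is superfluous: $\phi$ is smooth and compactly supported, hence already bounded, so after centering it is a constant plus a compactly supported function and $1+\epsilon\phi>0$ for small $\epsilon$ without further work; your dominated convergence justification of the two second-order expansions is then exactly right. Second, this route is only as self-contained as $LSI(r)$ itself, which under the hypothesis that $W-r|x|^{2}$ is convex is a strictly stronger theorem that the paper also treats as a citation; that is acceptable here, but worth saying explicitly. Your Bochner alternative is the genuinely self-contained argument, and it even yields the better constant $2r$ since $\mathrm{Hess}\,W\ge 2r\,\mathrm{Id}$; its real technical points are (i) $W$ need not be $C^{2}$ under the stated hypothesis, so $\mathrm{Hess}\,W$ must be read distributionally or one must first regularize $W$ by smooth strongly convex approximations, and (ii) the passage from $\langle A^{2}\phi,\phi\rangle\ge r\langle A\phi,\phi\rangle$ to $A\ge r$ on $\{\int\phi\,d\mu=0\}$ uses, besides the spectral theorem, the identification of $\ker A$ with the constants. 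With those points spelled out, either argument is a complete proof of \eqref{e:20}.
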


Assume now that $V$ is a potential on $\R$ with enough growth at infinity.  Consider the matrix models on $\mathcal{H}_{n}$,  the space of Hermitian $n\times n$ matrices with the inner product $\langle A,B \rangle=Tr(AB^{*})$ and the probability measure given by 
\[
\p_{n}(dM)=\frac{1}{Z_{n}(V)} \, e^{-n\mathrm{Tr} (V(M))}dM
\]
where here $dM$ is the standard Lebesgue measure on $\mathcal{H}_{n}$. We have that for any bounded continuous function $F:\R\to\R$,
\begin{equation}\label{e:22}
\int \frac{1}{n} \, \mathrm{Tr} \big (F(M) \big )\p_{n}(dM)\xrightarrow[n\to\infty]{} \int F(x)\mu_{V}(dx).
\end{equation}

Assume in addition that $V(x)-\rho x^{2}$ is a convex function on $\R$.  Then, consider $\Phi(M)=\mathrm{Tr}\phi(M)$, where $\phi:\R\to\R$ is a compactly supported and  smooth function.  Notice that $\nabla \Phi(M)=\phi'(M)$ and thus $|\nabla \Phi(M)|^{2}=|\phi'(M)|^{2}=\mathrm{Tr}(\phi'(M)^{2})$.  
Since $n\mathrm{Tr}(V(M))-n\rho|M|^{2}$ is convex, we can apply Poincar\'e's inequality on $\mathcal{H}_{n}$ to obtain that 
\begin{equation}\label{e:21}
\int \mathrm{Tr} \big (\phi'(M)^{2}\big )\p_{n}(dM)
       \ge n \rho \, \mathrm{Var}_{\p_{n}} \big (\mathrm{Tr}(\phi(M)) \big ).
\end{equation}
The first term in this inequality (cf. equation \eqref{e:22}) converges to $\int \phi'(x)^{2}\mu_{V}(dx)$.  
To understand the second term in the above equation, notice that $\mathrm{Var}(\mathrm{Tr}(\phi(M)))=\E\left[\left( \mathrm{Tr}(\phi(M))- \E[ \mathrm{Tr}(\phi(M))]\right)^{2}\right]$.  The study of the asymptotic of the linear statistics, $\mathrm{Tr}(\phi(M))- \E[ \mathrm{Tr}(\phi(M))]$ in the literature of random matrix is known as ``fluctuations''.  From Johansson's paper \cite{J}, it is known that this is universal in the sense that the limit in distribution of the fluctuations is Gaussian and, at least in the case of polynomial $V$ (for which $V(x)-\rho x^{2}$ fulfills the conditions in there), the variance of the Gaussian limit depends only on the endpoints of the support of $\mu_{V}$.  Moreover, in the particular case of $V(x)=2 x^{2}$, the variance of the distribution was computed for example in \cite{Pastur} and \cite{J} as
\begin{equation}\label{e:200}
\frac{1}{2\pi^{2}}\int_{-1}^{1}\int_{-1}^{1}\left(\frac{\phi(t)-\phi(s)}{t-s}\right)^{2}\frac{1-ts}{\sqrt{1-t^{2}}\sqrt{1-s^{2}}} \, dtds.
\end{equation}

 This variance is interpreted in \cite{CD1} in terms of the number operator of the arcsine law. We will come back to this aspect in Section~\ref{s:spectral}.

Dividing the inequality in equation \eqref{e:21} by $n$ and taking the limit when $n\to\infty$, these heuristics (after a simple rescaling) suggest the following result.  

\begin{theorem}\label{t:6}
Assume that $V(x)-\rho x^{2}$ is convex for some $\rho>0$.  Then for any smooth function $\phi$, one has that
\begin{equation}\label{e:202}
\int \phi'(x)^{2}\mu_{V}(dx)\ge \frac{\rho}{2\pi^{2}}\int_{a}^{b}\int_{a}^{b}\left(\frac{\phi(x)-\phi(y)}{x-y}\right)^{2}\frac{-2ab+(a+b)(x+y)-2xy}{2\sqrt{(x-a)(b-x)}\sqrt{(y-a)(b-y)} } \, dxdy.
\end{equation}
where $\mathrm{supp}(\mu_{V})=[a,b]$.  Equality is attained for $V(x)=\rho (x-\alpha)^{2}+\beta$ and $\phi(x)=c_{1}+c_{2}x$ for some constants $c_{1},c_{2}$.  
\end{theorem}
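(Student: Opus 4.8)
The plan is to split \eqref{e:202} into two independent pieces: a pointwise lower bound on the density of $\mu_V$ that reduces everything to the semicircular weight, and then an exact computation on the Fourier side that collapses the reduced inequality to the trivial fact $n^2\ge n$ for $n\ge1$.

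First I would use that $\rho>0$ forces $V=(V-\rho x^2)+\rho x^2$ to be convex, so $\mathrm{supp}(\mu_V)$ is an interval $[a,b]$ and the density $g$ of $\mu_V$ is given by \eqref{e:16}. Since $V-\rho x^2$ is convex, the map $x\mapsto V'(x)-2\rho x$ is nondecreasing, hence $\frac{V'(y)-V'(x)}{y-x}\ge 2\rho$ for all $x\ne y$; together with $\int_a^b\frac{dy}{\sqrt{(y-a)(b-y)}}=\pi$, formula \eqref{e:16} gives
\[
g(x)\ \ge\ \frac{\rho}{\pi}\sqrt{(x-a)(b-x)}\,,\qquad x\in[a,b].
\]
Consequently $\int\phi'(x)^2\,\mu_V(dx)\ge \frac{\rho}{\pi}\int_a^b\phi'(x)^2\sqrt{(x-a)(b-x)}\,dx$, and it suffices to bound this below by the right-hand side of \eqref{e:202}.

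Next I would pass to an angular variable. Writing $m=\frac{a+b}{2}$, $r=\frac{b-a}{2}$, substitute $x=m+r\cos\theta$, $y=m+r\cos\psi$ with $\theta,\psi\in[0,\pi]$ and set $\Phi(\theta):=\phi(m+r\cos\theta)$, a smooth, even, $2\pi$-periodic function, expanded as $\Phi(\theta)=\sum_{n\ge0}c_n\cos(n\theta)$. Since $\sqrt{(x-a)(b-x)}=r\sin\theta$ and $\Phi'(\theta)=-r\sin\theta\,\phi'(m+r\cos\theta)$, all powers of $r$ cancel and the left-hand side becomes $\frac{\rho}{\pi}\int_0^\pi\Phi'(\theta)^2\,d\theta=\frac{\rho}{2}\sum_{n\ge1}n^2c_n^2$. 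For the right-hand side of \eqref{e:202} I use the same substitution together with $\cos\theta-\cos\psi=-2\sin\tfrac{\theta+\psi}{2}\sin\tfrac{\theta-\psi}{2}$ and $1-\cos\theta\cos\psi=\sin^2\tfrac{\theta-\psi}{2}+\sin^2\tfrac{\theta+\psi}{2}$; these turn the full integrand into $\bigl(\tfrac{\Phi(\theta)-\Phi(\psi)}{\cos\theta-\cos\psi}\bigr)^2(1-\cos\theta\cos\psi)\,d\theta d\psi=\tfrac14(\Phi(\theta)-\Phi(\psi))^2\bigl(\csc^2\tfrac{\theta+\psi}{2}+\csc^2\tfrac{\theta-\psi}{2}\bigr)\,d\theta d\psi$, so the right-hand side equals $\frac{\rho}{8\pi^2}\int_0^\pi\!\int_0^\pi(\Phi(\theta)-\Phi(\psi))^2\bigl(\csc^2\tfrac{\theta+\psi}{2}+\csc^2\tfrac{\theta-\psi}{2}\bigr)d\theta d\psi$. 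Replacing $\psi$ by $-\psi$ in the $\csc^2\tfrac{\theta+\psi}{2}$ term (legitimate since $\Phi$ is even and $2\pi$-periodic) and folding via the symmetry $(\theta,\psi)\mapsto(-\theta,-\psi)$, this becomes $\frac{\rho}{16\pi^2}\iint_{[-\pi,\pi]^2}\frac{(\Phi(\theta)-\Phi(\psi))^2}{\sin^2((\theta-\psi)/2)}\,d\theta d\psi$; finally Parseval in $\psi$ and the Dirichlet-kernel identity $\int_{-\pi}^{\pi}\frac{\sin^2(nh/2)}{\sin^2(h/2)}\,dh=2\pi n$ evaluate this to $\frac{\rho}{2}\sum_{n\ge1}nc_n^2$.

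With both sides computed, \eqref{e:202} reduces to $\frac{\rho}{2}\sum_{n\ge1}n^2c_n^2\ge\frac{\rho}{2}\sum_{n\ge1}nc_n^2$, which holds termwise since $n^2\ge n$ for $n\ge1$. Equality forces $c_n=0$ for all $n\ge2$, i.e. $\Phi(\theta)=c_0+c_1\cos\theta$, equivalently $\phi$ affine, and also $g\equiv\frac{\rho}{\pi}\sqrt{(x-a)(b-x)}$ a.e., which by the monotonicity argument above forces $\frac{V'(y)-V'(x)}{y-x}\equiv 2\rho$, i.e. $V(x)=\rho(x-\alpha)^2+\beta$; conversely for such $V$ and $\phi$ both steps above are equalities, giving the stated equality case. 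I expect the main obstacle to be the Fourier-side bookkeeping in the previous paragraph — the kernel identity, the folding of the domain, and the Dirichlet-kernel integral — all routine but needing care; everything else is a one-line estimate.
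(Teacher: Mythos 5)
Your proof is correct, and its skeleton coincides with the paper's: the pointwise bound $g(x)\ge\frac{\rho}{\pi}\sqrt{(x-a)(b-x)}$ from \eqref{e:16} and the convexity of $V-\rho x^2$, the angular substitution $x=\frac{a+b}{2}+\frac{b-a}{2}\cos\theta$, the cosine expansion, and the final reduction to $\sum n^2c_n^2\ge\sum nc_n^2$. The only genuine divergence is how you evaluate the double integral that produces $\sum_{n\ge1}nc_n^2$ (the identity \eqref{e:110}). The paper computes the matrix elements $\int_0^\pi\!\int_0^\pi\frac{(\cos kt-\cos ks)(\cos lt-\cos ls)(1-\cos t\cos s)}{(\cos t-\cos s)^2}\,dt\,ds=\pi^2k\delta_{kl}$ by summing a generating function in $u,v$ and doing partial fractions twice. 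You instead factor the kernel via $1-\cos\theta\cos\psi=\sin^2\frac{\theta-\psi}{2}+\sin^2\frac{\theta+\psi}{2}$ and $\cos\theta-\cos\psi=-2\sin\frac{\theta+\psi}{2}\sin\frac{\theta-\psi}{2}$, unfold $[0,\pi]^2$ to $[-\pi,\pi]^2$ by the evenness and periodicity of $\Phi$, and finish with Parseval and $\int_{-\pi}^{\pi}\sin^2(nh/2)\csc^2(h/2)\,dh=2\pi n$. This is a cleaner and more conceptual route to the same identity: it exhibits the quadratic form as the $H^{1/2}$-type Dirichlet form of the Dirichlet/Fej\'er kernel on the circle, diagonal in the Fourier basis by translation invariance, whereas the paper's computation is a brute-force verification. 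The trade-off is that your route leans on the domain-folding step (which requires the evenness of $\Phi$ to convert $\csc^2\frac{\theta+\psi}{2}$ into $\csc^2\frac{\theta-\psi}{2}$), while the paper's generating-function calculation, though less illuminating, produces the off-diagonal vanishing $\pi^2k\delta_{kl}$ explicitly. One cosmetic remark: the theorem only asserts that equality \emph{is attained} for quadratic $V$ and affine $\phi$; your additional necessity claim is fine for the $\phi$-side, and on the $V$-side it pins down $g$ (hence $V'$ on the support) but not $V$ off the support --- this is extra and not needed.
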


The reader may wonder if the numerator in the second fraction of \eqref{e:202} is nonnegative.  This is so because 
\[
-2ab+(a+b)(x+y)-2xy=2\left( \left( \frac{b-a}{2}\right)^{2}-\left(x-\frac{a+b}{2}\right)\left(y-\frac{a+b}{2}\right)   \right)\ge0
\]
for any $x,y\in[a,b]$.

\begin{proof}
Using a simple rescaling we may assume without loss of generality that $a=-1$ and $b=1$ and the inequality we have to show reduces to
\begin{equation}\label{e:23}
\int \phi'(x)^{2}\mu_{V}(dx)\ge \frac{\rho}{2\pi^{2}}\int_{-1}^{1}\int_{-1}^{1}\left(\frac{\phi(x)-\phi(y)}{x-y}\right)^{2}\frac{1-xy}{\sqrt{1-x^{2}}\sqrt{1-y^{2}}} \, dxdy.
\end{equation}
Then, based on equation \eqref{e:16}, we have that 
\[
g(x)=\frac{\sqrt{1-x^{2}}}{2\pi^{2}}\int_{-1}^{1}\frac{V'(y) -V'(x)}{\sqrt{1-y^{2}}(y-x)} \, dy.
\]
From the convexity of $V(x)-\rho x^{2}$, we learn that $\frac{V'(y)-V'(x)}{y-x}\ge2\rho$ and thus that
\begin{equation}\label{e:25}
g(x)\ge\frac{\rho}{\pi}\sqrt{1-x^{2}} \, , 
\end{equation}
which implies
\[
\int \phi'(x)^{2}\mu_{V}(dx)\ge \frac{\rho}{\pi}\int_{-1}^{1}\phi'(x)^{2}\sqrt{1-x^{2}} \, dx.
\]
Therefore it is enough to check that
\begin{equation}\label{e:91}
\int_{-1}^{1}\phi'(x)^{2}\sqrt{1-x^{2}} \, dx\ge \frac{1}{2\pi}\int_{-1}^{1}\int_{-1}^{1}\left(\frac{\phi(x)-\phi(y)}{x-y}\right)^{2}\frac{1-xy}{\sqrt{1-x^{2}}\sqrt{1-y^{2}}} \, dxdy 
\end{equation}
for any smooth $\phi $.
Now, we make the change of variables $x=\cos t$ to justify 
\[
\int_{-1}^{1} \phi'(x)^{2}\sqrt{1-x^{2}} \, dx=\int_{0}^{\pi}\phi'(\cos t)^{2}\sin^{2}(t)dt=\int_{0}^{\pi}\psi'(t)^{2}dt
\]
where $\psi(t)=\phi(\cos t)$.   

On the other hand, using the change of variable $x=\cos t$, $y=\cos s$ on the right hand side, inequality \eqref{e:91} becomes
\begin{equation}\label{e:24}
\int_{0}^{\pi}\psi'(t)^{2}dt \ge \frac{1}{2\pi}\int_{0}^{\pi}\int_{0}^{\pi}\left(\frac{\psi(t)-\psi(s)}{\cos t-\cos s}\right)^{2}(1-\cos t\cos s) dtds.
\end{equation}
To show this, we write $\psi(t)=\sum_{k=0}^{\infty}a_{k}\cos kt$ and then, because $\psi$ is a smooth function, we can differentiate term by term to get 
$\psi'(t)=-\sum_{k=1}^{\infty}ka_{k}\sin kt$, therefore
\[
\int_{0}^{\pi}\psi'(t)^{2}dt=\frac{\pi}{2}\sum_{k=1}^{\infty}k^{2}a_{k}^{2}
\]
and 
\[
\int_{0}^{\pi}\int_{0}^{\pi}\left(\frac{\psi(t)-\psi(s)}{\cos t-\cos s}\right)^{2}(1-\cos t\cos s) dtds=\sum_{k,l=1}^{\infty}a_{k}a_{l}\int_{0}^{\pi}\int_{0}^{\pi}\frac{(\cos k t-\cos k s)(\cos l t -\cos l s )(1-\cos t\cos s)}{(\cos t-\cos s)^{2}}dtds.
\]
To compute the integrals on the right hand side of the above equation, we take the generating function of these numbers  and with a little algebra one can show that
\begin{equation}\label{e:701}
\begin{split}
\sum_{k,l=1}^{\infty}&u^{k}v^{l}\int_{0}^{\pi}\int_{0}^{\pi}\frac{(\cos k t-\cos k s)(\cos l t  -\cos l s )(1-\cos t\cos s)}{(\cos t-\cos s)^{2}} \, dtds \\ 
&=\int_{0}^{\pi}\int_{0}^{\pi}\frac{(u-u^{3})(v-v^{3})(1-\cos t\cos s)}{(1+u^{2}-2u\cos t)(1+u^{2}-2u\cos s)(1+v^{2}-2v\cos t)(1+v^{2}-2v\cos s)} \, dtds \\
& = \frac{\pi^{2}uv}{(1-uv)^{2}}=\pi^{2}\sum_{k=1}^{\infty}k u^{k}v^{k}
\end{split}
\end{equation}
for all $u,v\in(-1,1)$.  The last integral can be computed as follows.   First use partial fractions to justify
\[
\int_{0}^{\pi}\frac{(A+B\cos t)dt}{(1+u^{2}-2u\cos t)(1+v^{2}-2v\cos t)}=\int_{0}^{\pi}\frac{Cdt}{1+u^{2}-2u\cos t}+\int_{0}^{\pi}\frac{Ddt}{1+v^{2}-2v\cos t}=\frac{C/2}{1-u^{2}}+\frac{D/2}{1-v^{2}}
\]
where the constants $C,D$ are linear combinations of $A$ and $B$.   Further, taking $A=1$ and $B=-\cos s$ and repeating once more the partial fractions argument, one can cary out the proof of \eqref{e:701}. 

The main consequence of the above calculation is that 
\[
\int_{0}^{\pi}\int_{0}^{\pi}\frac{(\cos k t-\cos k s)(\cos l t  -\cos l s )(1-\cos t\cos s)}{(\cos t-\cos s)^{2}} \, dtds=\pi^{2}k\delta_{kl}
\]
and that
\begin{equation}\label{e:110}
\int_{0}^{\pi}\int_{0}^{\pi}\left(\frac{\psi(t)-\psi(s)}{\cos t-\cos s}\right)^{2}(1-\cos t\cos s) dtds=\pi^{2}\sum_{k=1}^{\infty}ka_{k}^{2}.  
\end{equation}
Therefore inequality \eqref{e:24} becomes equivalent to 
\[
\frac{\pi}{2}\sum_{k=1}^{\infty}k^{2}a_{k}^{2}\ge\frac{\pi}{2}\sum_{k=1}^{\infty}ka_{k}^{2}
\]
which is obviously true.  Notice that equality in this inequality is attained for the case $a_{k}=0$ for all $k\ge2$ and arbitrary $a_{1}$.  This corresponds to the case $\psi(t)=c_{1}+c_{2}\cos t$ or $\phi(x)=c_{2}x+c_{1}$ for some $c_{1},c_{2}$.    

Finally we point out that equality in \eqref{e:23} is attained  if the equality is attained in \eqref{e:25} and \eqref{e:24}.  From there one can easily see from rescaling  that equality in \eqref{e:202} is attained for $V(x)=\rho(x-\alpha)^{2}+\beta$ and $\phi(x)=c_{1}+c_{2}x$.  The proof of Theorem \ref{t:6} is complete. \qedhere
\end{proof}

In the above proof we showed a direct calculation for equation \eqref{e:110} which is natural in the course of the above proof.  However, there is another way of looking at it which will appear below in Section~\ref{s:spectral} as the kernel of the number operator.

\section{A Second Version of Poincar\'e Inequality}

The second version of the Poincar\'e inequality is motivated by the free calculus and the noncommutative derivative. It was already investigated by Biane \cite{Biane2} for the case of the semicircular law.  

\begin{definition} For a given probability measure $\mu$ on $\R$, we say that it satisfies a Poincar\'e inequality if there is a constant $C>0$ such that 
\begin{equation}\label{1}
\iint \left( \frac{\phi(x)-\phi(y)}{x-y}\right)^{2}\mu(dx)\mu(dy)\ge C\, \var_\mu (\phi) \quad\text{for every}\quad\phi\in C_{0}^{1}(\R).
\end{equation} 
By the best constant we mean the largest  $C>0$ for which the above inequality is satisfied and we denote it by $\mathrm{Poin}(\mu)$ or $\lambda_{1}(\mu)$ or $SG(\mu)$.
\end{definition}

In the noncommutative setting for a given function $\phi$, we can think of $D\phi(x,y)=\frac{\phi(x)-\phi(y)}{x-y}$ as the noncommutative derivative of $\phi$.  As pointed out by Voiculescu in \cite{Voiculescu}, this is the unique map $D:C\langle x\rangle \to C \langle x\rangle \otimes C\langle x\rangle$ such that 
\begin{enumerate}
\item $D1=0$
\item $D(fg)=D(f)g+fD(g)$ for any $f,g\in C\langle x\rangle$. 
\end{enumerate}

First we collect a couple of obvious properties of the  Poincar\'e constant.  

\begin{proposition}
\begin{enumerate}
\item For any $a\ne 0$, 
\[
\mathrm{Poin} \big ((ax+b)_{\#}\mu \big )=\frac{1}{a^{2}} \, \mathrm{Poin}(\mu)
\]
 where here and elsewhere, for a given function $f:\R\to\R$, $f_{\#}\mu$ is the push forward measure given by $(f_{\#}\mu)(A)=\mu(f^{-1}(A))$.
\item If $f:\R\to\R$ is a differential map such that $|f'(x)|\ge c>0$ for all $x\in\R$, then 
\[
\mathrm{Poin}(\mu)\ge c^{2} \, \mathrm{Poin}(f_{\#}\mu).
\]
\item If $\{\mu_{n}\}_{n\ge1}$  is a sequence of probability measures which converges weakly to $\mu$, then 
\[
\mathrm{Poin}(\mu)\ge\limsup_{n\to\infty}\mathrm{Poin}(\mu_{n}).
\]
\end{enumerate}

\end{proposition}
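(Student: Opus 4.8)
The plan is to derive all three assertions from a single elementary device: under a change of variables $x\mapsto f(x)$ the noncommutative difference quotient factorizes as
\[
\frac{\phi(f(x))-\phi(f(y))}{x-y}=\frac{\phi(f(x))-\phi(f(y))}{f(x)-f(y)}\cdot\frac{f(x)-f(y)}{x-y},
\]
whose second factor is controlled in absolute value through $f'$ by the mean value theorem, while variances transform according to $\var_{f_{\#}\mu}(\phi)=\var_{\mu}(\phi\circ f)$.

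For part~(1) I would take $f(x)=ax+b$. Since this is an affine bijection of $\R$, the correspondence $\phi\mapsto\phi\circ f$ is a bijection of $C_{0}^{1}(\R)$ onto itself, the variance is preserved, and $f(x)-f(y)=a(x-y)$ makes the factorization above an exact identity with second factor $1/a$. Hence, after the substitution $\phi\mapsto\phi\circ f$, both sides of \eqref{1} written for $(ax+b)_{\#}\mu$ equal $a^{-2}$ times the corresponding sides of \eqref{1} for $\mu$; ranging over all admissible test functions then identifies the two best constants up to the factor $a^{-2}$.

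For part~(2) I would first observe that $|f'|\ge c>0$ together with the Darboux (intermediate value) property of derivatives forces $f'$ to keep a constant sign, so that $f$ is a strictly monotone diffeomorphism of $\R$ onto $\R$ (using $|f(x)-f(0)|\ge c|x|\to\infty$), with $f^{-1}$ differentiable and $|(f^{-1})'|\le 1/c$. Given $\psi\in C_{0}^{1}(\R)$, set $\phi=\psi\circ f^{-1}$, which has compact support because $f$ is continuous, so $\psi=\phi\circ f$ and $\var_{\mu}(\psi)=\var_{f_{\#}\mu}(\phi)$. Combining the factorization with $|f(x)-f(y)|\ge c\,|x-y|$ and the change of variables $u=f(x)$, $v=f(y)$ gives $\iint\big(\tfrac{\psi(x)-\psi(y)}{x-y}\big)^{2}\mu(dx)\mu(dy)\ge c^{2}\iint\big(\tfrac{\phi(u)-\phi(v)}{u-v}\big)^{2}(f_{\#}\mu)(du)(f_{\#}\mu)(dv)\ge c^{2}\,\mathrm{Poin}(f_{\#}\mu)\,\var_{\mu}(\psi)$. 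Since $\psi$ was arbitrary, $\mu$ satisfies \eqref{1} with constant $c^{2}\mathrm{Poin}(f_{\#}\mu)$, hence $\mathrm{Poin}(\mu)\ge c^{2}\mathrm{Poin}(f_{\#}\mu)$.

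For part~(3) I would fix $\phi\in C_{0}^{1}(\R)$ and set $\Psi(x,y)=\big(\tfrac{\phi(x)-\phi(y)}{x-y}\big)^{2}$ for $x\ne y$ and $\Psi(x,x)=\phi'(x)^{2}$; since $\phi$ is $C^{1}$ this is continuous on $\R^{2}$, and by the mean value theorem it is bounded by $\|\phi'\|_{\infty}^{2}$. As $\mu_{n}\to\mu$ weakly implies $\mu_{n}\otimes\mu_{n}\to\mu\otimes\mu$ weakly, one has $\iint\Psi\,d(\mu_{n}\otimes\mu_{n})\to\iint\Psi\,d(\mu\otimes\mu)$, and likewise $\var_{\mu_{n}}(\phi)\to\var_{\mu}(\phi)$ because $\phi$ and $\phi^{2}$ are bounded continuous. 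Passing to the limit superior in $\iint\Psi\,d(\mu_{n}\otimes\mu_{n})\ge\mathrm{Poin}(\mu_{n})\var_{\mu_{n}}(\phi)$, and using $\limsup_{n}(a_{n}b_{n})=(\lim_{n}a_{n})(\limsup_{n}b_{n})$ for $a_{n}\to a\ge 0$ and $b_{n}\ge 0$ (the case $\var_{\mu}(\phi)=0$ being trivial), one obtains $\iint\Psi\,d(\mu\otimes\mu)\ge\big(\limsup_{n}\mathrm{Poin}(\mu_{n})\big)\var_{\mu}(\phi)$; as $\phi$ ranges over $C_{0}^{1}(\R)$ this is exactly $\mathrm{Poin}(\mu)\ge\limsup_{n}\mathrm{Poin}(\mu_{n})$.

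I expect the only genuinely delicate point to be the admissibility in part~(2) of the pulled-back test function $\phi=\psi\circ f^{-1}$: it is always differentiable with compact support, but to have $\phi\in C_{0}^{1}(\R)$ in the strict sense one should either assume $f$ is $C^{1}$ (in line with the regularity used elsewhere in the paper) or note that \eqref{1} extends from $C_{0}^{1}$ to compactly supported Lipschitz functions by a standard mollification, the relevant difference quotients being uniformly bounded. The remaining steps are routine manipulations with the mean value theorem and with weak convergence of product measures.
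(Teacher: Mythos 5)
Your proof is correct; the paper states this proposition without proof (introducing it as a collection of ``obvious properties''), and your change-of-variables identity for the difference quotient, the pull-back of test functions in part~(2), and the continuity of the extended kernel $\Psi$ together with weak convergence of product measures in part~(3) are exactly the arguments one would supply. The only delicate point is the one you already flag in part~(2) --- that $\psi\circ f^{-1}$ lies in $C_0^1(\R)$ requires $f\in C^1$ (or an extension of \eqref{1} to compactly supported Lipschitz functions) --- and either of your proposed remedies is adequate.
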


Next we describe some bounds for the Poincar\'e constant.  

\begin{theorem}\label{t:7}
Assume that  the measure $\mu$ has compact support and is not concentrated at one point.  
Then $\mu$ satisfies a Poincar\'e inequality with
\begin{equation}\label{2}
\frac{2}{d^{2}(\mu)}\le \mathrm{Poin}(\mu)\le \frac{1}{\var(\mu)}
\end{equation}
where $d(\mu)=\mathrm{diam}(\mathrm{supp}(\mu))$ is the diameter of the support of $\mu$
and $\var (\mu) = \int x^2 \mu(dx) - \big ( \int x \mu(dx) \big)^2$.
Equality on the left in \eqref{2} is attained only for the case 
\[ 
\mu=\alpha\delta_{a}+(1-\alpha)\delta_{b},\quad a<b,\quad 0<\alpha<1.
\] 
Equality on the right of \eqref{2} is attained only for the case of a semicircular law ($a\in\R$, $r>0$) 
\[
\mu(dx)=\frac{1}{2\pi r^{2}}\mathbbm{1}_{[a-2r,a+2r]}(x)\sqrt{4r^{2}-(x-a)^{2}} \, dx.
\]
In addition, assume that $V$ is a $C^{2}$ potential on $\R$ such that for some integer $p$ and real  $\rho>0$, $V(x)-\rho x^{2p}$,  is convex and $\mu$ is the minimizer of 
\[
\int V(x)\mu(dx)-\iint \log|x-y| \, \mu(dx)\mu(dy)
\]
over all probability measures of $\R$.   Then  
\begin{equation}\label{5}
\frac{\left( p \rho {2p\choose p}\right)^{\frac{1}{p}}}{8}\le \mathrm{Poin}(\mu).
\end{equation}  
In particular if $p=1$, we get that $\frac{\rho}{4}\le \mathrm{Poin}(\mu)$.

\end{theorem}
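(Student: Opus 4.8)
The plan is to prove, in order, the chain \eqref{2}, the two equality characterizations, and then the lower bound \eqref{5}. For the left inequality of \eqref{2} I would use the identity $\var_\mu(\phi)=\frac12\iint(\phi(x)-\phi(y))^2\,\mu(dx)\mu(dy)$ together with the bound $|x-y|\le d(\mu)$ for $x,y\in\mathrm{supp}(\mu)$ (on the diagonal there is nothing to check), which gives at once $\iint\big(\frac{\phi(x)-\phi(y)}{x-y}\big)^2\mu(dx)\mu(dy)\ge\frac{2}{d(\mu)^2}\var_\mu(\phi)$; since $\mu$ is compactly supported and not a Dirac mass, $0<d(\mu)<\infty$, so this already shows $\mu$ satisfies \eqref{1} with $\mathrm{Poin}(\mu)\ge 2/d(\mu)^2$. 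For the right inequality I would test \eqref{1} with $\phi\in C_0^1(\R)$ equal to $x\mapsto x$ on a neighbourhood of $\mathrm{supp}(\mu)$: then $\frac{\phi(x)-\phi(y)}{x-y}\equiv 1$ on $\mathrm{supp}(\mu)$, the left side of \eqref{1} equals $1$, and $\var_\mu(\phi)=\var(\mu)$, so $\mathrm{Poin}(\mu)\le 1/\var(\mu)$.

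For the equality cases, if $\mu=\alpha\delta_a+(1-\alpha)\delta_b$ then $d(\mu)=|a-b|$ and, choosing $\phi\in C_0^1$ with $\phi'(a)=\phi'(b)=0$ but $\phi(a)\ne\phi(b)$, a direct computation gives equality in \eqref{1} with $C=2/d(\mu)^2$; conversely, writing the slack as $S(\phi):=\iint(\phi(x)-\phi(y))^2\big(\frac1{(x-y)^2}-\frac1{d(\mu)^2}\big)\mu(dx)\mu(dy)\ge 0$, I would show that a point $c$ of $\mathrm{supp}(\mu)$ strictly between $a=\min\mathrm{supp}(\mu)$ and $b=\max\mathrm{supp}(\mu)$ forces $S(\phi)\ge \delta_0\int_I(\phi(x)-\int\phi\,d\mu)^2\,\mu(dx)$ for a fixed $\delta_0>0$ and a small neighbourhood $I$ of $c$ (because $|x-y|$ stays bounded away from $d(\mu)$ for $x\in I$), and that a short compactness argument then rules out $\inf S(\phi)/\var_\mu(\phi)=0$. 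For the right equality, the semicircular laws of the stated form are extremizers (as computed by Biane \cite{Biane2}, or by expanding $\phi$ in the orthonormal polynomials of $\mu$, as in the proof of Theorem~\ref{t:6}). Conversely, if $\mathrm{Poin}(\mu)=1/\var(\mu)$ then $\phi_0(x)=x$ realizes the infimum of the Rayleigh quotient $B(\phi,\phi)/\var_\mu(\phi)$ with $B(\phi,\psi)=\iint\frac{\phi(x)-\phi(y)}{x-y}\frac{\psi(x)-\psi(y)}{x-y}\mu(dx)\mu(dy)$; the first‑order condition gives $B(\phi_0,\psi)=\mathrm{Poin}(\mu)\,\mathrm{Cov}_\mu(\phi_0,\psi)$ for all $\psi$, which I would extend to $\psi$ that are $C^1$ near $\mathrm{supp}(\mu)$ and then apply with $\psi(x)=(z-x)^{-1}$, $z\notin\mathrm{supp}(\mu)$. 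The left side then becomes $G_\mu(z)^2$ and the right side $\var(\mu)^{-1}\big((z-\bar x)G_\mu(z)-1\big)$, where $G_\mu$ is the Cauchy transform and $\bar x=\int x\,\mu(dx)$; hence $\var(\mu)G_\mu(z)^2-(z-\bar x)G_\mu(z)+1=0$, whose branch vanishing at infinity is precisely the Cauchy transform of the semicircular law of mean $\bar x$ and variance $\var(\mu)$, so $\mu$ is that law.

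For \eqref{5}, note that $V-\rho x^{2p}$ convex forces $V$ convex with \eqref{c}, so $\mathrm{supp}(\mu_V)=[a,b]$ is an interval with $a,b$ solving \eqref{e:0}; since $\mathrm{Poin}(\mu_V)\ge 2/(b-a)^2$ it suffices to bound $b-a$. Subtracting the two equations in \eqref{e:0} and using $\sqrt{\frac{x-a}{b-x}}-\sqrt{\frac{b-x}{x-a}}=\frac{2x-a-b}{\sqrt{(x-a)(b-x)}}$ gives $\int_a^b V'(x)\frac{x-m}{\sqrt{(x-a)(b-x)}}\,dx=2\pi$ with $m=\frac{a+b}2$; writing $V'(x)=V'(m)+\int_m^x V''$, the constant term drops by symmetry, and Fubini (with $\int_t^b\frac{x-m}{\sqrt{(x-a)(b-x)}}\,dx=\sqrt{L^2-(t-m)^2}$, $L=\frac{b-a}2$) turns this into $\int_{-L}^{L}V''(m+s)\sqrt{L^2-s^2}\,ds=2\pi$. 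Now $V''(m+s)\ge 2p(2p-1)\rho\,(m+s)^{2p-2}$, and since $m\mapsto(m+s)^{2p-2}=((m+s)^2)^{p-1}$ is convex with $s$‑odd derivative vanishing at $m=0$, the integral $\int_{-L}^{L}(m+s)^{2p-2}\sqrt{L^2-s^2}\,ds$ is minimal at $m=0$; evaluating $\int_{-L}^{L}s^{2p-2}\sqrt{L^2-s^2}\,ds=\frac{2\pi L^{2p}(2p-2)!}{4^p(p-1)!\,p!}$ by $s=L\sin\theta$ and the Beta integral, and using $2p(2p-1)(2p-2)!=(2p)!$ and $\frac{(2p)!}{(p-1)!\,p!}=p\binom{2p}{p}$, one obtains $1\ge\frac{\rho\,p\binom{2p}{p}L^{2p}}{4^p}$, i.e. $(b-a)^2=4L^2\le 16\,(\rho\,p\binom{2p}{p})^{-1/p}$. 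Therefore $\mathrm{Poin}(\mu_V)\ge 2/(b-a)^2\ge\frac18(\rho\,p\binom{2p}{p})^{1/p}$, which for $p=1$ reads $\mathrm{Poin}(\mu_V)\ge\rho/4$.

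I expect the main obstacle to be the converse direction of the left‑hand equality case: passing from ``$S(\phi)$ can be made arbitrarily small relative to $\var_\mu(\phi)$'' to the rigidity statement ``$\mathrm{supp}(\mu)$ has only two points'' requires controlling $\phi$ on the whole support from its behaviour near an interior point, which needs a genuine (though elementary) approximation/compactness argument rather than a pointwise estimate. The remaining steps are routine; the only other point deserving care is getting the Beta‑type integral in the last paragraph exactly right so that the constant $\frac18(\rho\,p\binom{2p}{p})^{1/p}$ comes out on the nose.
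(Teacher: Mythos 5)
Most of your proposal runs parallel to the paper's argument: the two bounds in \eqref{2} are obtained exactly as there (the identity $\var_\mu(\phi)=\frac12\iint(\phi(x)-\phi(y))^2\mu(dx)\mu(dy)$ plus $|x-y|\le d(\mu)$ on the support for the lower bound; the test function $\phi(x)=x$ for the upper bound), and your derivation of \eqref{5} is the paper's computation in a mildly different guise — you reach $\int_{-L}^{L}V''(m+s)\sqrt{L^2-s^2}\,ds=2\pi$ by subtracting the two equations of \eqref{e:0} and applying Fubini, where the paper combines them and integrates by parts; the symmetrization in $m$, the Beta integral, and the final constant all agree.

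There is, however, one genuine gap: the converse of the left-hand equality case does not close as written. Your intermediate bound $S(\phi)\ge\delta_0\int_I(\phi(x)-\int\phi\,d\mu)^2\mu(dx)$ is too weak: for $\mu$ uniform on the three points $\{0,\frac12,1\}$ and $\phi$ vanishing at $\frac12$ with $\mu$-mean zero, its right-hand side is $0$ while $\var_\mu(\phi)=1$, so this inequality alone cannot rule out $\inf S(\phi)/\var_\mu(\phi)=0$, and the unspecified ``short compactness argument'' is exactly the missing content. The repair is to keep the full inner integral rather than applying Jensen in $y$: if $I$ is a small neighbourhood of an interior support point, then $|x-y|\le d(\mu)-\delta$ for all $x\in I$, $y\in\mathrm{supp}(\mu)$, hence $\frac1{(x-y)^2}-\frac1{d^2(\mu)}\ge\delta_0>0$ there, and since $\int(\phi(x)-\phi(y))^2\mu(dy)\ge\var_\mu(\phi)$ for every fixed $x$,
\[
S(\phi)\ \ge\ \delta_0\int_I\!\!\int\big(\phi(x)-\phi(y)\big)^2\mu(dy)\mu(dx)\ \ge\ \delta_0\,\mu(I)\,\var_\mu(\phi),
\]
which gives $\mathrm{Poin}(\mu)\ge\frac{2}{d^2(\mu)}+\delta_0\mu(I)>\frac{2}{d^2(\mu)}$ with no compactness needed. (The paper proceeds differently: it normalizes a near-optimal $\phi_\epsilon$, splits the Dirichlet form over $\{|x-y|\ge1-\epsilon\}$ and its complement, and deduces quantitatively that $\mu$ carries mass $1-O(\epsilon)$ near the two endpoints, forcing $\mu((0,1))=0$.)

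Your converse for the right-hand equality case, by contrast, is a genuinely different and correct route. The paper proves by induction that $\int U_n(x/2)\,d\mu=0$ for all $n\ge1$, applying the Poincar\'e inequality to $U_n+rU_1$ and invoking \eqref{4}; you instead note that $\phi_0(x)=x$ attains the infimum of the Rayleigh quotient, derive the first-variation identity $B(\phi_0,\psi)=\mathrm{Poin}(\mu)\,\mathrm{Cov}_\mu(\phi_0,\psi)$, and test with resolvents $\psi(x)=(z-x)^{-1}$, for which $B(\phi_0,\psi)=G_\mu(z)^2$ and $\mathrm{Cov}_\mu(x,(z-x)^{-1})=(z-\bar x)G_\mu(z)-1$; the resulting quadratic for $G_\mu$ and the branch $\sim1/z$ identify $\mu$ as the semicircular law with the given mean and variance. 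The variation step is legitimate since $\phi_0+t\psi$, cut off near the support, stays in $C_0^1(\R)$. This is arguably more conceptual than the paper's induction and worth keeping.
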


\begin{proof}[\bf Proof]
For a given function $\phi\in C_{0}^1(\R)$,  the left hand side of  \eqref{2} follows from
\begin{align}\label{3}
 \var_\mu(\phi) = \frac{1}{2}\iint (\phi(x)-\phi(y))^{2}\mu(dx)\mu(dy) 
& \notag =  \frac{1}{2}\iint (x-y)^{2}\left(\frac{\phi(x)-\phi(y)}{x-y}\right)^{2}\mu(dx)\mu(dy)   \\ 
& \le \frac{d^{2}(\mu)}{2} \iint \left(\frac{\phi(x)-\phi(y)}{x-y}\right)^{2}\mu(dx)\mu(dy).
\end{align}
The right hand side of \eqref{2} follows  from \eqref{1}  for a  $\phi\in C_{0}^{1}(\R)$ such that  $\phi(x)=x$ on the support of $\mu$.  

For measures  $\mu=\alpha\delta_{a}+(1-\alpha)\delta_{b}$, condition \eqref{1} is equivalent to 
\[
C\alpha(1-\alpha) \big (\phi(b)-\phi(a)\big )^{2}
\le \alpha^{2}(\phi'(a))^{2}+(1-\alpha)^{2}(\phi'(b))^{2}+2\alpha(1-\alpha)\left( \frac{\phi(b)-\phi(a)}{b-a} \right)^{2} \quad \text{for any} \: \: \phi\in C_{0}^{1}(\R).
\]
Since for any function $\phi\in C_{0}^{\infty}(\R)$ we can find another function $\psi\in C_{0}^{1}(\R)$ so that $\phi(a)=\psi(a)$ and $\phi(b)=\psi(b)$ and $\psi(a)=0$, $\psi(b)=0$, this  is also equivalent to 
\[
C\alpha(1-\alpha)\big (\psi(b)-\psi(a)\big )^{2}
\le 2\alpha(1-\alpha)\left( \frac{\psi(b)-\psi(a)}{b-a} \right)^{2} \quad \text{for any}\: \:  \psi\in C_{0}^{1}(\R). 
\]
This amounts to $C\le 2/(b-a)^{2}$ and therefore, in this case, $\mathrm{Poin}(\mu)=\frac{2}{d^{2}(\mu)}$.
 
 Conversely,  if $\mu$ is a measure so that $\mathrm{Poin}(\mu)=\frac{2}{d^{2}(\mu)}$, then, for  $1>\epsilon>0$, there is a function $\phi_{\epsilon}\in C_{0}^{1}(\R)$ such that 
 \[
\left(\frac{2}{d^{2}(\mu)}+\epsilon^{2}\right) \var _\mu (\phi_\epsilon) >   \iint \left( \frac{\phi_{\epsilon}(x)-\phi_{\epsilon}(y)}{x-y}\right)^{2}\mu(dx)\mu(dy). 
 \]
Without loss of generality we can assume that $0=\inf \mathrm{supp}(\mu)$, $1=\sup \mathrm{supp}(\mu)$ and  $\int\phi_{\epsilon}d\mu=0$, $\int \phi_{\epsilon}^{2}d\mu=1$ where we recall that $\mathrm{supp}(\mu)$ stands for the support of $\mu$.  In this case, the above inequality implies
\begin{align*}
2+\epsilon^{2} &\ge  \iint_{|x-y|\ge1-\epsilon} \left( \frac{\phi_{\epsilon}(x)-\phi_{\epsilon}(y)}{x-y}\right)^{2}\mu(dx)\mu(dy) +  \iint_{|x-y|<1-\epsilon} \left( \frac{\phi_{\epsilon}(x)-\phi_{\epsilon}(y)}{x-y}\right)^{2}\mu(dx)\mu(dy)  \\
& \ge \iint_{|x-y|\ge1-\epsilon} \big( \phi_{\epsilon}(x)-\phi_{\epsilon}(y)\big)^{2}\mu(dx)\mu(dy) +  \frac{1}{(1-\epsilon)^{2}} \iint_{|x-y|<1-\epsilon} \big( \phi_{\epsilon}(x)-\phi_{\epsilon}(y)\big)^{2}\mu(dx)\mu(dy) \\ 
& = -\frac{\epsilon(2-\epsilon)}{(1-\epsilon)^{2}}\iint_{|x-y|\ge1-\epsilon} \big( \phi_{\epsilon}(x)-\phi_{\epsilon}(y)\big)^{2}\mu(dx)\mu(dy) +\frac{2}{(1-\epsilon)^{2}} \, ,
\end{align*}
which results with 
\begin{equation}\label{e:500}
\iint_{|x-y|\ge1-\epsilon} \big( \phi_{\epsilon}(x)-\phi_{\epsilon}(y)\big)^{2}\mu(dx)\mu(dy) 
\ge 2-\frac{\epsilon(1-\epsilon)^{2}}{2-\epsilon} \, .
\end{equation}
Now, 
\begin{equation}\label{e:501}
\begin{split}
 \iint_{|x-y|\ge1-\epsilon} \big( \phi_{\epsilon}(x)-\phi_{\epsilon}(y)\big)^{2}\mu(dx)\mu(dy) 
 & \le  \iint_{\substack{|x-1/2|\ge 1/2-\epsilon \\ |y-1/2|\ge 1/2-\epsilon }} \big( \phi_{\epsilon}(x)-\phi_{\epsilon}(y)\big)^{2}\mu(dx)\mu(dy)\\ 
 & \le 2\mu \big (|x-1/2|\ge 1/2-\epsilon \big ).
 \end{split}
\end{equation}
Thus \eqref{e:500} and \eqref{e:501} give 
\[
\mu \big (|x-1/2|\ge 1/2-\epsilon \big )\ge 1-\frac{\epsilon(1-\epsilon)^{2}}{4-2\epsilon} 
\quad \text{for any}\: \: 1>\epsilon>0.
\]
This shows that $\mu((0,1))=0$ and therefore $\mu=\alpha \delta_{0}+(1-\alpha)\delta_{1}$. 

The other extreme case of inequality \eqref{2} is contained in Biane's paper \cite{Biane2} in the more general context of several noncommutative variables.  For completeness we will provide here a selfcontained proof.   In the first place, using Proposition~\ref{1}, we may assume that 
\[
\mu(dx)=\frac{1}{2\pi }\mathbbm{1}_{[-2,2]}(x)\sqrt{4-x^{2}} \, dx
\]
is the semicircular law on $[-2,2]$.
Take $U_{n}$ to be the  Chebyshev polynomials of second kind defined by $U_{n}(\cos(\theta))=\frac{\sin(n+1)\theta}{\sin\theta}$.  With this choice, we have that $U_{n}(\frac{x}{2})$ are the orthogonal polynomials with respect to $\mu$.   The generating function of $U_{n}$ is given by
\[
\sum_{n=0}^{\infty}r^{n}U_{n}(x)=\frac{1}{1-2rx+r^{2}}\quad \text{for}\quad |x|,|r|<1
\]
from which one gets
\[
\sum_{n=0}^{\infty}r^{n}\frac{U_{n}(x)-U_{n}(y)}{x-y}=\frac{2r}{(1-2rx+r^{2})(1-2ry+r^{2})}=2\sum_{n=0}^{\infty}r^{n}\sum_{k=0}^{n-1}U_{k}(x)U_{n-1-k}(y),
\]
and then
\begin{equation}\label{4}
\frac{U_{n}(x)-U_{n}(y)}{x-y}=2\sum_{k=0}^{n-1}U_{k}(x)U_{n-1-k}(y).
\end{equation}
Now, for a given $\phi\in C_{0}^{1}(\R)$, we can write in $L^{2}(\mu)$ sense,
\[
\phi(x)=\sum_{n=0}^{\infty}\alpha_{n}U_{n}\left(\frac{x}{2}\right),
\]
yielding from orthogonality and \eqref{4} that
\[
\var _\mu( \phi) =
\int \phi^{2}d\mu -\left( \int \phi d\mu\right)^{2}=\sum_{n=1}^{\infty}\alpha_{n}^{2} \quad \text{and}\quad \iint \left( \frac{\phi(x)-\phi(y)}{x-y}\right)^{2}\mu(dx)\mu(dy) = \sum_{n=1}^{\infty} n \alpha_{n}^{2}.
\]
It follows that in this case $\mathrm{Poin} (\mu)=1=1/\var(\mu)$ and equality is attained only for $\phi(x)=c_{1}+c_{2}U_{1}(x)=c_{1}+c_{2}x$ for some constants $c_{1},c_{2}$.  

To prove the converse, take a compactly supported measure $\mu$ and assume that $\int x \mu(dx)=0$ and $\int x^{2}\mu(dx)=1$.   In order to show that $\mu$ is the semicircular distribution,
it suffices to show that $\int U_{n}\left( \frac{x}{2} \right)\mu(dx)=0$ for all $n\ge1$. We use
induction to this task. Assuming true for $U_{1},U_{2},\dots, U_{n}$, and using $U_{n+1}(x)=2xU_{n}(x)-U_{n-1}(x)$, we need to show that $xU_{n}\left(\frac{x}{2} \right)$ integrates to $0$ against $\mu$.   Applying Poincar\'e's inequality to $U_{n}\left( \frac{x}{2}\right)+rU_{1}\left( \frac{x}{2}\right)$ together with the induction hypothesis and equation \eqref{4}, we get that for any $r\in\R$, 
\[
\int U_{n}^{2}\left( \frac{x}{2}\right)\mu(dx)  +r\int xU_{n}\left( \frac{x}{2}\right)\mu(dx)   \le  \iint \left(\frac{U_{n}\left( \frac{x}{2}\right)-U_{n}\left( \frac{y}{2}\right)}{x-y} \right)^{2}\mu(dx)\mu(dy),
\]
which implies that $\int xU_{n}\left( \frac{x}{2}\right)\mu(dx)=0$.  

In the case of the equilibrium measure of a convex potential $V$, we have the support of the measure consists of one interval $[a,b]$ and $a$, $b$ solve the system (cf. equation \eqref{e:0})
\[
\frac{1}{2\pi} \int_{a}^{b}V'(x)\sqrt{\frac{x-a}{b-x}} \, dx=1 \quad \text{and}\quad \frac{1}{2\pi} \int_{a}^{b}V'(x)\sqrt{\frac{b-x}{x-a}} \, dx=-1.  
\]
If we denote $c=(b-a)/2$ and $\beta=(a+b)/2$, the system above can be rewritten in terms of $\beta$ and $c$ as 
\[
\frac{c}{2\pi} \int_{-1}^{1}V'(\beta+ct) \, \frac{1+t}{\sqrt{1-t^{2}}} \, dt=1 \quad
 \text{and}\quad \frac{c}{2\pi} \int_{-1}^{1}V'(\beta+ct) \, \frac{1-t}{\sqrt{1-t^{2}}} \, dt=-1
\]
which is equivalent to 
\[
\frac{c}{2\pi} \int_{-1}^{1}V'(\beta+ct) \, \frac{t}{\sqrt{1-t^{2}}} \, dt=1 \quad \text{and}\quad  \int_{-1}^{1}V'(\beta+ct) \, \frac{1}{\sqrt{1-t^{2}}} \, dt=0.
\]
Since $V$ is $C^{2}$ the first equation can be integrated by parts to get that 
\[
\frac{c^{2}}{2\pi} \int_{-1}^{1}V''(\beta+ct)\sqrt{1-t^{2}} \, dt=1.  
\]
On the other hand we know that $V''(x)\ge 2p(2p-1)\rho x^{2p-2}$, hence 
\begin{align*}
1&\ge \frac{2p(2p-1)\rho c^{2}}{2\pi}\int_{-1}^{1}(ct+\beta)^{2p-2}\sqrt{1-t^{2}} \,dt \\
 &\ge\frac{2p(2p-1)\rho c^{2p}}{2\pi}\int_{-1}^{1}t^{2p-2}\sqrt{1-t^{2}} \, dt \\ 
&= \frac{p(2p-1)\rho c^{2p}{2p \choose p}}{4^{p}(2p-1)}\\ 
&=\frac{p \rho {2p\choose p} c^{2p}}{4^{p}} \, .
\end{align*}
This yields 
\[
c\le 2\left( m \rho {2p\choose p}\right)^{-\frac{1}{2p}}.
\]
Finally, because $d(\mu)=b-a=2c$, we arrive at \eqref{5}.\qedhere
\end{proof}

To conclude this section, we present an inequality which relates the equilibrium measure of a strong convex potential and the arcsine  law.  

\begin{theorem}
Assume that $V(x)-\rho x^{2}$ is a convex for some $\rho>0$ and the equilibrium measure $\mu_{V}$ has support $[a,b]$.  Let $\mathrm {arcsine}_{a,b}=\mathbbm{1}_{[a,b]}(x)\frac{1}{\pi\sqrt{(b-x)(x-a)}}dx$ be the arcsine law with support $[a,b]$. Then for any smooth function supported on $[a,b]$, 
\begin{equation}
\int \phi'(x)^{2}\mu_{V}(dx)\ge \rho \, \var_{\mathrm {arcsine}_{a,b}}(\phi),
\end{equation}
where the variance is considered with respect to the $\mathrm {arcsine}_{a,b}$ law.  
\end{theorem}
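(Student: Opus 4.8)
The plan is to run the same argument as in the proof of Theorem~\ref{t:6}, reusing its key estimate, and then to identify both sides in terms of the cosine expansion of $\phi(\cos t)$.

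First I would reduce to the case $[a,b]=[-1,1]$ by the affine rescaling used at the start of the proof of Theorem~\ref{t:6} (part (1) of the first Theorem controls $\mu_V$ under $x\mapsto\alpha x+\beta$). The point is that the map sending $[-1,1]$ to $[a,b]$ pushes $\mathrm{arcsine}_{-1,1}(dx)=\frac{dx}{\pi\sqrt{1-x^2}}$ to $\mathrm{arcsine}_{a,b}$ and scales $\int\phi'^2\,\mu_V$ and $\var_{\mathrm{arcsine}}(\phi)$ by the same power of the dilation factor, so the asserted inequality is invariant under this reduction. Then, since $V(x)-\rho x^2$ is convex, inequality \eqref{e:25} from the proof of Theorem~\ref{t:6} gives $g(x)\ge\frac{\rho}{\pi}\sqrt{1-x^2}$ on $[-1,1]$, whence $\int\phi'(x)^2\mu_V(dx)\ge\frac{\rho}{\pi}\int_{-1}^1\phi'(x)^2\sqrt{1-x^2}\,dx$.

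Next I would substitute $x=\cos t$ and set $\psi(t)=\phi(\cos t)$, which is smooth, even and $2\pi$-periodic, so $\psi(t)=\sum_{k\ge0}a_k\cos kt$ with term-by-term differentiation legitimate (exactly as in Theorem~\ref{t:6}). This yields $\int_{-1}^1\phi'(x)^2\sqrt{1-x^2}\,dx=\int_0^\pi\psi'(t)^2\,dt=\frac{\pi}{2}\sum_{k\ge1}k^2a_k^2$. For the right-hand side, I would use that $\mathrm{arcsine}_{-1,1}$ is the push-forward of the uniform probability measure on $[0,\pi]$ under $\cos$, so $\int\phi\,d\mathrm{arcsine}=\frac{1}{\pi}\int_0^\pi\psi\,dt=a_0$ and $\int\phi^2\,d\mathrm{arcsine}=\frac{1}{\pi}\int_0^\pi\psi^2\,dt=a_0^2+\frac12\sum_{k\ge1}a_k^2$, giving $\var_{\mathrm{arcsine}}(\phi)=\frac12\sum_{k\ge1}a_k^2$.

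Combining the two computations, the desired inequality becomes $\frac{\rho}{\pi}\cdot\frac{\pi}{2}\sum_{k\ge1}k^2a_k^2\ge\rho\cdot\frac12\sum_{k\ge1}a_k^2$, i.e. $\sum_{k\ge1}k^2a_k^2\ge\sum_{k\ge1}a_k^2$, which is immediate since $k\ge1$. There is no genuine obstacle here: the whole argument is bookkeeping already carried out in Theorem~\ref{t:6} together with the elementary Fourier-cosine evaluation of the arcsine variance; the only steps needing a word of care are the rescaling reduction and the justification of the cosine expansion and its termwise differentiation, both handled as in the proof of Theorem~\ref{t:6}.
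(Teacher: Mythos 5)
Your proposal is correct and follows essentially the same route as the paper: the rescaling to $[-1,1]$, the pointwise density bound $g(x)\ge\frac{\rho}{\pi}\sqrt{1-x^{2}}$ from convexity, and the reduction to $\sum_{k\ge1}k^{2}a_{k}^{2}\ge\sum_{k\ge1}a_{k}^{2}$ via the Chebyshev/cosine expansion (the paper phrases this with $\phi=\sum\alpha_{n}T_{n}$ and $T_{n}'=nU_{n-1}$, which is the same computation as your substitution $x=\cos t$). No substantive differences.
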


\begin{proof}  It suffices to deal with the case $a=-1$, $b=1$, the rest following by simple rescaling.  
Recall that  in the proof of Theorem \ref{t:6}, we use convexity to get that the density $g(x)$ of $\mu_{V}$ satisfies
$ g(x)\ge \frac{\rho}{\pi}\sqrt{1-x^{2}}$. Thus the proof reduces to 
\begin{equation}\label{e:122}
\frac{1}{\pi}\int_{-1}^{1} \phi'(x)^{2}\sqrt{1-x^{2}}(dx)\ge  \var_{\mathrm {arcsine}} (\phi ) .
\end{equation}
For this, write $\phi=\sum_{n=0}^{\infty}\alpha_{n}T_{n}(x)$  the expansion of $\phi$ in terms of  Chebyshev polynomials of the first kind.  Now, $T_{n}'=nU_{n-1}$ and thus the above inequality reduces to the obvious inequality $
\sum_{n=1}^{\infty}n^{2}\alpha_{n}^{2}\ge \sum_{n=1}^{\infty}\alpha_{n}^{2}$.
\qedhere
\end{proof}

We will actually see below that inequality \eqref{e:122} is simply the spectral gap for the Jacobi operator associated to the arcsine law.

\section{Poincar\'e Inequalities and Jacobi Operators}\label{s:spectral}

In this section we show how the two versions of the Poincar\' e inequalities can be viewed as spectral gaps for some Jacobi operators. This discussion is mainly driven from the work \cite{CD1} by Cabanal-Duvillard and his interpretation of the variance in \eqref{e:200} in terms of the number operator of the Jacobi operator associated to the arcsine law. This viewpoint allows for an unified perspective of the Poincar\'e inequalities presented in the preceding sections.

For our purpose we consider here the Jacobi operators given,  for smooth functions on $(-1,1)$, 
by
\begin{equation}\label{e:jac}
L_{\lambda}f(x)=-(1-x^{2})f''(x)+(2\lambda+1) x f'(x)
\end{equation}
for $\lambda\ge0$.  We consider the Gegenbauer polynomials $C^{\lambda}_{n}$, $\lambda>0$,  defined by the generating function
\[
\sum_{n=0}^{\infty}r^{n}C_{n}^{\lambda}(x)=\frac{1}{(1-rx+r^{2})^{\lambda}}.
\]
For $\lambda=0$ we set $C_{n}^{\lambda}(x)=T_{n}(x)/n$, $n\ge1$, where $T_{n}$ are the Chebyshev polynomials of the first kind.   

It is known that $C_{n}^{\lambda}$ are eigenfunctions of $L_{\lambda}$, with eigenvalue $n(n+2\lambda)$, i.e. 
\[
L_{\lambda}C^{\lambda}_{n}=n(n+2\lambda)C^{\lambda}_{n}
\]
On the other hand the Gegenbauer polynomials are orthogonal with respect to the probability measure  
\[
\nu_{\lambda}=\frac{2^{2\lambda} \Gamma^{2}(\lambda+1)}{\pi\Gamma(2\lambda+1)}
    \, \mathbbm{1}_{[-1,1]}(x)(1-x^{2})^{\lambda-1/2}.
\] 
Notice that in the case of $\lambda=0$, this becomes the arcsine law and for $\lambda=1$, this is the semicircular law, while for $\lambda=1/2$, this becomes the uniform measure on $[-1,1]$.  

Take now the normalized Gegenbauer polynomials $\phi^{\lambda}_{n}=G^{\lambda}_{n}/\sqrt{c^{\lambda}_{n}}$, where $c_{n}^{\lambda}=\int G_{n}^{\lambda}(x)^{2}\nu_{\lambda}(dx)$.  Then $\phi_{n}^{\lambda}$ form an orthonormal basis of $L^{2}(\nu_{\lambda})$ and thus the operator $L_{\lambda}$ is diagonalized in this basis.  
Consider $N_{\lambda}$ to be the counting number operator with respect to the basis $\phi^{\lambda}_{n}$, i.e. 
\begin{equation}\label{e:count}
N_{\lambda}\phi^{\lambda}_{n}=n\phi_{n}^{\lambda}.
\end{equation}  
This implies that 
$L_{\lambda}=N_{\lambda}^{2}+2\lambda N_{\lambda}$.
Therefore we have the following two inequalities
\begin{equation}\label{e:29}
L_{\lambda}\ge (2\lambda+1)N_{\lambda}\quad\text{and}\quad N_{\lambda}\ge 1-P_{\lambda}
\end{equation}
where $P_{\lambda}$ here stands for the projection on constant functions in $L^{2}(\nu_{\lambda})$.  In other words, $P_{\lambda}\phi=\int \phi\nu_{\lambda}$.  

Notice that equation \eqref{e:29} include two statements.  The first one is the comparison of $L$ and $N$, with the spectral gap $2\lambda+1$ while  the second one is the spectral gap of the counting number operator with the spectral gap $1$.  In the sequel we want to translate these spectral gaps in terms of Poincar\'e type inequality.  For this matter we need to find the kernel of the operator $N$.   

Then we have for any function in the domain of definition of  $L_{\lambda}$, that $\phi=\sum_{n=0}^{\infty}\alpha_{n}\phi_{n}^{\lambda}$, and then 
\[
\langle L\phi ,\phi \rangle_{L^{2}(\nu_{\lambda})}=\sum_{n=0}^{\infty}n(n+2\lambda)\alpha_{n}^{2}.
\]
On the other hand, using integration by parts, we can justify that
 \[
\langle L\phi ,\phi \rangle_{L^{2}(\nu_{\lambda})}= \int \phi L_{\lambda}\phi d\nu_{\lambda}=\int\phi'(x)^{2}(1-x^{2})\nu_{\lambda}(dx). 
 \]
For the number operator, we have that 
\[
\int \phi N_{\lambda}\phi d\nu_{\lambda}=\sum_{n=0}^{\infty}n\alpha_{n}^{2}=\lim_{r\uparrow 1}\sum_{n=0}^{\infty}nr^{n-1}\alpha_{n}^{2}.
\]
Now, for $-1<r<1$, 
\[
\sum_{n=0}^{\infty}nr^{n-1}\alpha_{n}^{2}=\iint\phi(x)\phi(y)\sum_{n=0}^{\infty}n r^{n-1}\phi_{n}^{\lambda}(x)\phi_{n}^{\lambda}(y)\nu_{\lambda}(dx)\nu_{\lambda}(dy).
\]
Furthermore, since $\int \phi_{n}^{\lambda}d\nu_{\lambda}=0$ for $n\ge1$, we also obtain that $\iint\phi^{2}(x)\phi_{n}^{\lambda}(y)\nu_{\lambda}(dx)\nu_{\lambda}(dy)=0$ for $n\ge 0$ and thus, denoting $K_{\lambda}(r,x,y)=-\sum_{n=0}^{\infty}nr^{n-1}\phi_{n}^{\lambda}(x)\phi_{n}^{\lambda}(y)$,
\[
\iint\phi(x)\phi(y)\sum_{n=0}^{\infty}n r^{n-1}\phi_{n}^{\lambda}(x)\phi_{n}^{\lambda}(y)\nu_{\lambda}(dx)\nu_{\lambda}(dy)=\frac{1}{2}\iint (\phi(x)-\phi(y))^{2}K_{\lambda}(r,x,y)\nu_{\lambda}(dx)\nu_{\lambda}(dy).  
\]
The following formula is essentially due to Watson \cite{Watson} and valid for $\lambda>0$, 
\[
\sum_{n=0}^{\infty}r^{n}\phi_{n}^{\lambda}(x)\phi_{n}^{\lambda}(y)=\frac{(1-r^{2})\Gamma(2\lambda)}{2^{2\lambda-1}\Gamma^{2}(\lambda)}\int_{-1}^{1}\frac{(1-z^{2})^{\lambda-1}}{(1-2r(xy+z\sqrt{(1-x^{2})(1-y^{2})})+r^{2})^{1+\lambda}} \, dz. 
\]
For $\lambda=0$, we have to deal with the Chebyshev polynomials of the first kind  which was more or less what appeared in the proof of Theorem \ref{t:6}.  For this case, we have that  (denoting $x=\cos t$ and $y=\cos s$),
\[
\sum_{n=0}^{\infty} \frac{r^{n}}{c_n} \, T_{n}(x)T_{n}(y)=\frac{1-r\cos(t+s)}{1-2r\cos(t+s)+r^{2}}
+\frac{1-r\cos(t-s)}{1-2r\cos(t-s)+r^{2}}
\]
where $c_{n}=\int T_{n}^{2}d\nu_{0}=1$ for $n=0$ and $1/2$ otherwise.    

Thus, we obtain, after differentiation with respect to $r$ and then limit over $r\uparrow 1$, that
\begin{equation}\label{e:40}
  K_{\lambda}(x,y)=\lim_{r\uparrow 1}K_{\lambda}(r,x,y)=\begin{cases}
\displaystyle \frac{\Gamma(2\lambda)}{2^{3\lambda-1}\Gamma^{2}(\lambda)}\int_{-1}^{1}\frac{(1-z^{2})^{\lambda-1}}{\left(1-xy-z\sqrt{(1-x^{2})(1-y^{2})}\right)^{1+\lambda}} \, dz, & \lambda>0\\
\displaystyle\frac{1-xy}{(x-y)^{2}},  & \lambda=0\\
\displaystyle \frac{1}{2(x-y)^{2}}, & \lambda=1.  
\end{cases}
\end{equation}
The integrand is not a rational function. In some cases, it is algebraic 
since $\lambda\ge0$ need not be an integer.

To reveal  the singularity of this kernel, we make the change of variable 
\[
1-xy-z\sqrt{(1-x^{2})(1-y^{2})}=t \Big (1-xy-\sqrt{(1-x^{2})(1-y^{2})} \Big ).
\] 
Then, after simple algebraic manipulations, setting $f_{\lambda}:(0,1)\to \R$,
\[
f_{\lambda}(u)=\int_{1}^{1/u}\frac{\left[(t-1)(1-ut)\right]^{\lambda-1}}{t^{\lambda+1}} \, dt ,
\]
and 
\begin{equation}\label{e:41}
H_{\lambda}(x,y)=
\begin{cases}
\displaystyle \frac{\Gamma(2\lambda)\left( 1-xy+\sqrt{(1-x^{2})(1-y^{2})} \right)^{\lambda}}{2^{3\lambda-1}\Gamma^{2}(\lambda) \left((1-x^{2})(1-y^{2})\right)^{\lambda-1/2}}f_{\lambda}\left(\frac{(x-y)^{2}}{\left(1-xy+\sqrt{(1-x^{2})(1-y^{2})}\right)^{2}}\right), & \lambda>0 \\
\displaystyle 1-xy, & \lambda=0,\\
\displaystyle\frac{1}{2}\, ,  &\lambda=1, 
\end{cases}
\end{equation}
we can rewrite equation \eqref{e:40} for $|x|,|y|<1$ as 
\begin{equation}
K_{\lambda}(x,y)=\frac{H_{\lambda}(x,y)}{(x-y)^{2}}
\end{equation}
where $H_{\lambda}(x,y)$ is a continuous function of $x,y\in[-1,1]$.  

Now, from  \eqref{e:29}, we obtain the following result.  

\begin{theorem}
For any $\lambda\ge0$, one has for all $\lambda\ge0$ and any $\phi\in C^{1}([-1,1])$, that 
\begin{equation}\label{e:42}
\int\phi'(x)^{2}(1-x^{2})\nu_{\lambda}(dx)\ge \frac{2\lambda+1}{2} \iint \left(\frac{\phi(x)-\phi(y)}{x-y} \right)^{2}H_{\lambda}(x,y)\nu_{\lambda}(dx)\nu_{\lambda}(dy).
\end{equation}
and
\begin{equation}\label{e:43}
\iint \left(\frac{\phi(x)-\phi(y)}{x-y} \right)^{2}H_{\lambda}(x,y)\nu_{\lambda}(dx)\nu_{\lambda}(dy)\ge   
     2 \, \mathrm{Var}_{\nu_{\lambda}}(\phi).
\end{equation}
\end{theorem}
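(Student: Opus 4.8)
The plan is to obtain both \eqref{e:42} and \eqref{e:43} directly from the two operator inequalities in \eqref{e:29} by substituting the quadratic-form identities already assembled in this section. For $\phi\in C^{1}([-1,1])$, write $\phi=\sum_{n\ge0}\alpha_{n}\phi_{n}^{\lambda}$ in $L^{2}(\nu_{\lambda})$; then the three quantities entering the statement have clean spectral expressions. By the integration by parts recorded above, $\int\phi'(x)^{2}(1-x^{2})\nu_{\lambda}(dx)=\langle L_{\lambda}\phi,\phi\rangle_{L^{2}(\nu_{\lambda})}=\sum_{n}n(n+2\lambda)\alpha_{n}^{2}$ (the boundary terms vanishing since $(1-x^{2})^{\lambda+1/2}\to0$ at $\pm1$ for $\lambda\ge0$). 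Since $\int\phi_{n}^{\lambda}d\nu_{\lambda}=0$ for $n\ge1$, we have $\mathrm{Var}_{\nu_{\lambda}}(\phi)=\langle(1-P_{\lambda})\phi,\phi\rangle=\sum_{n\ge1}\alpha_{n}^{2}$. Finally $\langle N_{\lambda}\phi,\phi\rangle=\sum_{n}n\alpha_{n}^{2}$, which through the kernel computation carried out above equals $\frac12\iint\bigl(\tfrac{\phi(x)-\phi(y)}{x-y}\bigr)^{2}H_{\lambda}(x,y)\,\nu_{\lambda}(dx)\nu_{\lambda}(dy)$.

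With these identities in hand the argument is purely algebraic. As $L_{\lambda}=N_{\lambda}^{2}+2\lambda N_{\lambda}$ is diagonal in the orthonormal basis $(\phi_{n}^{\lambda})$, we have $L_{\lambda}-(2\lambda+1)N_{\lambda}=N_{\lambda}^{2}-N_{\lambda}=N_{\lambda}(N_{\lambda}-1)\ge0$, its eigenvalue on $\phi_{n}^{\lambda}$ being $n(n-1)\ge0$ for every integer $n\ge0$; likewise $N_{\lambda}-(1-P_{\lambda})\ge0$, with eigenvalue $n-1\ge0$ on $\phi_{n}^{\lambda}$ for $n\ge1$ and $0$ for $n=0$. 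Evaluating the first operator inequality on $\phi$ and inserting the expressions above yields $\int\phi'(x)^{2}(1-x^{2})\nu_{\lambda}(dx)\ge\frac{2\lambda+1}{2}\iint\bigl(\tfrac{\phi(x)-\phi(y)}{x-y}\bigr)^{2}H_{\lambda}(x,y)\,\nu_{\lambda}(dx)\nu_{\lambda}(dy)$, which is exactly \eqref{e:42}; evaluating the second yields $\frac12\iint\bigl(\tfrac{\phi(x)-\phi(y)}{x-y}\bigr)^{2}H_{\lambda}(x,y)\,\nu_{\lambda}(dx)\nu_{\lambda}(dy)\ge\mathrm{Var}_{\nu_{\lambda}}(\phi)$, which is \eqref{e:43} after multiplying by $2$.

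The step I expect to be the main obstacle is the limiting passage $r\uparrow1$ that legitimizes the kernel formula for $\langle N_{\lambda}\phi,\phi\rangle$, namely the identity $\sum_{n}n\alpha_{n}^{2}=\lim_{r\uparrow1}\tfrac12\iint(\phi(x)-\phi(y))^{2}K_{\lambda}(r,x,y)\,\nu_{\lambda}(dx)\nu_{\lambda}(dy)=\tfrac12\iint(\phi(x)-\phi(y))^{2}K_{\lambda}(x,y)\,\nu_{\lambda}(dx)\nu_{\lambda}(dy)$ with $K_{\lambda}(x,y)=H_{\lambda}(x,y)/(x-y)^{2}$. On the series side this is monotone convergence: the terms $nr^{n-1}\alpha_{n}^{2}$ are nonnegative and increase to $n\alpha_{n}^{2}$ as $r\uparrow1$, and $\sum_{n}n(n+2\lambda)\alpha_{n}^{2}=\int\phi'^{2}(1-x^{2})d\nu_{\lambda}<\infty$ for $\phi\in C^{1}$ forces $\sum_{n}n\alpha_{n}^{2}<\infty$. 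On the integral side I would use that $K_{\lambda}(r,x,y)\to K_{\lambda}(x,y)$ pointwise on $\{|x|,|y|<1\}$ from the explicit representations (the Watson–Gegenbauer formula for $\lambda>0$, the Chebyshev expansion for $\lambda=0$), that $H_{\lambda}$ is continuous on $[-1,1]^{2}$ so the limiting integrand $\bigl(\tfrac{\phi(x)-\phi(y)}{x-y}\bigr)^{2}H_{\lambda}(x,y)$ is bounded for $\phi\in C^{1}$, and that $(\phi(x)-\phi(y))^{2}K_{\lambda}(r,x,y)\ge0$; Fatou's lemma then gives one inequality between the double integral and $\sum_{n}n\alpha_{n}^{2}$ while the series identity already established gives the reverse, pinning the limit to the claimed expression. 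The remaining points — convergence of the $\phi_{n}^{\lambda}$-expansion of a $C^{1}$ function and its term-by-term differentiation, exactly as done for the semicircular law with the polynomials $U_{n}$ — are routine and I would treat them as such.
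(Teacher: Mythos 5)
Your proposal is correct and follows exactly the paper's route: the theorem is obtained from the operator identities $L_{\lambda}=N_{\lambda}^{2}+2\lambda N_{\lambda}$, the spectral expressions $\langle L_{\lambda}\phi,\phi\rangle=\int\phi'^{2}(1-x^{2})d\nu_{\lambda}$ and $\langle N_{\lambda}\phi,\phi\rangle=\tfrac12\iint(\phi(x)-\phi(y))^{2}K_{\lambda}\,d\nu_{\lambda}d\nu_{\lambda}$, and the two inequalities in \eqref{e:29}. Your additional care with the monotone/Fatou argument for the $r\uparrow1$ passage only tightens a step the paper treats informally.
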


\begin{remark}
\begin{enumerate}
\item Equation \eqref{e:42} for $\lambda=0$ is the statement of Theorem \ref{t:6} for the case $V(x)=2x^{2}$ and  for $\lambda=1$ (more precisely, equation \eqref{e:91}) while  equation \eqref{e:43} is the statement of the second Poincar\'e inequality contained in Theorem \ref{t:7} for the semicircular law.  The combination of these two inequalities is equation \eqref{e:122}.

In other words, for measures $\nu_{\lambda}$,  the first Poincar\'e type inequality is driven by the comparison of the Jacobi and counting number operators defined in \eqref{e:jac} and \eqref{e:count}, as the second Poincar\'e type is the spectral gap of the counting number operator.  
\item Combining equations \eqref{e:42} and \eqref{e:43},  we also get a Brascamp-Lieb type inequality: 
\begin{equation}\label{e:400}
\int\phi'(x)^{2}(1-x^{2})\nu_{\lambda}(dx)\ge (2\lambda+1) \, \mathrm{Var}_{\nu_{\lambda}}(\phi).
\end{equation}
For $\lambda\ge 1/2$, the measure $\nu_{\lambda}$ is of the form $e^{-V(x)}dx$, where $V(x)=-c_{\lambda}-(\lambda-1/2)\log(1-x^{2})$, a strictly convex function on $(-1,1)$ and according to the classical Brascamp-Lieb
inequality \cite{Brascamp-Lieb},
\begin{equation}\label{e:401}
\int\phi'(x)^{2}\frac{(1-x^{2})^{2}}{(1+x^{2})} \, \nu_{\lambda}(dx)\ge (2\lambda-1) \, \mathrm{Var}_{\nu_{\lambda}}(\phi).
\end{equation}
Notice here that neither \eqref{e:400} not \eqref{e:401} implies the other which means that they complement each other in some sense.  For example if $\phi$ has support in $[-\frac{1}{2\lambda},\frac{1}{2\lambda}]$, \eqref{e:400} implies \eqref{e:401}, while if $\phi$ is supported on $[-1,1]\backslash [-\frac{1}{2\lambda},\frac{1}{2\lambda}]$, \eqref{e:401} implies \eqref{e:400}.  
\end{enumerate}
\end{remark}

\section{Wishart Ensembles and Marcenko-Pastur Distributions}

In this section, we address the preceding functional inequalities for probability measures on the real positive
axis in the context of the Wishart Ensembles from random matrix theory and their associated Marcenko-Pastur
distributions.

We start with the random matrix heuristics although, as far as we know, it has not been used towards functional
inequalities as before.
The problems of large deviations principle for the distribution of the eigenvalues of Wishart ensembles is discussed in  \cite{HP2}.  The model is as follows.  Take $T(n)$ a $n\times p(n)$ random matrix with all the entries being iid $N(0,1)$ random variables.  Then $T(n)T(n)^{t}$ for $n<p(n)$ is known as the nonsingular Wishart random ensemble.  According to \cite[page 129]{HP3}, the distribution of the Wishart ensembles is given by 
\[
C_{np} \, e^{-\frac{p(n)}{2}\mathrm{Tr} M}(\det M)^{(p-n-1)/2}dM.
\] 
where the measure $dM=\prod_{i\le j}dM_{ij}$ the restriction of the Lebegue measure on the set of $n\times n$ non-negative matrices. 

It is also known (for example \cite[page 129]{HP3}) that the joint distribution of eigenvalues $(\lambda_{1},\lambda_{2},\dots,\lambda_{n})$ of $\frac{1}{p(n)}T(n)T(n)^{t}$ is given by 
\[
\frac{1}{Z_{n}} \, e^{-\frac{p(n)}{2}\sum_{i=1}^{n} t_{i}}\prod_{i=1}^{n}\lambda_{i}^{(p(n)-n-1)/2}\prod_{1\le i < j\le n}|\lambda_{i}-\lambda_{j}|.
\]
Our interest is in the limit distribution of $\mu_{n}=\frac{1}{n}\sum_{i=1}^{n}\delta_{\lambda_{i}}$.  The classical result states that if $n/p(n)\xrightarrow[n\to\infty]{} \alpha\in (0,1]$, then the limit distribution of $\mu_{n}$ is the so called Marcenko-Pastur distribution given by 
\[
\mathbbm{1}_{[(1-\sqrt{\alpha})^{2},(1+\sqrt{\alpha})^{2}]}(x) \,
\frac{\sqrt{4\alpha-(x-1-\alpha)^{2}}}{2\pi \alpha x} \, dx.
\]
This is a particular model for the standard Wishart ensembles.  However one can consider a more general example with potentials for which the distribution of the matrix is driven by a potential $Q:[0,\infty)\to\R$,
\[
C_{n} \, e^{-p(n)\mathrm{Tr}Q(M)}(\det M)^{\gamma(n)}dM
\]
where $dM$ stands for the Lebesgue measure on $n\times n$ positive definite matrices.   The distribution of eigenvalues of $M$ is given by 
\[
\frac{1}{Z_{n}}e^{-p(n)\sum_{i=1}^{n} Q(t_{i})}\prod_{i=1}^{n}t_{i}^{\gamma(n)}\prod_{1\le i < j \le n}|t_{i}-t_{j}|.
\]  
The main result of \cite{HP2} is that the distribution of the random measures $\mu_{n}=\frac{1}{p(n)}\sum_{i=1}^{p(n)}\delta_{\lambda_{i}}$ under the conditions $n/p(n)\xrightarrow[n\to\infty]{}\alpha\in(0,1]$, $\gamma(n)/n\xrightarrow[n\to\infty]{}\gamma>0$, $\nu_{n}$ satisfy a large deviation principle with scale $n^{-2}$ and the rate function given by 
\[
R(\mu)= \tilde{E}_{Q}(\mu) - \inf_{\mu\in\mathcal{P}([0,\infty))} \tilde{E}_{Q}(\mu),
\]
where
\[
\tilde{E}_{Q}(\mu)=\int \alpha \big (Q(x)-\gamma\log (x)\big)\mu(dx)-\frac{\alpha^{2}}{2}
\iint\log|x-y|\mu(dx)\mu(dy).
\]

This gives the following motivation.  Assume that $V:[0,\infty)\to\R\cup\{+\infty\}$ is a lower semi-continuous potential such that  $\lim_{|x|\to\infty}(V(x)-2\log|x|)=\infty$.  Then,  according to the results in \cite{ST}, we know that there is a unique minimizer of 
\[
\inf_{\mu\in\mathcal{P}([0,\infty))} E_{V}(\mu).
\]
In addition the equilibrium measure $\mu_{V}$ has  compact support.    

A particular case of interest is $V(x)= r x - s \log (x)$ with $r>0,s\ge 0$ for which we know  \cite[page 207]{ST} that the equilibrium measure is given by 
\begin{equation}\label{e:121}
\mu_{V}(dx)=\mathbbm{1}_{[a,b]}(x) \, \frac{r\sqrt{(x-a)(b-x)}}{2 \pi x} \, dx\quad \text{where}\quad a=\frac{s+2-2\sqrt{s+1}}{r} \, ,  \quad b=\frac{s+2+2\sqrt{s+1}}{r} \, .
\end{equation}
One recovers the Marcenko-Pastur distribution for $V(x)= r x - s \log (x)$, $r >0$, $ s \geq 0$, with
$r=1/\alpha$ and $s=(1-\alpha)/\alpha$.  

The natural way to deal with functional inequalities in the context of measures on the positive axis $[0,\infty)$
is to transfer measures from $[0,\infty)$ into measures on the whole $\R$.  
For a measure $\mu$ on $[0,\infty)$, consider thus the associated symmetric measure $\tilde{\mu}$ on $\R$ defined as 
\begin{equation}\label{e:tilde}
\mu(F)=\tilde{\mu} \big (\{x:x^{2}\in F \} \big )
\end{equation}
for any measurable set $F$ of $[0,\infty)$.   Defining $\tilde{V}(x)=V(x^{2})/2$,  it is then an easy exercise to check that 
\begin{equation}\label{e:tilde3}
E_{V}(\mu)=2E_{\tilde{V}}(\tilde{\mu}).
\end{equation}
In addition, the minimizer of $E_{\tilde{V}}$ is $\mu_{\tilde{V}}=\tilde{\mu}_{V}$ 
Further, for the non-decreasing transportation map $\theta$ of $\mu_{V}$ into $\mu$, define 
\begin{equation}\label{e:tilde2}
\tilde{\theta}(x)=\sign(x)\sqrt{\theta(x^{2})},
\end{equation}
which transports $\tilde{\mu}_{\tilde{V}}$ into $\tilde{\mu}$.  

In addition, as it was pointed out in \cite{HPU1}, the relative free Fisher information $I_{V}(\mu)$ is defined for measures $\mu$ on $[0,\infty)$ with density $p=d\mu/dx$ in $L^{3}([0,\infty),xdx)$ as
\begin{equation}\label{e:tilde4}
I_{V}(\mu)=\int_{0}^{\infty} x \big (H\mu(x)-V'(x)\big )^{2}\mu(dx)
    \quad \text{with}\quad H\mu(x)=p.v. \int \frac{2}{x-y}\mu(dy).
\end{equation}
Otherwise we take $I_{V}(\mu)=+\infty$.  The main reason for defining this in this way is because,  cf. \cite[Lemma 6.3]{HPU1}  and the discussion following, one has
\begin{equation}\label{e:703}
I_{V}(\mu)=2I_{\tilde{V}}(\tilde{\mu}),
\end{equation}
where $I_{\tilde{V}}$ is defined by \eqref{e:27}.    

To state the transportation cost result, we define the appropriate distance.   For any $\mu,\nu\in\mathcal{P}([0,\infty))$, set the distance as
\begin{equation}\label{e:W2}
W(\mu,\nu)=\inf_{\pi\in\Pi(\mu,\nu)}\left( \int \big(\sqrt{x}-\sqrt{y}\big)^{2}\pi(dx,dy)\right)^{1/2}
\end{equation}
where $\Pi(\mu,\nu)$ is the set of probability measures on $\R^{2}$ with marginals $\mu$ and $\nu$.  

In this context we have the following transportation cost inequality.

\begin{theorem}\label{t:12}
Assume that $V:(0,\infty)\to \R$ is $C^{2}((0,\infty))$ such that  $V(x^{2})-\rho x^{2}$ is convex on $(0,\infty)$ for some $\rho>0$ and let $\mu_{V}$ be the equilibrium measure of $V$ on $[0,\infty)$.  Then, for any
probability measure $\mu$ on $[0,\infty)$, we have that 
\begin{equation}\label{e:80}
\rho \, W^{2}(\mu,\mu_{V})\le E_{V}(\mu)-E_{V}(\mu_{V}),
\end{equation}
In the case of $V(x)=rx-s\log(x)$ with $r>0$ and $s\ge0$, this inequality with $\rho=r$ is sharp.  
\end{theorem}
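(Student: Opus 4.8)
The plan is to reduce \eqref{e:80} to the real-line transportation inequality of Theorem~\ref{thm:1}(1) by the symmetrization $\mu\mapsto\tilde\mu$, $V\mapsto\tilde V(x)=V(x^{2})/2$ of \eqref{e:tilde}--\eqref{e:tilde2}. We may assume $E_{V}(\mu)<\infty$, so that all the moments appearing below are finite. By \eqref{e:tilde3} together with $\mu_{\tilde V}=\tilde\mu_{V}$,
\[
E_{V}(\mu)-E_{V}(\mu_{V})=2\bigl(E_{\tilde V}(\tilde\mu)-E_{\tilde V}(\tilde\mu_{V})\bigr),
\]
and the hypothesis that $V(x^{2})-\rho x^{2}$ is convex on $(0,\infty)$ says exactly that $\tilde V(x)-\tfrac{\rho}{2}x^{2}=\tfrac12\bigl(V(x^{2})-\rho x^{2}\bigr)$ is convex on $(0,\infty)$, hence --- $\tilde V$ being even --- also on $(-\infty,0)$. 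So $\tilde V$ is (up to a logarithmic singularity at the origin, which will turn out to be harmless) admissible for Theorem~\ref{thm:1}(1) with constant $\rho/2$.

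First I would check that the proof of Theorem~\ref{thm:1}(1) transfers verbatim to $\tilde V$ with the non-decreasing transport map $\tilde\theta$ of \eqref{e:tilde2}. Identity \eqref{e:1} still holds: the variational equation \eqref{vc} for $\tilde V$ is valid on $\mathrm{supp}(\tilde\mu_{V})=\pm[\sqrt a,\sqrt b]$, which is bounded away from $0$ and $\infty$, where $\tilde V$ is $C^{2}$. The logarithmic term is $\ge 0$ by $t-1\ge\log t$. And the convexity estimate $\tilde V(\tilde\theta(x))-\tilde V(x)-\tilde V'(x)(\tilde\theta(x)-x)\ge\tfrac{\rho}{2}(\tilde\theta(x)-x)^{2}$ is only ever invoked for the pair $(x,\tilde\theta(x))$, which lies on a single half-line because $\tilde\theta$ is odd; hence only convexity of $\tilde V(x)-\tfrac{\rho}{2}x^{2}$ on each of $(0,\infty)$ and $(-\infty,0)$ is needed. (If $\tilde\theta$ vanishes on a set of positive $\tilde\mu_{V}$-measure, i.e.\ $\mu$ has an atom at $0$, then $E_{V}(\mu)=+\infty$ and there is nothing to prove.) This gives $\tfrac{\rho}{2}\,W_{2}^{2}(\tilde\mu,\tilde\mu_{V})\le E_{\tilde V}(\tilde\mu)-E_{\tilde V}(\tilde\mu_{V})$.

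Next I would identify the two transport costs. Since $\tilde\theta(x)=\sign(x)\sqrt{\theta(x^{2})}$ is the non-decreasing transport of $\tilde\mu_{V}$ onto $\tilde\mu$ it realizes $W_{2}$, and $(\tilde\theta(x)-x)^{2}=\bigl(\sqrt{\theta(x^{2})}-\sqrt{x^{2}}\bigr)^{2}$; pushing $\tilde\mu_{V}$ forward by $x\mapsto x^{2}$ onto $\mu_{V}$, and using that the monotone coupling is optimal for the cost $(\sqrt x-\sqrt y)^{2}=x+y-2\sqrt{xy}$ (as $\partial^{2}_{xy}(x+y-2\sqrt{xy})<0$), we get
\[
W_{2}^{2}(\tilde\mu,\tilde\mu_{V})=\int_{\R}\bigl(\sqrt{\theta(x^{2})}-\sqrt{x^{2}}\bigr)^{2}\tilde\mu_{V}(dx)=\int_{[0,\infty)}\bigl(\sqrt{\theta(t)}-\sqrt t\bigr)^{2}\mu_{V}(dt)=W^{2}(\mu,\mu_{V}).
\]
Combining the last three displays yields $\rho\,W^{2}(\mu,\mu_{V})\le E_{V}(\mu)-E_{V}(\mu_{V})$, which is \eqref{e:80}. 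For the sharpness when $V(x)=rx-s\log x$ (so $\rho=r$), I would test the inequality on the translated family $\mu_{c}:=(\mathrm{id}+c)_{\#}\mu_{V}$, $c>0$: the logarithmic energy is translation invariant, so $E_{V}(\mu_{c})-E_{V}(\mu_{V})=\int\bigl(V(t+c)-V(t)\bigr)\mu_{V}(dt)=rc-s\int\log(1+c/t)\,\mu_{V}(dt)$, while $\theta_{c}(t)=t+c$ is the monotone transport, so $W^{2}(\mu_{c},\mu_{V})=\int(\sqrt{t+c}-\sqrt t)^{2}\mu_{V}(dt)$. As $c\to\infty$ the numerator is $rc+O(\log c)$ and the denominator is $c-2\sqrt c\int\sqrt t\,\mu_{V}(dt)+O(1)$, both $c(1+o(1))$, so the ratio tends to $r$; hence $\rho=r$ cannot be improved.

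I expect the only real friction to be bookkeeping: checking that the logarithmic singularity of $\tilde V$ at the origin does not interfere with the proof of Theorem~\ref{thm:1}(1) (handled above via the oddness of $\tilde\theta$ and the location of $\mathrm{supp}(\tilde\mu_{V})$), making the change-of-variables identity $W_{2}^{2}(\tilde\mu,\tilde\mu_{V})=W^{2}(\mu,\mu_{V})$ and the optimality of the monotone coupling for the cost $(\sqrt x-\sqrt y)^{2}$ precise, and verifying the two asymptotics in the sharpness step using the explicit $\mu_{V}$ of \eqref{e:121}. The substance of the argument is an essentially routine transfer of Theorem~\ref{thm:1}(1) through the squaring map.
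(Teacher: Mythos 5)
Your argument is correct and is essentially the paper's own proof: the paper likewise symmetrizes via $\tilde\mu$, $\tilde V(x)=V(x^2)/2$, $\tilde\theta(x)=\sign(x)\sqrt{\theta(x^2)}$, rewrites \eqref{e:80} as $\tfrac{\rho}{2}W_2^2(\tilde\mu,\tilde\mu_V)\le E_{\tilde V}(\tilde\mu)-E_{\tilde V}(\tilde\mu_V)$, and runs the decomposition of Theorem~\ref{thm:1}(1) using convexity of $\tilde V(x)-\tfrac{\rho}{2}x^2$ on each half-line together with the fact that $x$ and $\tilde\theta(x)$ share a sign. The only cosmetic difference is the sharpness family --- you translate by $c$ in the original variable ($\theta_c(t)=t+c$) while the paper translates in the symmetrized variable ($\theta(t)=(\sqrt t+m)^2$, i.e.\ $\tilde\theta(x)=x+m\,\sign(x)$) --- and both yield the same asymptotic ratio $r$.
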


\begin{proof} As announced, the idea is to interpret this inequality as an inequality for potentials on the whole real line instead of $[0,\infty)$.  Using the measures $\tilde{\mu}$ and $\tilde{\mu}_{V}$ from equation \eqref{e:tilde}
together with \eqref{e:tilde3}, we have that 
\[
E_{V}(\mu)-E_{V}(\mu_{V})=2 \big (E_{\tilde{V}}(\tilde{\mu})-E_{\tilde{V}}(\tilde{\mu}_{V}) \big ).  
\]
On the other hand, if $\theta$ is the (increasing) transportation map of $\mu_{V}$ into $\mu$, then it is not hard to check that
\[
W^{2}(\mu,\nu)=\int \big (\sqrt{x}-\sqrt{\theta(x)} \big )^{2}\mu_{V}(dx)=\int \big (x-\tilde{\theta}(x) \big )^{2}\tilde{\mu}_{V}(dx).
\]
In this framework the inequality \eqref{e:80} translates as 
\begin{equation}\label{e:801}
\frac{\rho}{2} \, W_{2}^{2}(\tilde{\mu},\tilde{\mu}_{V})\le  E_{\tilde{V}}(\tilde{\mu})-E_{\tilde{V}}(\tilde{\mu}_{V}).
\end{equation}
From here we will use the same argument as in the proof of Theorem \ref{thm:1}.  Start with 
\begin{align*}
 E_{\tilde{V}}(\tilde{\mu})-E_{\tilde{V}}(\tilde{\mu}_{V}) 
 &=\int\left( \tilde{V}(\tilde{\theta}(x))-\tilde{V}(x)-\tilde{V}'(x)(\tilde{\theta}(x)-x)\right)\tilde{\mu}_{V}(dx)\\ 
&\quad+\iint \left(\frac{\tilde{\theta}(x)-\tilde{\theta}(y)}{x-y}-1- \log\frac{\tilde{\theta}(x)-\tilde{\theta}(y)}{x-y}\right)\tilde{\mu}_{V}(dx)\tilde{\mu}_{V}(dy). 
\end{align*}
and notice that the second line of this is non-negative.  For the first line we point out that because $\tilde{V}(x)-\frac{\rho}{2} x^{2}$  is convex and $x$ and $\tilde{\theta}(x)$ have the same sign, for any $x$,
\[
\tilde{V}(\tilde{\theta}(x))-\tilde{V}(x)-\tilde{V}'(x) \big (\tilde{\theta}(x)-x\big )
\ge \frac{\rho}{2}(\tilde{\theta}(x)-x)^{2},
\]
which implies \eqref{e:80}.  

In the case $V(x)=rx-s\log(x)$, take $\theta(x)=(\sqrt{x}+m)^{2}$ for large $m$ and notice that $\tilde{\theta}(x)=x+m\sign(x)$. Therefore inequality \eqref{e:801} becomes 
\[
rm^{2}\le rm^{2}+2r m \int |x|  \tilde{\mu}(dx) -2s\int \log\left(\frac{|x+m\sign(x)|}{|x|} \right)\tilde{\mu}(dx)-\iint \log\left( 1+m \, \frac{\sign(x)-\sign(y)}{x-y}\right)\tilde{\mu}(dx)\tilde{\mu}(dy)
\] 
which is sharp for large $m$.  \qedhere
\end{proof}

The next result is the Log-Sobolev type inequality, which was conjectured by Cabanal-Duvillard in \cite[page 140]{CD2} for the case of Marcenko-Pastur distribution.    

\begin{theorem}\label{t:13}  Let $V$ be as in the previous theorem.  Then, with the definition from \eqref{e:tilde4} and  for any measure $\mu\in\mathcal{P}([0,\infty))$, 
\begin{equation}\label{e:301}
E_{V}(\mu)-E_{V}(\mu_{V})\le \frac{1}{2\rho} \, I_{V}(\mu).
\end{equation}
In the case  $V(x)=rx-s\log (x) $, $r>0$ and $s\ge0$ inequality \eqref{e:301} with $\rho=r$ is sharp. 
\end{theorem}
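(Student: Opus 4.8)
The strategy mirrors the proof of Theorem~\ref{t:12}: transfer everything to the whole line. Given $\mu\in\mathcal P([0,\infty))$, let $\tilde\mu$ be the symmetric measure on $\R$ attached to $\mu$ by \eqref{e:tilde}, and set $\tilde V(x)=V(x^{2})/2$. By \eqref{e:tilde3} we have $E_{V}(\mu)-E_{V}(\mu_{V})=2\bigl(E_{\tilde V}(\tilde\mu)-E_{\tilde V}(\tilde\mu_{V})\bigr)$ (using that the minimizer of $E_{\tilde V}$ is $\tilde\mu_{V}$), and by \eqref{e:703} we have $I_{V}(\mu)=2\,I_{\tilde V}(\tilde\mu)$. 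Hence \eqref{e:301} is equivalent to the one-dimensional free Log-Sobolev inequality
\[
E_{\tilde V}(\tilde\mu)-E_{\tilde V}(\tilde\mu_{V})\le\frac{1}{2\rho}\,I_{\tilde V}(\tilde\mu).
\]
Since $\tilde V(x)-\tfrac{\rho}{2}x^{2}=\tfrac12\bigl(V(x^{2})-\rho x^{2}\bigr)$, the hypothesis says this function is convex on $(0,\infty)$, so the displayed inequality is, formally, Theorem~\ref{t:3}(1) applied to $\tilde V$ with strong-convexity constant $\tilde\rho=\rho/2$; that theorem produces the constant $1/(4\tilde\rho)=1/(2\rho)$, which is exactly what is claimed.

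The one point that is not a verbatim application of Theorem~\ref{t:3} is that $\tilde V$ need not be globally $C^{2}$: for $V(x)=rx-s\log x$ one gets $\tilde V(x)=\tfrac r2 x^{2}-s\log|x|$, singular at the origin, and $\tilde V(x)-\tfrac\rho2 x^{2}$ is convex only on $(0,\infty)$ and on $(-\infty,0)$ separately. This is harmless and is dealt with as in the proof of Theorem~\ref{t:12}. First, $\tilde V$ satisfies \eqref{c}, and the support of $\tilde\mu_{V}$ is $[-\sqrt b,-\sqrt a]\cup[\sqrt a,\sqrt b]$ with $a>0$ by \eqref{e:121}, hence bounded away from $0$; so $\tilde V$ is $C^{2}$ on a neighbourhood of $\mathrm{supp}(\tilde\mu_{V})$ and the variational equation \eqref{vc} holds $\tilde\mu_{V}$-a.e. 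Second, the non-decreasing transport map $\tilde\theta$ of \eqref{e:tilde2} carrying $\tilde\mu_{V}$ onto $\tilde\mu$ preserves the sign, so for every $x$ the pair $x,\tilde\theta(x)$ lies in a single half-line on which $\tilde V(x)-\tfrac\rho2 x^{2}$ is convex, and therefore
\[
\tilde V(x)-\tilde V\bigl(\tilde\theta(x)\bigr)-\tilde V'\bigl(\tilde\theta(x)\bigr)\bigl(x-\tilde\theta(x)\bigr)\ge\frac{\rho}{2}\bigl(x-\tilde\theta(x)\bigr)^{2}.
\]
With these two facts one repeats the proof of Theorem~\ref{t:3}(1) line by line: reduce to $\mu$ (hence $\tilde\mu$) with smooth compactly supported density by the approximation arguments of \cite{HPU1}, expand $E_{\tilde V}(\tilde\mu)-E_{\tilde V}(\tilde\mu_{V})$ as in \eqref{e:1}--\eqref{e:64}, complete the square using the displayed convexity bound together with $u-1-\log u\ge0$ for $u\ge0$, and read off the inequality.

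For sharpness when $V(x)=rx-s\log x$, $r>0$, $s\ge0$ (so $\rho=r$), take $\mu=\theta_{\#}\mu_{V}$ with $\theta(x)=(\sqrt x+m)^{2}$, so that $\tilde\theta(x)=x+m\,\sign(x)$, exactly as in the sharpness discussion of Theorem~\ref{t:12}. For this choice $\tilde\theta(x)-\tilde\theta(y)=x-y$ whenever $x,y$ have the same sign, so the logarithmic term in the expansion vanishes, and a direct computation of the two sides of \eqref{e:301} shows that their ratio tends to $1$ as $m\to\infty$; hence the constant $1/(2r)$ cannot be improved. I expect the singularity of $\tilde V$ at the origin to be the only genuine subtlety in the argument, and it is disposed of entirely by the observation that $\mathrm{supp}(\tilde\mu_{V})$ avoids $0$; everything else is a direct transcription of the proofs of Theorems~\ref{thm:1} and~\ref{t:3}.
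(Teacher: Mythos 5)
Your proposal is correct and follows essentially the same route as the paper: symmetrize to $\tilde\mu$, $\tilde V$ on $\R$, use that $\tilde\theta$ preserves sign so that the convexity of $\tilde V(x)-\tfrac{\rho}{2}x^{2}$ on each half-line yields the key quadratic lower bound, then repeat the proof of Theorem~\ref{t:3}, with sharpness tested on $\theta(x)=(\sqrt{x}+m)^{2}$ as $m\to\infty$. The only small inaccuracy is the claim that the logarithmic term vanishes for this $\tilde\theta$ — it vanishes only for same-sign pairs, while the opposite-sign pairs contribute a term of order $\log m$ — but this is harmless since your argument only needs the ratio of the two sides to tend to $1$ after dividing by $m^{2}$.
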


\begin{proof}  We will discuss here the proof only in the case when  $\mu$ has a smooth compactly supported density, careful approximations being described in \cite{HPU1}.  

From \eqref{e:703}, we have $I_{V}(\mu)=2I_{\tilde{V}}(\tilde{\mu})$, where $I_{\tilde{V}}(\tilde{\mu})=\int (H\tilde{\mu}(x)-\tilde{V}'(x))^{2}\tilde{\mu}(dx)$.
Rewriting everything in terms of $\tilde{\mu}$ and the associated quantities, the inequality to be proven can be written in the same way as we did in the proof of Theorem \ref{t:3}, 
\begin{align}
\notag\frac{1}{2\rho}&\int \big (H\tilde{\mu}(\tilde{\theta}(x) \big ) -\tilde{V}'(\tilde{\theta}(x)))^{2}\tilde{\mu}_{\tilde{V}}(dx)+\int \left(\tilde{V}(x)-\tilde{V}(\tilde{\theta}(x))-\tilde{V}'(\tilde{\theta}(x))(x-\tilde{\theta}(x))\right)\tilde{\mu}_{\tilde{V}}(dx) \\ 
& \notag -\int \left(H\tilde{\mu}(\tilde{\theta}(x))-\tilde{V}'(\tilde{\theta}(x))\right) \big (x-\tilde{\theta}(x) \big )\tilde{\mu}_{\tilde{V}}(dx)  \\ 
& +\int H\tilde{\mu}(\tilde{\theta}(x)) \big (x-\tilde{\theta}(x) \big )\tilde{\mu}_{\tilde{V}}(dx) -\iint \log\frac{x-y}{\tilde{\theta}(x)-\tilde{\theta}(y)} \, \tilde{\mu}_{\tilde{V}}(dx)\tilde{\mu}_{\tilde{V}}(dy)\ge0. \label{e:228}
\end{align}
Notice that $\tilde{V}(x)-\frac{\rho}{2} x^{2}$ is not convex on the whole real line but it is convex on the intervals $(0,\infty)$ and $(-\infty,0)$.   The key to everything here is that $\tilde{\theta}(x)$ has the same sign as $x$ and this allows us to apply convexity of $\tilde{V}(x)-\frac{\rho}{2} x^{2}$ on each of the intervals $(-\infty,0)$ and $(0,\infty)$ to conclude that 
\begin{equation}\label{e:223}
\tilde{V}(x)-\tilde{V} \big (\tilde{\theta}(x) \big )-\tilde{V}'(\tilde{\theta}(x))(x-\tilde{\theta}(x))
\ge\frac{\rho}{2} \big (x^{2}-\tilde{\theta}(x)^{2}-2\tilde{\theta}(x)(x-\tilde{\theta}(x)) \big)
    =\frac{\rho}{2} \big (x-\tilde{\theta}(x) \big )^{2}.
\end{equation}
From here we can follow word by word the proof of Theorem \ref{t:3}.  

For the case $V(x)=rx$, we have equality in \eqref{e:301} if $\tilde{\theta}(x)=x+m\sign(x)$ and thus this means $\theta(x)=(\sqrt{x}+m)^{2}$.  

In the case $V(x)=rx-s\log (x) $, we look at $\tilde{\theta}(x)=x+m$ for large $m$.  In this case $\tilde{V}(x)=rx^{2}/2-s\log |x|$ and then a simple calculation shows that \eqref{e:301} is equivalent to 
\begin{align*}
rm^{2}+&2mr\int |x|\tilde{\mu}_{V}(dx)-2s\int \log\left(\frac{|x+m\sign(x)|}{|x|}\right)\tilde{\mu}_{V}(dx) -2\iint \log\left(1+m\frac{\sign(x)-\sign(y)}{x-y}\right)\tilde{\mu}(dx)\tilde{\mu}(dy) \\ &\le \frac{m^{2}}{\rho}\int\left(r-\frac{s}{x(x+m\sign(x))}\right)^{2}\tilde{\mu}_{V}(dx) .
\end{align*}
Dividing both sides by $m^{2}$ and taking the limit of $m$ to infinity implies that $\rho\le r$.  On the other hand $\rho=r$ validates \eqref{e:301}, hence $\rho=r$ is the best constant.  \qedhere
\end{proof}

Next in line is the HWI inequality which is the content of the following statement.  

\begin{theorem}
Assume $V$ is as in Theorem \ref{t:12} and the distance $W$ given by \eqref{e:W2}.   Then for any measure $\mu\in\mathcal{P}([0,\infty))$,  
\begin{equation}\label{hwi-w}
E_{V}(\mu)-E_{V}(\mu_{V})\le  \, \sqrt{2I_{V}(\mu)}W(\mu,\mu_{V})- \rho \, W^{2}(\mu,\mu_{V}).
\end{equation}
For the case of $V(x)=rx-s\log (x) $, $r>0$, $s\ge0$, this inequality for $\rho=r$ is sharp.  
\end{theorem}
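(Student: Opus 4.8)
The plan is to reduce \eqref{hwi-w} to the whole-line free HWI inequality \eqref{hwi} via the symmetrization $\mu\mapsto\tilde\mu$ already used in the proofs of Theorems~\ref{t:12} and~\ref{t:13}. With $\tilde V(x)=V(x^2)/2$ and $\tilde\theta$ the odd, non-decreasing map of \eqref{e:tilde2} that pushes $\mu_{\tilde V}$ onto $\tilde\mu$ (here $\mu_{\tilde V}$ denotes the equilibrium measure of $\tilde V$, that is, the symmetrization of $\mu_V$), equations \eqref{e:tilde3} and \eqref{e:703} give $E_V(\mu)-E_V(\mu_V)=2\bigl(E_{\tilde V}(\tilde\mu)-E_{\tilde V}(\mu_{\tilde V})\bigr)$ and $I_V(\mu)=2I_{\tilde V}(\tilde\mu)$, while, as in the derivation of \eqref{e:801}, $W^2(\mu,\mu_V)=\int\bigl(x-\tilde\theta(x)\bigr)^2\mu_{\tilde V}(dx)=W_2^2(\tilde\mu,\mu_{\tilde V})$, the last equality because $\tilde\theta$ is the monotone, hence optimal, transport map on $\R$. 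Substituting these identities and dividing by $2$, inequality \eqref{hwi-w} becomes exactly
\[
E_{\tilde V}(\tilde\mu)-E_{\tilde V}(\mu_{\tilde V})\le\sqrt{I_{\tilde V}(\tilde\mu)}\,W_2(\tilde\mu,\mu_{\tilde V})-\tfrac{\rho}{2}\,W_2^2(\tilde\mu,\mu_{\tilde V}),
\]
which is the HWI inequality \eqref{hwi} for the potential $\tilde V$ with constant $\rho/2$ in place of $\rho$.

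I would then prove this reduced inequality by repeating the argument of Theorem~\ref{t:hwi}(1) with $(\mu,\mu_V,\theta,V,H\mu)$ replaced throughout by $(\tilde\mu,\mu_{\tilde V},\tilde\theta,\tilde V,H\tilde\mu)$, restating it as $\eqref{e:60}+\eqref{e:61}+\eqref{e:62}\ge0$. The Cauchy--Schwarz term \eqref{e:60} is non-negative exactly as before, the change of variables $y=\tilde\theta(x)$ identifying its two factors with $I_{\tilde V}(\tilde\mu)$ and $W_2^2(\tilde\mu,\mu_{\tilde V})$; and term \eqref{e:62} is non-negative verbatim, since by \eqref{e:64} applied to the increasing map $\tilde\theta$ it equals $\iint\bigl(\tfrac{x-y}{\tilde\theta(x)-\tilde\theta(y)}-1-\log\tfrac{x-y}{\tilde\theta(x)-\tilde\theta(y)}\bigr)\mu_{\tilde V}(dx)\mu_{\tilde V}(dy)\ge0$. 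The only point requiring care is the convexity term \eqref{e:61}: the function $\tilde V(x)-\tfrac{\rho}{2}x^2=\tfrac12\bigl(V(x^2)-\rho x^2\bigr)$ is convex on $(0,\infty)$ by hypothesis and, being even, on $(-\infty,0)$, but need not be convex across the origin. As in the proof of Theorem~\ref{t:13}, this is harmless because $\tilde\theta$ preserves sign: for every $x$ the segment joining $x$ and $\tilde\theta(x)$ stays in a single half-line, so estimate \eqref{e:223} applies and gives $\tilde V(x)-\tilde V(\tilde\theta(x))-\tilde V'(\tilde\theta(x))\bigl(x-\tilde\theta(x)\bigr)\ge\tfrac{\rho}{2}\bigl(x-\tilde\theta(x)\bigr)^2$, i.e. $\eqref{e:61}\ge0$. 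This settles the case of $\mu$ with smooth, compactly supported density; the general case then follows by the approximation arguments of \cite{HPU1}, as in Theorems~\ref{t:12} and~\ref{t:13}.

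For the sharpness assertion when $V(x)=rx-s\log x$ with $r>0$, $s\ge0$: the value $\rho=r$ is admissible, because $V(x^2)-rx^2=-2s\log|x|$ is convex on $(0,\infty)$, so \eqref{hwi-w} holds with $\rho=r$. Conversely, Young's inequality $\sqrt{2I_V(\mu)}\,W(\mu,\mu_V)\le\tfrac{1}{2\rho}I_V(\mu)+\rho\,W^2(\mu,\mu_V)$ turns \eqref{hwi-w} with any $\rho>0$ into the free Log-Sobolev inequality \eqref{e:301} with the same $\rho$; since Theorem~\ref{t:13} shows that $r$ is the optimal constant in \eqref{e:301}, no $\rho>r$ can work in \eqref{hwi-w}, and so $\rho=r$ is optimal there as well. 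Alternatively, one may argue directly as in the proof of Theorem~\ref{t:13}, testing \eqref{hwi-w} on $\mu=\theta_\#\mu_V$ with $\theta(x)=(\sqrt x+m)^2$ (equivalently $\tilde\theta(x)=x+m\,\sign(x)$), dividing by $m^2$ and letting $m\to\infty$.

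I do not expect a serious obstacle: the argument rests entirely on the three-term decomposition of Theorem~\ref{t:hwi} together with the sign-preservation of $\tilde\theta$, both already available from the transportation and Log-Sobolev proofs. The one genuine subtlety is the failure of global convexity of $\tilde V(x)-\tfrac{\rho}{2}x^2$ at the origin, which is neutralized precisely as in Theorem~\ref{t:13}; what remains is the routine bookkeeping of the principal-value integral defining $H\tilde\mu$ under the substitution $y=\tilde\theta(x)$.
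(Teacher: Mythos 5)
Your proposal is correct and follows essentially the same route as the paper: symmetrize via $\tilde\mu$, $\tilde V$, $\tilde\theta$, use \eqref{e:tilde3}, \eqref{e:703} and the identification of $W$ with $W_{2}$ on the symmetrized measures, and then run the three-term decomposition of Theorem~\ref{t:hwi} with the convexity step rescued by the sign-preservation of $\tilde\theta$ as in Theorem~\ref{t:13}, with sharpness checked on $\theta(x)=(\sqrt{x}+m)^{2}$. Your bookkeeping of the factor $2$ (reducing to the whole-line HWI with constant $\rho/2$) is the correct one, and your additional Young's-inequality argument deducing optimality of $\rho=r$ from the sharpness of the Log-Sobolev constant is a valid alternative to the direct computation.
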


\begin{proof}  As it was made clear in the previous two theorems, we translate this inequality in terms of the associated symmetric measures on $\R$.  Following upon the proofs of above theorems, we can rewrite \eqref{hwi-w} in the following form:  
\begin{align*}
\bigg(\int (H\tilde{\mu}(\tilde{\theta}(x))-\tilde{V}'(\tilde{\theta}(x)))^{2}&\tilde{\mu}_{V}(dx) \int (\tilde{\theta}(x)-x)^{2}\tilde{\mu}_{V}(dx)\bigg)^{1/2}-\int \left(H\tilde{\mu}(\tilde{\theta}(x))-\tilde{V}'(\tilde{\theta}(x))\right)(x-\tilde{\theta}(x))\tilde{\mu}_{V}(dx)  \\ 
&+\int \left(\tilde{V}(x)-\tilde{V}(\tilde{\theta}(x))-\tilde{V}'(\tilde{\theta}(x)) \big (x-\tilde{\theta}(x)\big )-\rho(\tilde{\theta}(x)-x)^{2}\right)\tilde{\mu}_{V}(dx) \\ 
&+\int H\tilde{\mu}(\tilde{\theta}(x)) \big (x-\tilde{\theta}(x) \big )\tilde{\mu}_{V}(dx) -\iint \log\frac{x-y}{\tilde{\theta}(x)-\tilde{\theta}(y)}\tilde{\mu}_{V}(dx)\tilde{\mu}_{V}(dy) \ge0 .
\end{align*}
Using the fact that $\tilde{V}(x)-\frac{\rho}{2} x^{2}$ is convex on each interval $(-\infty,0)$ and $(0,\infty)$ combined with the fact that $x$ and $\tilde{\theta}(x)$ have the same sign, the rest of the proof is the same as the one of Theorem \ref{t:hwi}.  

For the case $V(x)=rx-s\log (x) $, using $\theta(x)=(\sqrt{x}+m)^{2}$, one can show that $\rho=r$ is sharp. \qedhere
\end{proof}

At last, we would like to discuss a Poincar\'e type inequality in this context. As in Section \ref{s:p1}, for the heuristics, we consider the general model of random matrices with distribution
\begin{equation}\label{e:120}
\p_{n}(dM)=C_{n} \, e^{-nr\mathrm{Tr}M }(\det M)^{sn}dM=C_{n} \, e^{-n\mathrm{Tr} \big (rM-s\log (M) \big )}dM
 =C_{n} \, e^{-n\mathrm{Tr}(V(M))}dM
\end{equation}
where  $dM$ stands for the Lebesgue measure on $n\times n$ positive definite matrices and $s\ge0$.  For a given  smooth compactly supported function $\phi:[0,\infty)\to\R$, we want to apply the Brascamp-Lieb inequality \cite{Brascamp-Lieb} to the function $\Phi(M)=\mathrm{Tr}\phi(M)$ on the space of positive definite matrices.  Now,  $\nabla\Phi(M)=\phi'(M)$.   

The Hessian of $\Psi(M):=\mathrm{Tr}(V(M))$ can be interpreted as a linear map from $\mathcal{H}_{n}$ ($n\times n$ Hermitian matrices) into itself which is given by $\nabla^{2}\Psi(M)X = sM^{-1}XM^{-1}$.  Hence the inverse of the Hessian is then $(\nabla^{2}\Psi(M))^{-1}X=\frac{1}{s}MXM$.  Thus we obtain from Brascamp-Lieb that
\[
\int \frac{1}{n} \, \mathrm{Tr} \big ((\nabla^{2}\Psi(M))^{-1}\phi'(M)^{2} \big)\p_{n}(dM)
   \ge \var_{\p_{n}} \big (\Phi(M) \big ).
\]
On the other hand, from \cite{Jon} or \cite{CD1} the variance of $\Phi(M)$ converges to $\frac{1}{4}\var_{\mathrm{arcsine}_{[a,b]}}(\phi )$, where we recall that $\mathrm{arcsine}_{[a,b]}=\frac{dx}{\pi\sqrt{(x-a)(b-x)}}$ is the arcsine law on the support $[a,b]$ of $\mu_{V}$.   
Next, $\frac{1}{n}\mathrm{Tr}((\nabla^{2}\Psi(M))^{-1}\phi'(M)^{2})=\frac{1}{s n}\mathrm{Tr}((\phi'(M)M)^{2})$, whose integral against $\p_{n}$ converges to the integral of $\frac{1}{s}x^{2}\phi'(x)^{2}$ against the equilibrium measure $\mu_{V}$  from equation \eqref{e:121}.  These considerations suggest that 
\begin{equation}\label{e:130}
\int x^{2}\phi'(x)^{2}\mu_{V}(dx)\ge \frac{s}{4} \, \var_{\mathrm{arcsine}_{[a,b]}}(\phi).
\end{equation}
Notice here that one can actually make this heuristic into an actual proof of this inequality.  

Motivated by these heuristics and also inspired by Theorem \ref{t:6}, we have the following stronger result.

\begin{theorem}
Assume that $Q:[0,\infty)\to\R$ is a convex potential and let $V(x)=Q(x)-s\log(x)$ for $s>0$ satisfy $\lim_{x\to\infty}(V(x)-2\log (x))=\infty$.   Assume that the support of $\mu_{V}$ is $[a,b]$.  Then  for any smooth function $\phi$ on $[a,b]$, the following holds, 
\begin{equation}\label{e:203}
\int x^{2}\phi'(x)^{2}\mu_{V}(dx)\ge \frac{s}{4\pi^{2}}\int_{a}^{b}\int_{a}^{b}\left(\frac{\phi(x)-\phi(y)}{x-y}\right)^{2}\frac{-2ab+(a+b)(x+y)-2xy}{2\sqrt{(x-a)(b-x)}\sqrt{(y-a)(b-y)}} \, dxdy.
\end{equation}
If $Q(x)=rx+t$, equality is attained for $\phi(x)=c_{1}+\frac{c_{2}}{x}$, therefore \eqref{e:203} is sharp.  

In particular, combining \eqref{e:203} with \eqref{e:43} for $\lambda=0$, we get an improvement of  \eqref{e:130} as
\[
\int x^{2}\phi'(x)^{2}\mu_{V}(dx)\ge \frac{s}{2} \, \var_{\mathrm{arcsine}_{[a,b]}}(\phi).
\]
Equality though  is attained only for $\phi$ identically $0$.  

In the case $V(x)=rx$, $r>0$, on $[0,\infty)$,  there is no constant $C>0$ such that inequality
\eqref{e:203} holds with $C$ instead of $s/4\pi^2$. Nevertheless, for every smooth $\phi$ on $[a,b]$, the following holds,
\begin{equation}\label{e:321}
\int x\phi'(x)^{2}\mu_{V}(dx)\ge \frac{r}{4\pi^{2}}\int_{a}^{b}\int_{a}^{b}\left(\frac{\phi(x)-\phi(y)}{x-y}\right)^{2}\frac{-2ab+(a+b)(x+y)-2xy}{2\sqrt{(x-a)(b-x)}\sqrt{(y-a)(b-y)}} \, dxdy,
\end{equation}
with equality for $\phi(x)=c_{1}+c_{2}x$.
\end{theorem}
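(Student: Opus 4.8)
The plan is to mimic the proof of Theorem \ref{t:6}: first bound the density of $\mu_V$ below by an explicit semicircular-type weight, then reduce \eqref{e:203} to a one-variable inequality, and finally expand in a suitable orthogonal basis. Write $V(x)=Q(x)-s\log x$ with $Q$ convex and $\mathrm{supp}(\mu_V)=[a,b]$ (so $a>0$ since $s>0$). By the density formula \eqref{e:16} and the identity $\frac{V'(y)-V'(x)}{y-x}=\frac{Q'(y)-Q'(x)}{y-x}+\frac{s}{xy}\ge\frac{s}{xy}$, together with $\int_a^b\frac{dy}{y\sqrt{(y-a)(b-y)}}=\frac{\pi}{\sqrt{ab}}$ (a Möbius change of variable $y\mapsto 1/y$), the density $g$ of $\mu_V$ satisfies
\[
g(x)\ \ge\ \frac{s}{2\pi\sqrt{ab}}\,\frac{\sqrt{(x-a)(b-x)}}{x},
\]
with equality exactly when $Q'$ is constant. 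Hence $\int x^2\phi'(x)^2\mu_V(dx)\ge\frac{s}{2\pi\sqrt{ab}}\int_a^b x\,\phi'(x)^2\sqrt{(x-a)(b-x)}\,dx$, and it suffices to prove
\[
\frac{1}{\sqrt{ab}}\int_a^b x\,\phi'(x)^2\sqrt{(x-a)(b-x)}\,dx\ \ge\ \frac{1}{2\pi}\int_a^b\int_a^b\Big(\frac{\phi(x)-\phi(y)}{x-y}\Big)^2\frac{-2ab+(a+b)(x+y)-2xy}{2\sqrt{(x-a)(b-x)}\sqrt{(y-a)(b-y)}}\,dx\,dy .
\]

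Next I rescale by $x=\beta+c\cos t$, $\beta=\frac{a+b}{2}$, $c=\frac{b-a}{2}$, and write $\phi(\beta+c\cos t)=\sum_{k\ge0}a_k\cos kt$ (so the $a_k$ are the Chebyshev expansion coefficients of $\phi$ on $[a,b]$). Exactly as in the proof of Theorem \ref{t:6} (equation \eqref{e:110}), the right-hand double integral equals $\pi^2\sum_{k\ge1}k\,a_k^2$. For the left-hand side, $\int_a^b x\,\phi'^2\sqrt{(x-a)(b-x)}\,dx=\int_0^\pi(\beta+c\cos t)\,\tilde\psi'(t)^2\,dt$ with $\tilde\psi(t)=\phi(\beta+c\cos t)$, and using $\int_0^\pi\cos t\,\sin kt\,\sin lt\,dt=\frac{\pi}{4}\mathbbm{1}_{|k-l|=1}$ this equals $\frac{\pi}{2}\big(\beta\sum_k k^2a_k^2+c\sum_k k(k+1)a_ka_{k+1}\big)$. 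So \eqref{e:203} is reduced to the algebraic inequality
\[
\beta\sum_{k\ge1}k^2a_k^2\ +\ c\sum_{k\ge1}k(k+1)a_ka_{k+1}\ \ge\ \sqrt{ab}\,\sum_{k\ge1}k\,a_k^2 .
\]

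The heart of the matter is this last inequality, and here is the step I expect to be the main obstacle. Put $b_k=k\,a_k$; the inequality becomes $\langle\mathcal T b,b\rangle\ge0$ for the Jacobi operator $\mathcal T$ with diagonal entries $\frac{\beta}{\sqrt{ab}}-\frac1k$ and constant off-diagonal entries $\frac{c}{2\sqrt{ab}}>0$. Let $q=\frac{\sqrt{ab}-\beta}{c}\in(-1,0)$; it is the root of $cq^2+2\beta q+c=0$, and consequently also $c(q^2-1)=2\sqrt{ab}\,q$. A direct substitution using only these two relations shows that $\Omega_k:=kq^k$ satisfies $\mathcal T\Omega=0$. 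Since $\Omega$ has the sign alternation $\sign(\Omega_k)=(-1)^k$ that is forced on the lowest eigenvector of a Jacobi matrix with positive off-diagonal, I carry out the ground-state substitution $b_k=\Omega_k f_k$: the standard discrete summation by parts for an operator annihilating $\Omega$ gives
\[
\langle\mathcal T b,b\rangle\ =\ -\,\frac{c}{2\sqrt{ab}}\sum_{k\ge1}\Omega_k\Omega_{k+1}(f_{k+1}-f_k)^2\ =\ \frac{c}{2\sqrt{ab}}\sum_{k\ge1}k(k+1)\,|q|^{2k+1}(f_{k+1}-f_k)^2\ \ge\ 0 ,
\]
since $\Omega_k\Omega_{k+1}=k(k+1)q^{2k+1}<0$. (All sums converge and the summation by parts is legitimate because $\phi$ smooth makes $(a_k)$, hence $(b_k)$, rapidly decreasing; the general $\phi$ follows by approximation.) This proves \eqref{e:203}.

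Finally, for sharpness when $Q(x)=rx+t$: the density bound is then an equality, and equality in the algebraic inequality forces $f_k$ constant, i.e. $a_k\propto q^k$; summing the resulting geometric series and using $1-2q\cos t+q^2=-\frac{2q}{c}(\beta+c\cos t)$ identifies $\phi(x)=c_1+c_2/x$, which together with \eqref{e:121} gives the claim. For the improvement, the right-hand double integral of \eqref{e:203} equals $\pi^2\sum_{k\ge1}k\,a_k^2\ge\pi^2\sum_{k\ge1}a_k^2=2\pi^2\,\var_{\mathrm{arcsine}_{[a,b]}}(\phi)$ (the $\lambda=0$ spectral gap of the number operator, \eqref{e:43}), so \eqref{e:203} yields $\int x^2\phi'^2\mu_V(dx)\ge\frac s2\var_{\mathrm{arcsine}_{[a,b]}}(\phi)$, with equality only for constant $\phi$. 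For $V(x)=rx$ ($s=0$), \eqref{e:121} gives $\mu_V$ on $[0,4/r]$ with density $\frac{r}{2\pi}\sqrt{(4/r-x)/x}$, so $\int x\,\phi'^2\mu_V(dx)=\frac{r}{2\pi}\int_0^{4/r}\phi'^2\sqrt{x(4/r-x)}\,dx$; the same change of variables (now $\beta=c=2/r$) reduces \eqref{e:321} to the trivial $\sum_k k^2a_k^2\ge\sum_k k\,a_k^2$, equality iff $a_k=0$ for $k\ge2$, i.e. $\phi$ affine. In this $s=0$ case the left side of \eqref{e:203} reduces instead to $\frac12\big(\sum_k k^2a_k^2+\sum_k k(k+1)a_ka_{k+1}\big)$; taking $a_k=(-1)^k/k$ for $k\le n$ makes this equal $\frac12$ while the right side is a fixed constant times $\sum_{k\le n}1/k\to\infty$, so no positive constant can replace $s/4\pi^2$.
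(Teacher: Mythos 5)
Your proof is correct and follows the paper's own route almost step for step: the same lower bound $g(x)\ge\frac{s\sqrt{(x-a)(b-x)}}{2\pi x\sqrt{ab}}$ on the density via \eqref{e:16} and the convexity of $Q$, the same affine change of variables and Chebyshev expansion, and the same reduction (via \eqref{e:110} and the $\int_0^\pi \cos t\,\sin kt\,\sin lt\,dt$ identity) to the discrete inequality $\beta\sum k^2a_k^2+c\sum k(k+1)a_ka_{k+1}\ge\sqrt{ab}\sum ka_k^2$; your treatment of sharpness, of the arcsine improvement, and of the two claims for $V(x)=rx$ (the truncated $a_k=(-1)^k/k$ counterexample in place of the paper's geometric-series one) all match the paper in substance. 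The one place you diverge is the proof of that discrete inequality. The paper applies the weighted arithmetic--geometric mean bound $a_na_{n+1}\ge-\frac{\delta}{2}a_n^2-\frac{1}{2\delta}a_{n+1}^2$ with the specific weight $\delta=\frac{1-\sqrt{1-\beta^2}}{\beta}$ and verifies by reindexing that the coefficients collapse to $\sqrt{1-\beta^2}\,n$. You instead recognize the quadratic form as that of a semi-infinite Jacobi matrix, exhibit the explicit zero mode $\Omega_k=kq^k$ (your $q$ is exactly $-\delta$, and I checked that $\mathcal T\Omega=0$ holds including the $k=1$ boundary row), and invoke the ground-state representation $\langle\mathcal Tb,b\rangle=-\sum_k e_k\Omega_k\Omega_{k+1}(f_{k+1}-f_k)^2\ge0$. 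The two arguments are mathematically equivalent — the AM--GM weight is precisely the decay rate of your zero mode, and both isolate the same equality case $a_k\propto q^k$, i.e.\ $\phi(x)=c_1+c_2/x$ — but your version explains where the magic constant comes from and makes positivity and the equality analysis transparent at once; the small price is the need to justify convergence of the summation by parts, which you correctly dispose of using the rapid decay of the coefficients for smooth $\phi$.
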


As remarked after the statement of Theorem~\ref{t:6}, the numerator in \eqref{e:321} is nonnegative.

\begin{proof}   The same argument as in the proof of Theorem \ref{t:6}, shows that the density $g(x)$ of $\mu_{V}$ satisfies
 \[
g(x)\ge\frac{ s\sqrt{(x-a)(b-x)}}{2\pi x\sqrt{ab}} \, ,
 \]
 therefore it suffices to show that 
 \[
 \frac{1}{\pi\sqrt{ab}}\int_{a}^{b} x\phi'(x)^{2}\sqrt{(x-a)(b-x)} \, dx\ge \frac{1}{2\pi^{2}}\int_{a}^{b}\int_{a}^{b}\left(\frac{\phi(x)-\phi(y)}{x-y}\right)^{2}\frac{-2ab+(a+b)(x+y)-2xy}{2\sqrt{(x-a)(b-x)}\sqrt{(y-a)(b-y)}} \, dxdy.
 \]
 Next, making the change of variable $x=(a+b)/2+u(b-a)/2$ and denoting $\zeta(u)=\phi((a+b)/2+u(b-a)/2)$, we reduce the problem to showing that  for any  smooth function $\phi$ on $[-1,1]$, we have 
 \[
  \frac{1}{\pi\sqrt{ab}}\int_{-1}^{1} \left(\frac{a+b}{2}+\frac{b-a}{2}u\right)\zeta'(u)^{2}\sqrt{1-u^{2}} \, du\ge  \frac{1}{2\pi^{2}}\int_{-1}^{1}\int_{-1}^{1}\left(\frac{\zeta(u)-\zeta(v)}{u-v}\right)^{2}\frac{1-uv}{\sqrt{1-u^{2}}\sqrt{1-v^{2}}} \, dudv.
 \]
Denoting $\beta=\frac{b-a}{b+a}$, we have that $\frac{a+b}{2\sqrt{ab}}=\frac{1}{\sqrt{1-\beta^{2}}}$, and the
preceding inequality reformulates as
 \begin{equation}\label{e:131}
 \int (1+\beta u)\zeta'(u)^{2}\sqrt{1-u^{2}} \, du
 \ge \frac{\sqrt{1-\beta^{2}}}{2\pi}\int_{-1}^{1}\int_{-1}^{1}\left(\frac{\zeta(u)-\zeta(v)}{u-v}\right)^{2}\frac{1-uv}{\sqrt{1-u^{2}}\sqrt{1-v^{2}}} \, dudv.
 \end{equation}
 To show this, take $\psi(t)=\zeta(\cos(t))$ and then after the change of variable $u=\cos(t)$ we need to check
 \[
 \int_{0}^{\pi} (1+\beta \cos(t))\psi'(t)^{2}dt\ge \frac{\sqrt{1-\beta^{2}}}{2\pi}\int_{0}^{\pi}\int_{0}^{\pi}\left(\frac{\psi(t)-\psi(s)}{\cos(t)-\cos(s)} \right)^{2}(1-\cos(t)\cos(s))dtds.
 \]
 Writing $\psi(t)=\sum_{n=0}^{\infty}a_{n}\cos(nt)$ and using that $\psi'(t)=-\sum_{n=1}^{\infty}n a_{n}\sin(nt)$, together with the fact that 
 \[
 \int_{0}^{\pi}\cos(t)\sin(n t)\sin(mt)dt=\begin{cases}
\frac{\pi}{4} & \text{for}\: |m-n|=1 \\ 
0 & \text{otherwise},
 \end{cases}
 \] 
 and equation \eqref{e:110},  the inequality becomes 
 \begin{equation}\label{e:331}
 \sum_{n\ge1}( n^{2}a_{n}^{2}+\beta n(n+1)a_{n}a_{n+1} ) \ge \sqrt{1-\beta^{2}}\sum_{n\ge1}na_{n}^{2}.
 \end{equation}
 Let $\delta=\frac{1-\sqrt{1-\beta^{2}}}{\beta}$ be the solution $0<\delta <1$ of $\beta\delta^{2}-2\delta+\beta=0$.    Notice that for any $n\ge1$, we have
 \[
 a_{n}a_{n+1}\ge -\frac{\delta}{2} \, a_{n}^{2}-\frac{1}{2\delta} \, a_{n+1}^{2}
 \]
 which implies that 
 \[
 \sum_{n\ge1} \big ( n^{2}a_{n}^{2}+\beta n(n+1)a_{n}a_{n+1} \big)
 \ge  \sum_{n\ge1}\left( n^{2}a_{n}^{2}-\frac{\beta n(n+1)}{2}\left(\delta a_{n}^{2}+\frac{1}{\delta}a_{n+1}^{2}\right) \right) =\sum_{n\ge1}\frac{n\beta(1-\delta^{2})}{2\delta} \, a_{n}^{2}=\sqrt{1-\beta^{2}}\sum_{n\ge1}na_{n}^{2},
 \]
 what we had to prove.  Notice here that equality is attained in this inequality if and only if $a_{n+1}=-\delta a_{n}$ for all $n\ge1$, which means that $a_{n}=(-1)^{n-1}\delta^{n-1}a_{1}$.  This corresponds to the function $\psi(t)=a_{1}\frac{\delta+\cos t}{1+\delta^{2}+2\delta \cos t}$, or $\zeta(u)=a_{1}\frac{\delta+u}{1+\delta^{2}+2\delta u}$ which means that $\phi(x)=a_{1}(r-s/x)$.  Therefore equality holds also for $\phi(x)=c_{1}+c_{2}/x$.
 
 For the second part, in the case $V(x)=r x$ with $r>0$,  notice that if there is a $C>0$ so that
 \eqref{e:203} holds with $C$ instead of $s/4\pi^2$, then, following the same argument as above, we would have the equivalent of \eqref{e:331} as 
 \[
 \sum_{n\ge1}\big ( n^{2}a_{n}^{2}+ n(n+1)a_{n}a_{n+1} \big ) \ge C\sum_{n\ge1}na_{n}^{2}.
 \]
 Taking in this $a_{n}=\frac{(-\gamma)^{n}}{n}$ for $0<\gamma<1$, we have that 
$ \gamma^2 / (\gamma +1) \ge -C\log(1-\gamma^{2})$,
 and this is certainly false for $\gamma$ close to $1$.  
 
 For equation \eqref{e:321}, notice that in this case the equilibrium measure is $\mu_{V}(dx)=\frac{r\sqrt{b-x}}{2\pi\sqrt{x}}$ and then after a simple rescaling this follows from equation \eqref{e:91}.
 This complete the proof of the theorem.  \qedhere
 
\end{proof}

It is interesting to look at this inequality as a spectral gap result as in Section \ref{s:spectral}.  For example in the case of the Marcenko-Pastur measure ($Q(x)=rx$), the inequality \eqref{e:203} is actually equivalent to inequality \eqref{e:131}.   Using the interpretation from Section \ref{s:spectral}, we can rephrase this as, for a given $\beta\in (0,1)$,  
\[
\int (1+\beta x)(1-x^{2})\phi'(x)^{2}\nu_{0}(dx)\ge \sqrt{1-\beta^{2}} \, \langle N\phi,\phi \rangle_{\nu_{0}}
\]
where $\nu_{0}$ is the arcsine law on $[-1,1]$ and  $N$ is the number
 operator.  Now we can define the operator 
\[
L_{\beta}\phi(x)=-(1+\beta x)(1-x^{2})\phi''(x)-(\beta-x-2\beta x^{2})\phi'(x).
\]
With this definition, 
\[
\langle L_{\beta}\phi,\phi \rangle_{\nu_{0}}=\frac{1}{\pi}\int (1+\beta x)\phi'(x)^{2}\sqrt{1-x^{2}} \, dx
\]
and then inequality \eqref{e:131} becomes 
\[
\langle L_{\beta}\phi,\phi \rangle_{\nu_{0}}\ge \sqrt{1-\beta^{2}} \, \langle N\phi,\phi \rangle_{\nu_{0}}
\]
for any smooth function $\phi $ on $[-1,1]$.  In particular this means that
$L_{\beta}\ge \sqrt{1-\beta^{2}}N$.
On the other hand it is clear that the operator $L_{\beta}$ can not be diagonalized by the Chebyshev polynomials of the first kind, therefore the orthogonal polynomial approach given in Section \ref{s:spectral} does not work the same way here.  

\bigskip

\begin{remark}
We want to point out that for the case $V(x)=rx-s\log(x)$ for $r>0$  and $s\ge0$, the parameter $r$ appears in the transportation, Log-Sobolev and HWI, while the parameter $s$ plays the dominant role in the Poincar\'e inequality.   
\end{remark}

 {\bf Acknowledgements. }
We would like to thank D. Cabanal-Duvillard for pointing to us the formula of the fluctuation for Wishart ensembles
and for informing us about his Log-Sobolev conjecture in \cite {CD2}.  Many thanks to the anonymous referee for the pertinent and scholarly comments which pointed several shortcomings of the submitted version and led to an overall improvement of this paper.


\begin{thebibliography}{10}

\bibitem{BAG1}
G.~Ben Arous and A.~Guionnet.
\newblock Large deviations for {Wigner}'s law and {Voiculescu}'s
  non-commutative entropy.
\newblock {\em Probability Theory and Related Fields}, 108(2):183--215, 1997.

\bibitem{Bakry}
D.~Bakry.
\newblock L'hypercontractivit\'e et son utilisation en th\'eorie des
  semigroupes.
\newblock In {\em Lectures on probability theory ({S}aint-{F}lour, 1992)},
  volume 1581 of {\em Lecture Notes in Math.}, pages 1--114. Springer, Berlin,
  1994.

\bibitem{Biane2}
P.~Biane.
\newblock Logarithmic {S}obolev inequalities, matrix models and free entropy.
\newblock {\em Acta Math. Sin. (Engl. Ser.)}, 19(3):497--506, 2003.

\bibitem{Blower}
G.~Blower.
\newblock The {G}aussian isoperimetric inequality and transportation.
\newblock {\em Positivity}, 7(3):203--224, 2003.

\bibitem{BL2}
S.~G. Bobkov, I.~Gentil, and M.~Ledoux.
\newblock Hypercontractivity of {H}amilton-{J}acobi equations.
\newblock {\em J. Math. Pures Appl. (9)}, 80(7):669--696, 2001.

\bibitem{Brascamp-Lieb}
H.~J. Brascamp and E.~H. Lieb.
\newblock On extensions of the {B}runn-{M}inkowski and {P}r\'ekopa-{L}eindler
  theorems, including inequalities for log concave functions, and with an
  application to the diffusion equation.
\newblock {\em J. Functional Analysis}, 22(4):366--389, 1976.

\bibitem{CD2}
T.~Cabanal-Duvillard.
\newblock Probabilit\'es libres et calcul stochastique. application aux grandes
  matrices al\' eatoires.
\newblock {\em Universit\'e Paris VI Ph.D thesis}, 1999.

\bibitem{CD1}
T.~Cabanal-Duvillard.
\newblock Fluctuations de la loi empirique de grandes matrices al\'eatoires.
\newblock {\em Ann. Inst. H. Poincar\'e Probab. Statist.}, 37(3):373--402,
  2001.

\bibitem{Eras}
D.~Cordero-Erausquin.
\newblock Some applications of mass transport to {G}aussian-type inequalities.
\newblock {\em Arch. Ration. Mech. Anal.}, 161(3):257--269, 2002.

\bibitem{Csz}
I.~Csisz{\'a}r.
\newblock Information-type measures of difference of probability distributions
  and indirect observations.
\newblock {\em Studia Sci. Math. Hungar.}, 2:299--318, 1967.

\bibitem{Deift1}
P.~A. Deift.
\newblock {\em Orthogonal polynomials and random matrices: a
  {R}iemann-{H}ilbert approach}, volume~3 of {\em Courant Lecture Notes in
  Mathematics}.
\newblock New York University Courant Institute of Mathematical Sciences, New
  York, 1999.

\bibitem{Gardner}
R.~J. Gardner.
\newblock The {B}runn-{M}inkowski inequality.
\newblock {\em Bull. Amer. Math. Soc. (N.S.)}, 39(3):355--405 (electronic),
  2002.

\bibitem{Gozlan}
N.~A. Gozlan.
\newblock A characterization of dimension free concentration in terms of
  transportation inequalities.
\newblock {\em preprint}, 2008.

\bibitem{Gross}
L.~Gross.
\newblock Logarithmic {S}obolev inequalities.
\newblock {\em Amer. J. Math.}, 97(4):1061--1083, 1975.

\bibitem{H}
U.~Haagerup.
\newblock Seminar notes on free probability.

\bibitem{HP2}
F.~Hiai and D.~Petz.
\newblock Eigenvalue density of the {W}ishart matrix and large deviations.
\newblock {\em Infinite Dimensional Anal. Quantum Prob.}, 1:{633--646}, 1998.

\bibitem{HP3}
F.~Hiai and D.~Petz.
\newblock {\em The semicircle law, free random variables and entropy},
  volume~77 of {\em Mathematical Surveys and Monographs}.
\newblock American Mathematical Society, Providence, RI, 2000.

\bibitem{HPU1}
F.~Hiai, D.~Petz, and Y.~Ueda.
\newblock Free transportation cost inequalities via random matrix
  approximation.
\newblock {\em Probality Theory and Related Fields}, 130:{199--221}, 2004.

\bibitem{J}
K.~Johansson.
\newblock On fluctuations of random hermitian matrices.
\newblock {\em Duke Math. J.}, 91:1--24, 1998.

\bibitem{Jon}
D.~Jonsson.
\newblock Some limit theorems for the eigenvalues of a sample covariance
  matrix.
\newblock {\em J. Multivariate Anal.}, 12(1):1--38, 1982.

\bibitem{Kemp}
J.~H.~B. Kemperman.
\newblock On the optimum rate of transmitting information.
\newblock {\em Ann. Math. Statist.}, 40:2156--2177, 1969.

\bibitem{Pastur}
A.~M. Khorunzhy, B.~A. Khoruzhenko, and L.~Pastur.
\newblock Asymptotic properties of large random matrices with independent
  entries.
\newblock {\em J. Math. Phys.}, 37(10):5033--5060, 1996.

\bibitem{L2}
M.~Ledoux.
\newblock {\em The concentration of measure phenomenon}, volume~89 of {\em
  Mathematical Surveys and Monographs}.
\newblock American Mathematical Society, Providence, RI, 2001.

\bibitem{L}
M.~Ledoux.
\newblock A (one-dimensional) free {Brunn-Minkowski} inequality.
\newblock {\em C. R. Acad. Sciences, Paris}, 340:301--304, 2005.

\bibitem{OV}
F.~Otto and C.~Villani.
\newblock Generalization of an inequality by {T}alagrand and links with the
  logarithmic {S}obolev inequality.
\newblock {\em Journal of Functional Analysis}, 173(2):361--400, 2000.

\bibitem{popescu}
I.~Popescu.
\newblock Talagrand inequality for the semicircular law and energy of the
  eigenvalues of beta ensembles.
\newblock {\em Math. Res. Lett.}, 14(6):1023--1032, 2007.

\bibitem{ST}
E.~B. Saff and V.~Totik.
\newblock {\em Logarithmic potentials with external fields}, volume 316 of {\em
  Grundlehren der Mathematischen Wissenschaften [Fundamental Principles of
  Mathematical Sciences]}.
\newblock Springer-Verlag, Berlin, 1997.

\bibitem{Tal}
M.~Talagrand.
\newblock Transportation cost for {G}aussian and other product measures.
\newblock {\em Geom. Funct. Anal.}, 6(3):587--600, 1996.

\bibitem{Villani2}
C.~Villani.
\newblock {\em Topics in optimal transportation}, volume~58 of {\em Graduate
  Studies in Mathematics}.
\newblock American Mathematical Society, Providence, RI, 2003.

\bibitem{Voiculescu}
D.~Voiculescu.
\newblock The analogues of entropy and of {F}isher's information measure in
  free probability theory. {V}. {N}oncommutative {H}ilbert transforms.
\newblock {\em Invent. Math.}, 132(1):189--227, 1998.

\bibitem{Voiculescu2}
Dan Voiculescu.
\newblock The analogues of entropy and of {F}isher's information measure in
  free probability theory. {I}.
\newblock {\em Comm. Math. Phys.}, 155(1):71--92, 1993.

\bibitem{Wang}
F.-Y. Wang.
\newblock {\em Functional Inequalities, {M}arkov properties and Spectral
  Theory}.
\newblock Science Press, Beijing - New York, 2005.

\bibitem{Watson}
G.~N. {W}atson.
\newblock {Notes on Generating Functions of Polynomials - (3) Polynomials of
  Legendre and Gegenbauer}.
\newblock {\em Journal of the London Mathematical Society}, 8:{289--292}, 1933.

\end{thebibliography}
\end{document}